\documentclass[12pt]{amsart}
\numberwithin{equation}{section}
\usepackage{amsmath,amsthm,amsfonts,amscd,eucal,mathabx}

\usepackage{xypic}
\usepackage{graphicx}
\usepackage{amssymb}
\hfuzz12pt \vfuzz12pt


\def\cb{{\mathcal B}}
\def\cc{{\mathcal C}}

\def\ce{{\mathcal E}}

\def\ch{{\mathcal H}}

\def\cam{{\mathcal M}}
\def\cn{{\mathcal N}}

\def\cp{{\mathcal P}}

\def\cs{{\mathcal S}}

\def\cu{{\mathcal U}}


\def\ga{{\mathfrak A}}
\def\gb{{\mathfrak B}}

 \def\gpn{{\mathfrak n}}


\def\ba{{\mathbb A}}

\def\bc{{\mathbb C}}

\def\bn{{\mathbb N}}

\def\br{{\mathbb R}}
\def\bt{{\mathbb T}}

\def\bz{{\mathbb Z}}

\def\a{\alpha}
\def\b{\beta}
\def\g{\gamma}  
\def\d{\delta}  \def\D{\Delta}

\def\eps{\varepsilon}

\def\l{\lambda} 

\def\m{\mu}

\def\n{\nu}
\def\r{\rho}
\def\s{\sigma} 
\def\t{\tau}
\def\f{\varphi}  \def\F{\Phi}
\def\th{\theta} 
\def\om{\omega} \def\Om{\Omega}

\def\id{\hbox{id}}
\def\ker{{\rm Ker}}

\newtheorem{thm}{Theorem}[section]
\newtheorem{lem}[thm]{Lemma}

\newtheorem{cor}[thm]{Corollary}
\newtheorem{prop}[thm]{Proposition}
\newtheorem{rem}[thm]{Remark}
\newtheorem{defin}[thm]{Definition}

\theoremstyle{definition}
\newtheorem{examp}{Example}[section]

\def\aut{\mathop{\rm Aut}}
\def\ed{\mathop{\rm End}}

\def\ob{\mathop{\rm Obj}}

\newcommand{\ty}[1]{\mathop{\rm {#1}}}
\def\di{{\rm d}}

\def\ad{\mathop{\rm ad}}

\def\idd{{1}\!\!{\rm I}}

\DeclareMathAlphabet{\mathpzc}{OT1}{pzc}{m}{it}

\begin{document}
\title[skew-product dynamical systems]
{skew-product dynamical systems for crossed product $C^*$-algebras and their ergodic properties}
\author[S. Del Vecchio]{Simone Del Vecchio}
\address{Simone Del Vecchio\\
Institut f\"{u}r Theoretische Physik, ITP- Universit\"{a}t Leipzig, Germany}\email{{\tt
simone.del\textunderscore vecchio@physik.unilepzig.de}}
\author[F. Fidaleo]{Francesco Fidaleo}
\address{Francesco Fidaleo\\
Dipartimento di Matematica \\
Universit\`{a} di Roma Tor Vergata\\
Via della Ricerca Scientifica 1, Roma 00133, Italy} \email{{\tt
fidaleo@mat.uniroma2.it}}
\author[S. Rossi]{Stefano Rossi}
\address{Stefano Rossi\\
Dipartimento di Matematica \\
Universit\`{a} degli Studi di Bari Aldo Moro \\
Via Edoardo Orabona 4, Bari 70125, Italy} \email{{\tt
stefano.rossi@uniba.it}}

\keywords{Operator Algebras, Noncommutative Harmonic Analysis, Noncommutative Torus, Skew-Product, Ergodic Dynamical Systems, Unique Ergodicity}
\subjclass[2000]{37A55, 43A99, 46L30, 46L55, 46L65, 46L87}

\begin{abstract}

Starting from a discrete $C^*$-dynamical system $(\mathfrak{A}, \theta, \omega_o)$, we define and study most of the main ergodic properties of the crossed product $C^*$-dynamical system 
$(\mathfrak{A}\rtimes_\alpha\mathbb{Z}, \Phi_{\theta, u},\om_o\circ E)$, $E:\mathfrak{A}\rtimes_\alpha\mathbb{Z}\rightarrow\ga$ being the canonical conditional expectation of 
$\mathfrak{A}\rtimes_\alpha\mathbb{Z}$ onto $\mathfrak{A}$, provided $\a\in\aut(\ga)$ commute with the $*$-automorphism $\th$ up tu a unitary $u\in\ga$. Here, 
$\Phi_{\theta, u}\in\aut(\mathfrak{A}\rtimes_\alpha\mathbb{Z})$ can be considered as the fully noncommutative generalisation of the celebrated skew-product defined by H. Anzai for the product of two tori in the classical case.

\end{abstract}

\maketitle


\section{Introduction}

Classical ergodic theory is a well-established topic whose early motivations came undoubtedly from an attempt to
give a sound foundation to statistical mechanics. 
It is exactly in this spirit that the first ergodic theorems were proved.

Noncommutative  ergodic theory is a decidedly younger area of research, whose motivation is more mathematical in character. However, when it comes to producing truly new examples, the noncommutative theory turns out to be much harder to deal with. On the contrary, in the classical theory it is not too difficult to come up with suitably chosen classes of dynamical systems featuring quite a wide spectrum of phenomena.

The so-called Anzai skew-products are certainly a case in point.
First introduced in \cite{A}, and later dealt with in far more generality in \cite{Fu} and called there {\it processes on the torus}, these are also known simply as skew-products, and are homeomorphisms defined on a product space of the type $X_o\times\mathbb{T}$, where $X_o$ is a compact metrizable space and $\mathbb{T}$ the one-dimensional torus.
Corresponding to  any homeomorphism $\theta\in\textrm{Homeo}(X_o)$ and a continuous function
$f\in C(X_o;\mathbb{T})$, one can define a homeomorphism $\Phi_{\theta, f}$ on the product $X_o\times\mathbb{T}$ acting as $\Phi_{\theta, f}(x, z):=(\theta(x), f(x)z)$, $(x, z)\in X_o\times\mathbb{T}$.
If $\theta$ is assumed to be  uniquely ergodic, that is there exists only one probability
regular Borel measure ({\it i.e.} a Radon probability measure) $\mu_o$ on $X_o$ which is invariant under $\theta$, then the properties of $\Phi_{\theta, f}$ can be analised
thoroughly.  The behaviour of the skew-product $(X_o\times\mathbb{T}, \Phi_{\theta, f})$ is in fact ruled
by the so-called cohomological equations $g(\theta(x))f^n(x)=g(x)$, $n\in\mathbb{Z}\smallsetminus\{0\}$, in the unknown function
$g$. More precisely,
the system will be ergodic with respect to the product measure $\mu:=\mu_o\times\l$, where $\l$ is
the Lebesgue-Haar measure on the torus, if and only if, for any integer $n$ different
from zero, the above equations only have null solutions. 
Moreover, in this case the product measure is the only
invariant measure of the skew-product. Not having continuous solutions corresponds to what is usually referred to as
topological ergodicity, namely that the constant functions are the only continuous functions.
Anzai skew-product dynamical systems were then intensively studied for several applications to many branches of mathematics, and were used to produce examples of dynamical systems enjoying very special ergodic and spectral properties. 

It is then natural to address the systematic investigation of the skew-product construction in a totally noncommutative setting. The first noncommutative version of the skew-product was defined in \cite{OP} in order to invesitigate  the so-called Rokhlin property for the seminal example of the noncommutative 2-torus, whereas the main ergodic properties for such an example were recently investigated in \cite{DFGR}.

The primary aim of this paper is to extend the study initiated in \cite{Fu} for the classical case and in \cite{DFGR} for the noncommutative torus, to the general situation relative to crossed products. In fact, the key-point is that the noncommutative torus can also be obtained as a crossed product
of $C(\mathbb{T})$ by the discrete group $\mathbb{Z}$ acting on the one-dimensional torus through a rotation
by $2\pi\alpha$. No wonder, crossed products by $\mathbb{Z}$ provide a natural framework where the sought
generalization can be set. More precisely, the data we start from is a triple $(\mathfrak{A}, \theta, \alpha)$, with $\ga$ is a unital $C^*$-algebra and $\theta, \alpha\in\aut(\ga)$ are $*$-automorphisms of $\ga$.
In order to investigate the ergodic properties of the arising skew-product systems, we suppose that 
$(\mathfrak{A}, \theta)$ is a uniquely ergodic $C^*$-dynamical system whose unique invariant state is $\omega_o$. In order to construct the skew-product dynamical system on the 
crossed product $C^*$-algebra, we need that the $*$-automorphism
 $\alpha$ commutes with $\theta$ up to a unitary $u\in\mathfrak{A}$.
 
The output is the crossed product $\mathfrak{A}\rtimes_\alpha\mathbb{Z}$ acted upon by
$\Phi_{\theta, u}\in{\rm Aut}(\mathfrak{A}\rtimes_\alpha\mathbb{Z})$, which is given by
$\Phi_{\theta, u}(a):=\theta(a)$, $a\in\mathfrak{A}$, and $\Phi_{\theta, u}(V)=uV$, where $V\in\mathfrak{A}\rtimes_\alpha\mathbb{Z}$ is the unitary implementation
of $\alpha$. Not only does $\Phi_{\theta, u}$ extend $\theta$ but it is also the more general extension that preserves the subspaces $\mathfrak{A}V^n\subset\mathfrak{A}\rtimes_\alpha\mathbb{Z}$ for
every integer $n$. Ultimately, this is the felicitous circumstance that opens the door to Fourier analysis when studying the ergodic properties of $\Phi_{\theta, u}$.

For a $C^*$-dynamical system $(\ga,\a,\f)$, we denote by $(\pi_\f,\ch_\f,V_{\f,\a},\xi_\f)$ be the covariant GNS representation corresponding to the invariant state $\f$. The ergodicity of the extended automorphisms on the crossed product $C^*$-algebras $\mathfrak{A}\rtimes_\alpha\mathbb{Z}$, with respect to its distinguished invariant states 
$\omega:=\omega_o\circ E$, $E$ being the canonical
expectation of $\mathfrak{A}\rtimes_\alpha\mathbb{Z}$ onto $\mathfrak{A}$, can still be analysed
by means of cohomological equations in the GNS Hilbert space $\ch_{\om_o}$, each of which corresponds to one of the above subspaces. This is established in Proposition \ref{toerph} and Theorem \ref{mier}.

Let the unitaries $u_n\in\ga$ be such that $\Phi_{\theta, u}(V^n)=u_nV^n$. First of all we show ({\it cf.} Proposition \ref{toerp}) that 
the skew-product $C^*$-dynamical system $(\mathfrak{A}\rtimes_\alpha\mathbb{Z},\Phi_{\theta, u})$ is topologically ergodic ({\it i.e.} 
$(\mathfrak{A}\rtimes_\alpha\mathbb{Z})^{\Phi_{\theta, u}}=\bc I$) if and only if the cohomological equations
\begin{equation}
\label{i}
\a^{-n}(u_n)\th(a)=a
\end{equation} 
have no nontrivial solutions $a\in\ga$ for each $n\neq0$.

When the support in the bidual $\ga^{**}$ of $\om_o$ is central, the ergodicity (and then the unique ergodicity) of the the state $\om$ can be established in terms of the absence of nontrivial solutions of cohomological equations directly for elements in $\pi_{\om_o}(\ga)''$, see Theorem \ref{mier} (iv). In fact, the skew-product dynamical system is uniquely ergodic if and only if the equations
\begin{equation}
\label{i0}
\pi_{\om_o}(\a^{-n}(u_n))\ad{}\!_{V_{\om_o,\th}}(a)=a\,,
\end{equation} 
have no nontrivial solutions when $n\neq0$, directly for $a\in\pi_{\om_o}(\ga)''$. Indeed, under the assumption that $\omega_o$ has central support, the equations we obtain are
of the form \eqref{i0}, where the unknown is now a bounded operator
$a$ (indeed a multiple of a single unitary) that sits in the von Neuman algebra $\pi_{\omega_0}(\mathfrak{A})''$. However, as an effect of $\mathfrak{A}$ no longer being  necessarily commutative, the celebrated Murray-von Neumann BT Theorem ({\it cf.} \cite{MvN}, Lemma 9.2.1, see also \cite{FZ}, Theorem 4.1) cannot be used to pass from
$L^2(\pi_{\om_o}(\ga)'')=\ch_{\om_o}$ to $L^\infty(\pi_{\om_o}(\ga)'')=\pi_{\om_o}(\ga)''$, and therefore
we need to employ more advanced techniques from Tomita-Takesaki theory combined with ergodicity, to write the equations directly at the level of the GNS
von Neumann algebra $\pi_{\omega_0}(\ga)''$.
This is done through a careful application of theory of the positive self-polar cone associated with a standard von Neumann algebra, which is briefly recalled in Section 
\ref{thecone} relative to our purposes.

Suppose now that in addition, $\om_o\in\cs(\ga)$ is also faithful.\footnote{In the noncommutative case, there exist non faithful states $\om_o\in\cs(\ga)$ whose support in the bidual is central, see below.} We then have the $*$-isomorphic copy $\pi_{\om_o}(\ga)$ of $\ga$ in $\pi_{\om_o}(\ga)''$. Therefore, it is meaningful to distinguish the solutions 
$a\in\pi_{\om_o}(\ga)$ of \eqref{i0}, which indeed correspond to that of \eqref{i} in this particular situation,
denoted as {\it \lq\lq continuous"}, and those $\pi_{\om_o}(\ga)''\smallsetminus\pi_{\om_o}(\ga)$ denoted as {\it \lq\lq measurable non continuous}.\footnote{The general definition involving the nature of the solutions of the cohomological equations is provided in Definition \ref{dcmn}.} 

The existence/non existence of continuous, or measurable non continuous solutions of the nontrivial solutions of \eqref{i0} is connected with some interesting ergodic properties of the skew-product dynamical systems under consideration, such as the minimality and the unique ergodicity with respect to the fixed-point subalgebra. 

We first recall that, concerning the classical situation when $\ga\sim C(X_o)$ and $\a=\id_\ga$, such that the crossed product collapses to the usual cartesian product between $X_o$ and $\bt$, in \cite{Fu} ({\it cf.} Remark on pag. 582) it was noticed  that the absence of nontrivial continuous solutions of \eqref{i0}, $n\neq 0$,  assures the minimality of the arising skew-product dynamical system. Therefore, for such kind of classical dynamical systems, strict ergodicity would imply minimality.

In addition, in the above mentioned paper by Furstenberg ({\it cf.} pag. 585), an example of a classical skew-product dynamical system for which there are only nontrivial measurable non continuous solutions of \eqref{i0}, $n\neq 0$ was exhibited. Therefore, such an example would provide a $C^*$-dynamical system which is minimal but not uniquely ergodic.

In Section \ref{uniqueergo}, the nature of the solutions of the cohomological equations is also related to the generalisation of unique ergodicity, that is unique ergodicity with respect to the fixed-point subalgebra by Abadie and Dykema in \cite{AD}, which involves the question of whether the norm-limit of the Ces\`{a}ro means, $\lim_{n\rightarrow+\infty}\frac{1}{n}\sum_{k=0}^{n-1} \Phi_{\theta, u}^k(x)$,
exists for any $x\in\mathfrak{A}\rtimes_\alpha\mathbb{Z}$. Indeed, for the uniquely ergodic $C^*$-dynamical system $(\ga,\th,\om_o)$, it is shown that the unique ergodicity with respect to the fixed-point subalgebra implies that the cohomological equations \eqref{i0} have only \lq\lq continuous'' solutions. 

It would be desirable to prove the converse of the last mentioned result. However, we have shown that the converse holds true for processes on the torus: for the classical skew-product examples considered in \cite{Fu} without the assumption of metrisability, the unique ergodicity with respect to the fixed-point subalgebra is equivalent to the existence of only \lq\lq continuous'' solutions of the cohomological equations \eqref{i0}. We also address the interesting, still open, problem concerning the equivalence between minimality and topological ergodicity for skew-product dynamical systems outlined in \cite{Fu}, p. 582.

The present paper is complemented by Section \ref{aind} in which we show how the skew-product dynamical system is similar to the process of {\it alpha-induction} arising from the study of the factor-subfactor inclusions of $W^*$-algebras and the theory of the Jones index. A final section containing some illustrative examples is also added.

\section{Preliminaries}
\label{sec2}

Let $E$ be a normed space with norm $\|\,{\bf\cdot}\,\|$. In particular, $\|x\|\equiv\|x\|_\ch:=\sqrt{\langle x,x\rangle}$ is the Hilbertian norm of $x$ in the Hilbert space $\ch$, equipped with the inner product $\langle\,{\bf\cdot}\,,\,{\bf\cdot}\,\rangle$, linear in the first variable.
For a continuous or a Borel measurable function $f$ defined on the locally compact Hausdorff space $X$ equipped with the Radon measure $\m$, $\|f\|\equiv\|f\|_\infty$ will denote the \lq\lq sup"-norm or the 
\lq\lq esssup''-norm  for $f$ continuous or measurable, respectively. 

\vskip.3cm

Let $\bt:=\{z\in\bc\mid |z|=1\}$ be the abelian group consisting of the unit circle. It is homeomorphic to the interval $[0,2\pi)$ by means of the map $[0,2\pi)\ni\a\mapsto e^{\imath\a}\in\bt$, after identifying the end-points $0$ and $2\pi$. On $\bt$, $\l$ denotes its the Lebesgue-Haar measure.

\vskip.3cm

If $\mathpzc{f}:X\to X$ is an invertible map on a point-set $X$:
\begin{itemize} 
\item[{\bf --}] $\mathpzc{f}^0:=\id_X$, and its inverse is denoted by $\mathpzc{f}^{-1}$;
\item[{\bf --}] for the $n$-times composition, $\mathpzc{f}^n:=\underbrace{\mathpzc{f}\circ\cdots\circ \mathpzc{f}}_{n-\text{times}}$;
\item[{\bf --}] for the $n$-times composition of the inverse, $\mathpzc{f}^{-n}:=\underbrace{\mathpzc{f}^{-1}\circ\cdots\circ \mathpzc{f}^{-1}}_{n-\text{times}}$.
\end{itemize}
Therefore, $\mathpzc{f}^{n}:X\to X$ is meaningful for any $n\in\bz$ with the above convention, and provides an action of the group $\bz$ on the set $X$. 

\vskip.3cm

If $(x_k)_{k\in\bz}\subset X$ is a two-sided sequence in the Banach space $X$, we say that the series 
$\sum_{k\in\bz}x_k$ {\it converges (or is summable) in the sense of Ces\`aro} if the sequence of the Ces\`aro averages 
$\Big(\frac1{n}\sum_{l=0}^{n-1}\big(\sum_{|k|\leq l}x_k\big)\Big)_{n\in\bn}\subset X$ of the partial sums is a Cauchy sequence.\footnote{The analogous definition for one-sided sequence $(x_k)_{k\in\bn}\subset X$ can be easily provided.}

\vskip.3cm

If $\ga$ is a unital $C^*$-algebra, we denote by $\cu(\ga)$ the group of the unitary elements of $\ga$. Analogously for $\ch$ a Hilbert space, with a slight abuse of notations we denote by $\cu(\ch)\subset\cb(\ch)$ the group of the unitary operators acting on $\ch$.

For $a\in\cb(\ch)$ and $\l\in\bc$, denote by $E_\l^a$ the self-adjoint projection onto the closed subspace of $\ch$ of the eigenvectors associated to $\l$. The operator $a\in\cb(\ch)_1$, the unit ball of $\cb(\ch)$, is said to be {\it mean-ergodic} if $\dim(E_1^a)=1$.

\vskip.3cm

For a $C^*$-algebra $\mathfrak{A}$, with or without an identity, and a positive linear functional $f\in\ga^*$, with $(\ch_f,\pi_f,\xi_f)$ we denote the Gelfand-Naimark-Segal (GNS for short) representation associated to $f$, see {\it e.g.} \cite{T}.

For the positive linear functional $f$ on the $C^*$-algebra $\ga$, we set
$$
\gpn_f:=\{x\in\ga\mid f(x^*x)=0\}\,,
$$
which is a closed left ideal of $\ga$. In addition, 
\begin{equation}
\label{idek}
\ker(\pi_f)\subset\gpn_f\,.
\end{equation}
If $\ch$ is a Hilbert space with $\dim(\ch)\geq2$ and $\xi\in\ch$ is a non null-vector with $\om_\xi:=\langle\,{\bf\cdot}\,\xi,\xi\rangle$ the vector functional, we have 
$\ker(\pi_{\om_\xi})\subsetneq\gpn_{\om_\xi}$.

Notice that the cyclic vector $\xi_f$ of the GNS representation of $f$ is also separating for 
$\pi_f(\ga)''$ if and only if its support $s(f)$ (of its canonical embedding) in the bidual $W^*$-algebra $\ga^{**}$ is central, see {\it e.g.} \cite{NSZ}, pag. 15.
At any rate, the property is not automatic. In fact, examples can be given of separating vectors for a given concrete  $C^*$-algebra $\mathfrak{A}\subset\mathcal{B}(\mathcal{H})$ which nonetheless fail to be separating for $\mathfrak{A}''$ , see {\it e.g.} \cite{T0}, Example 1 and Example 2.

\vskip.3cm

We recall some basic definitions concerning ${\rm End}(\mathfrak{A})$, which are used somewhere in the sequel.
\begin{itemize}
\item [(i)] The objects of ${\rm End}(\mathfrak{A})$ are all unital $*$-endomorphisms of $\mathfrak{A}$. Obviously, $\aut(\ga)\subset{\rm End}(\ga)$.
\item [(ii)]  The morphisms of ${\rm End}(\mathfrak{A})$ are given by intertwining elements, that is for any two
objects $\rho, \sigma$ of ${\rm End}(\mathfrak{A})$,  the set ${\rm Hom}(\rho, \sigma)$ of morphisms between $\rho$ and $\sigma$ is the Banach space
$$
{\rm Hom}(\rho, \sigma):=\{x\in\ga\mid\rho(a)x=x\sigma(a)\,,\,\,\,a\in\ga\}\,.
$$
\item [(iii)] Two objects $\rho, \sigma$ are said to be unitarily equivalent if there exists a unitary $u\in{\rm Hom}(\rho, \sigma)$.
\item [(iv)] The tensor product of objects in ${\rm End}(\mathfrak{A})$ is the composition of endomorphisms, and the tensor product between arrows $x_1\in{\rm Hom}(\rho_1,\sigma_1)$,  $x_2\in{\rm Hom}(\rho_2,\sigma_2)$ is defined as
$$
{\rm Hom}(\r_1\otimes\rho_2,\s_1\otimes\sigma_2)\ni x_1\otimes x_2:=x_1 \s_1(x_2)=\r_1(x_2)x_1\,.
$$
\end {itemize}

\section{$C^*$-dynamical systems}

A (discrete) $C^*$-dynamical system is a pair $(\ga,\F)$, where $\ga$ is a $C^*$-algebra and $\F:\ga\to\ga$ a completely positive linear map. We also suppose that
$\|\F\|=1=\sup_\iota\|\F(u_\iota)\|$,
where $(u_\iota)_\iota\subset\ga$ is any approximate identity. 
If $\ga$ has the identity $\idd_\ga$, we additionally suppose that $\F$ is unital, and therefore
$\|\F\|=\|\F(\idd_\ga)\|=1$.

We denote by $\ga^\F:=\{a\in\ga\mid \F(a)=a\}$ the fixed-point subspace. It is closed, and satisfies $\big(\ga^\F\big)^*=\ga^\F$ since $\F$ is a $*$-map. If $\ga$ is unital, $\idd_\ga\in\ga^\F$, and therefore $\ga^\F$ is an operator system. If $\F$ is multiplicative, then $\ga^\F$ is a $*$-subalgebra of $\ga$.

We denote by $\cs(\ga)^\F:=\{\f\in\cs(\ga)\mid \f\circ\F=\f\}$ the set of all invariant states under the dynamics inherited by $\F$ on $\ga$. It is convex and weak-$*$ locally compact. 
If $\ga$ is unital, 
$\cs(\ga)^\F$ is weak-$*$ compact, and the extreme invariant states are called {\it ergodic}, see {\it e.g.} \cite{Sa}, Definition 3.1.9.
The set of ergodic states is denoted by $\partial\big(\cs(\ga)^\F\big)$. As usual, if $\om\in\partial\big(\cs(\ga)^\F\big)$, we say that the $C^*$-dynamical system $(\ga,\F,\om)$ is 
{\it ergodic}.

A $C^*$-dynamical system $(\ga,\F)$ is said to be {\it uniquely ergodic} if
\begin{itemize}
\item[(i)] $\cs(\ga)^\F=\emptyset$ when $\ga$ is not unital,
\item[(ii)]  $\cs(\ga)^\F=\{\f\}$, a {\it singleton}, when $\ga$ is unital.
\end{itemize}
We recall that for uniquely ergodic systems, in the unital case, one has $\frac1{n}\sum_{k=0}^{n-1}\F^k(a)\to\f(a)\idd_\ga$
in norm ({\it e.g.} \cite{AD}, Theorem 3.2). In particular, it also follows that
\begin{equation}
\label{xco}
\frac1{n}\sum_{k=0}^{n-1}\f(a\F^k(b))\xrightarrow[n \to +\infty]{}\f\big(a(\f(b)\idd_\ga)\big)=\f(a)\f(b)\,.
\end{equation}

Notice that, if $\ga$ is not unital, $\F$ extends in a unique way to $\ga_1:=\ga+\bc1$, the unital $C^*$-algebra obtained by adjoining the identity $1\equiv\idd_{\ga_1}$ to $\ga$. In such a way, 
$$
\cs(\ga_1)^\F=\cs(\ga)^\F\bigcup\{\om_\infty\}\,,
$$
the one-point compactification of $\cs(\ga)^\F$ with {\it the state at infinity} $\om_\infty$. Since $\om_\infty$ is automatically $\F$-invariant, the correct definition of unique ergodicity
for non-unital $C^*$-dynamical systems $(\ga,\F)$ must require the absence of any $\F$-invariant state in $\cs(\ga)^\F$. The reader is referred to \cite{F5}, Section 5, for some illustrative examples of such a situation. From now on we suppose that the involved $C^*$-algebra $\ga$ is unital with identity $\idd_\ga=:I$.
The $C^*$-dynamical system $(\ga,\F)$ is said to be {\it topologically ergodic} if $\ga^\F=\bc I$. 
We next single out the definition of a \emph{uniquely ergodic w.r.t. the fixed-point subalgebra} 
 $C^*$-dynamical system $(\ga,\a)$, where $\a$ is a $*$-automorphism or merely a multiplicative $*$-map.
In light of Theorem 3.2 in \cite{AD} and  Theorem 2.1 in \cite{FM3}, this definition can be formulated  by requiring any of the
following equivalent conditions.

\begin{defin}
\label{abdy}
The $C^*$-dynamical system $(\ga,\a)$ is said to be {\it uniquely ergodic with respect to the fixed-point subalgebra} ({\it cf.}  \cite{AD, FM3}) if one of the following equivalent properties holds true:
\begin{itemize}
\item[(i)] every bounded linear functional on $\ga^\a$ has a unique bounded, $\a$-invariant linear extension to $\ga$;
\item[(ii)] every state on $\ga^\a$ has a unique $\a$-invariant state-extension to $\ga$;
\vskip.1cm
\item[(iii)] $\overline{\ga^\a+\{a-\a(a)\mid a\in\ga\}}=\ga$;
\item[(iv)] the sequence $\Big(\frac1{n}\sum_{k=0}^{n-1}\a^k\Big)_n$ of the Ces\`{a}ro averages
converges pointwise in norm;
\item[(v)] the sequence $\Big(\frac1{n}\sum_{k=0}^{n-1}\a^k\Big)_n$ of the Ces\`{a}ro averages
converges pointwise in the weak topology;
\vskip.1cm
\item[(vi)] $\ga^\a+\overline{\{a-\a(a)\mid a\in\ga\}}=\ga$.
\end{itemize}
\end{defin}

In particular, $(\ga,\a)$ is uniquely ergodic w.r.t. the fixed-point subalgebra if and only if the ergodic averages $\frac1{n}\sum_{k=0}^{n-1}\a^k$ converge, pointwise in norm, 
to a conditional expectation onto $\ga^\a$ which is automatically invariant under $\a$.

For more general $C^*$-dynamical systems $(\ga,\F)$ based on completely positive unital maps $\F$, the fixed-point subspace $\ga^\F$ is an operator system and the ergodic averages $\frac1{n}\sum_{k=0}^{n-1}\F^k$ converge, pointwise in norm, to a completely positive
invariant projection onto $\ga^\F$. 

 We also note that:
\begin{itemize}
\item[(a)] if $\ga^\a=\bc I$, then the unique ergodicity w.r.t. the fixed-point subalgebra of $(\ga,\a)$ reduces to the usual unique ergodicity with  $\ce=\f(\,{\bf \cdot}\,)I$, where 
$\f\in\cs(\ga)$ is the unique $\a$-invariant state;
\item[(b)] if the $C^*$-dynamical system $(\ga,\a)$ is uniquely ergodic, then it is topologically ergodic.
\end{itemize}

When working with  a specific invariant state, we also denote a $C^*$-dynamical system by a triple $(\ga,\F,\f)$. For example, if 
$(\ga,\F)$ is uniquely ergodic with $\f$ as the unique invariant state, we directly write $(\ga,\F,\f)$.

For a $C^*$-dynamical system $(\ga,\F,\f)$, consider the GNS representation $\big(\ch_\f,\pi_\f,\xi_\f\big)$. Then there exists a unique contraction $V_{\f,\F}\in\cb(\ch_\f)$, which is unitary in the case of $*$-automorphisms or an isometry for multiplicative maps, such that $V_{\f,\F}\xi_\f=\xi_\f$, and
$$
V_{\f,\F}\pi_\f(a)\xi_\f=\pi_\f(\F(a))\xi_\f\,,\quad a\in\ga\,,
$$
see {\it e.g.} \cite{NSZ}, Lemma 2.1.
The quadruple $\big(\ch_\f,\pi_\f, V_{\f,\F},\xi_\f\big)$ is called the {\it covariant GNS representation} associated to $(\ga,\F,\f)$.
From now on, we deal only with $*$- automorphisms without any further mention by noticing that part of the forthcoming analysis can be extended to general dynamical systems based on completely positive maps.

For a given $C^*$-dynamical system $(\ga,\F)$, an invariant state $\f\in\cs(\ga)^\F$  is said to be {\it weakly clustering} ({\it e.g.} \cite{AKTH}, Definition II.1) if
$$
\lim_{n\to+\infty}\frac1{n}\sum_{k=0}^{n-1}\f(a\F^k(b))=\f(a)\f(b)\,,\quad a,b\in\ga\,.
$$
It is well known that, for $\f\in\ga^\F$,

\smallskip

\centerline{$\f$ is weakly clustering $\iff V_{\f,\F}$ is mean ergodic $\Longrightarrow \f$ is ergodic,}

\smallskip

\noindent
and the last implication becomes an equivalence if the subset 
\begin{equation}
\label{abset}
E_1^{V_{\f,\F}}\pi_\f(\ga)E_1^{V_{\f,\F}}\subset\cb(\ch_\f)
\end{equation}
is made of mutually commuting 
operators, see {\it e.g}. \cite{Sa}, Proposition 3.1.10.\footnote{When the group $\bz$ is acting, a state $\f\in\cs(\ga)^\F$ is said to be $\bz$-abelian if the set \eqref{abset}
is made of commuting operators. For general groups $G$, we speak of {\it $G$-abelian states}.}

We include the following fact, perhaps known to experts:
\begin{prop}
\label{scrz}
Let $\ga$ be a $C^*$-algebra, and $\f\in\cs(\ga)$. If the support of $\f$ in the bidual is central, then
$\ker(\pi_\f)=\gpn_\f$.
\end{prop}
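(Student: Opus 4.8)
The plan is to prove the nontrivial inclusion $\gpn_\f\subset\ker(\pi_\f)$; the reverse inclusion is \eqref{idek}, already stated. Recall that $s(f)\in\ga^{**}$ denotes the support projection of (the normal extension of) $\f$, and that the GNS representation $\pi_\f$ extends to a normal $*$-homomorphism $\tilde\pi_\f$ of $\ga^{**}$ with $\ker(\tilde\pi_\f)=(\idd-z)\ga^{**}$ for the central projection $z=z(s(f))$, the central support of $s(f)$ in $\ga^{**}$. Thus $\ker(\pi_\f)=\ga\cap(\idd-z)\ga^{**}$. By hypothesis $s(f)$ is central, so $z=s(f)$, and hence $\ker(\pi_\f)=\ga\cap(\idd-s(f))\ga^{**}$.

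First I would show $\gpn_\f\subset\ga\cap(\idd-s(f))\ga^{**}$. Take $x\in\gpn_\f$, i.e.\ $\f(x^*x)=0$. Since $\f$ is a normal state on $\ga^{**}$ with support $s(f)$, the condition $\f(x^*x)=0$ forces $x^*x\,s(f)=0$, equivalently $x\,s(f)=0$ (using that $s(f)$ dominates the support of the positive element $x^*x$ only after this step: concretely, $\|x s(f)\xi_\f\|^2 = \f(s(f)x^*x s(f))$, and using $s(f)$ central this equals $\f(x^* x s(f)) = \f(x^*x)=0$, and more robustly one uses that $s(f)$ is precisely the largest projection $p$ with $\f(\idd-p)=0$, so $\f(x^*x)=0\Rightarrow x^*x\le \|x\|^2(\idd-s(f))\Rightarrow s(f)x^*xs(f)=0\Rightarrow xs(f)=0$). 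Then $x=x(\idd-s(f))\in(\idd-s(f))\ga^{**}$, using centrality of $s(f)$ to put the factor on either side. Hence $x\in\ker(\pi_\f)$.

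The only delicate point is the implication $\f(x^*x)=0\Rightarrow xs(f)=0$, where centrality of $s(f)$ is what makes it clean: in general one only gets $s(f)\,x^*x\,s(f)=0$, which still yields $x s(f)=0$ by positivity, so actually centrality is not even needed for this inclusion — it is needed for the converse. Indeed, for the reverse direction one already has $\ker(\pi_\f)\subset\gpn_\f$ from \eqref{idek}, so strictly speaking the content of the proposition is entirely the inclusion just described together with the identification $\ker(\pi_\f)=\ga\cap(\idd-s(f))\ga^{**}$, which holds precisely because centrality of $s(f)$ makes it equal to its own central support. I would therefore organize the write-up as: (1) recall $\ker(\pi_\f)=\ga\cap(\idd-z(s(f)))\ga^{**}$; (2) invoke centrality to get $z(s(f))=s(f)$; (3) show $\gpn_\f=\ga\cap(\idd-s(f))\ga^{**}$ by the positivity argument above; (4) combine. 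The main (and essentially only) obstacle is making step (3) rigorous at the level of the bidual, i.e.\ carefully passing between the normal extension $\f$ on $\ga^{**}$, its support projection, and the vector state $\om_{\xi_\f}$ on $\pi_\f(\ga)''$, which I would handle by the standard fact (\cite{NSZ}, p.~15, already cited) that $\xi_\f$ is separating for $\pi_\f(\ga)''$ iff $s(f)$ is central.
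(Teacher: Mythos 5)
Your plan is correct, but it follows a genuinely different route from the paper. The paper's proof is a two-line argument entirely inside the GNS representation: for $a\in\gpn_\f$ one has $\|\pi_\f(a)\xi_\f\|^2=\f(a^*a)=0$, and since centrality of $s(\f)$ is equivalent to $\xi_\f$ being separating for $\pi_\f(\ga)''$ (the fact from \cite{NSZ} recalled in the preliminaries), $\pi_\f(a)=0$; the reverse inclusion is \eqref{idek}. You instead work in the bidual: you invoke the standard identification $\ker(\pi_\f)=\ga\cap(\idd-z(s(\f)))\ga^{**}$, use centrality to replace the central cover $z(s(\f))$ by $s(\f)$ itself, and then check by a positivity argument with the support projection that $\gpn_\f=\{x\in\ga\mid x\,s(\f)=0\}$. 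Both arguments are sound and rest on the same underlying dichotomy (centrality of the support versus the separating property of $\xi_\f$); yours buys a slightly finer picture -- it exhibits $\gpn_\f$ as the annihilator of $s(\f)$ with no centrality hypothesis and pinpoints centrality as exactly the identification $z(s(\f))=s(\f)$ -- at the cost of importing the structure theory of the universal enveloping von Neumann algebra (kernel of the normal extension of $\pi_\f$ equals the complement of the central cover), whereas the paper's separating-vector argument is shorter and uses only material already recorded in Section 2. One small caveat: your aside that \lq\lq centrality is not even needed for this inclusion --- it is needed for the converse'' is misleading as phrased, since the converse inclusion $\ker(\pi_\f)\subset\gpn_\f$ is \eqref{idek} and holds always; centrality is what makes the nontrivial inclusion $\gpn_\f\subset\ker(\pi_\f)$ work, precisely through $z(s(\f))=s(\f)$, as your own steps (1)--(4) correctly record.
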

\begin{proof}
Suppose $a\in\gpn_\f$, then $\|\pi_\f(a)\xi_\f\|=0$. Since $\xi_\f$ is separating for $\pi_\f(\ga)''$, then $\pi_\f(a)=0$, and therefore $a\in\ker(\pi_\f)$. The proof follows as the reverse inclusion ({\it cf.} \eqref{idek}) holds true.
\end{proof}
As an immediate consequence, we find the well-known fact that a state on a commutative $C^*$-algebra is faithful if and only if its GNS representation is faithful. 
This is by no means true in the noncommutative setting, as is easily realized by considering a pure (vector) state on the $C^*$-algebra $\mathcal{K}(\mathcal{H})$ of compact
operators on a Hilbert space $\mathcal{H}$.

Proposition \ref{scrz} suggests some natural definitions concerning uniquely ergodic $C^*$-dynamical systems. Indeed, let $(\ga,\a,\f)$ be an uniquely ergodic $C^*$-dynamical system,
\begin{itemize}
\item[(i)] we say that $(\ga,\a,\f)$ is {\it strictly ergodic} ({\it e.g.} \cite{Fu}) if $\f$ is faithful;
\item[(ii)] we say that $(\ga,\a,\f)$ is {\it sharply ergodic} if it is strictly ergodic, and in addition the support of $\f$ in the bidual is central.
\end{itemize}

Since the support of any state is automatically central in commutative cases, the above definitions do differ only when $\ga$ is not commutative.

We say that the $C^*$-dynamical system $(\ga,\a)$ is {\it minimal} if the unique $\a$-invariant hereditary $C^*$-subalgebras of $\ga$ are $\bc I$ and $\ga$ itself.
We easily get that 

\smallskip

\centerline{$(\ga,\a)$ is minimal $\Longrightarrow (\ga,\a)$ is topologically ergodic,}

\smallskip

\noindent
but the converse is false as  is known already from the classical theory, see {\it e.g.} \cite{F1}, Section 4.

\section{The $C^*$-crossed product}
\label{cfprx}

We fix a $C^*$-dynamical system $\big(\ga,\{\a_g\mid g\in G\}\big)$ made of a $C^*$-algebra and a pointwise norm-continuous representation $g\in G\mapsto\a_g\in\aut(\ga)$ of the locally compact group $G$. With any such dynamical system, it is possible to associate a new $C^*$-algebra $\ga \rtimes_\a G$ called $C^*$-{\it crossed product} of $\ga$ and the action $\a_g$ of the group $G$. Henceforth, we will be dealing with the simpler situation of the discrete group $\bz$,  which corresponds to having a single $*$-automorphism  along with its powers $\a\in\aut(\ga)$, $\a_n:=\a^n$, $n\in\bz$, see {\it e.g.} \cite{D}.

For a given $C^*$-dynamical system $(\ga,\a)$ with $\a\in\aut(\ga)$, consider all formal finite sums
$$
\bigg\{\sum_{k\in F}V^ka_k\mid a_k\in\ga,\,\,F\subset\bz\,\,\text{finite set}\,\bigg\}\,,
$$
where $V$ is an outer unitary satisfying the commutation relation
$$
VaV^*=\a(a)\,,\quad a\in\ga\,.
$$
As is known, $\mathfrak{A} \rtimes_{\alpha}\mathbb{Z}$ is nothing but the maximal $C^*$-completion of $\gb_o$, see {\it e.g.} \cite{D}.
The crossed product $\mathfrak{A}\rtimes_\alpha \mathbb{Z}$ is acted upon 
the one-dimensional torus $\mathbb{T}$ through the so-called {\it gauge automorphisms}, which are uniquely determined by their action on the generators:
$$
\r_z(a):=a\,,\,\,\,\r_z(V):=zV\,,\quad a\in\ga\,,\,\,z\in\bt\,.
$$
This allows one to produce 
a conditional expectation $E$ of $\mathfrak{A} \rtimes_{\alpha}\mathbb{Z}$ onto $\mathfrak{A}$ by averaging the action of $\mathbb{T}$, {\it i.e.}
\begin{equation}
\label{gcxe}
E(x):=\oint \rho_z(x)\frac{\di z}{2\pi\imath z}\,,\quad x\in\mathfrak{B}\,.
\end{equation}

Let $\om_o\in\cs(\ga)$. Composing $\omega_o$ after $E$ yields a state 
\begin{equation}
\label{stainv}
\omega:=\omega_o \circ E
\end{equation}
on $\mathfrak{A} \rtimes_{\alpha}\mathbb{Z}$. Notice that, since $E$ is faithful, $\om$ will be faithful provided 
$\om_o$ is so. 
In addition,
\begin{equation}
\label{tga}
\om(V^na)=\om_o(a)\delta_{n,0}=\om(aV^n)\,,\quad a\in\ga\,,\,\,n\in\bz\,.
\end{equation}
as is easily checked.

At this point, we would like to point out that the centrality the support $s(\om_o)\in\ga^{**}$ of the state $\omega_o$ carries over to that $s(\om)\in(\mathfrak{A} \rtimes_{\alpha}\mathbb{Z})^{**}$ of that of
$\omega$ at the level of the crossed product $\mathfrak{A}\rtimes_\alpha\mathbb{Z}$. We assume $\om_o\in\cs(\ga)^\a$, and note that we do not use such a property but in Corollary \ref{inczfx}. Thus we have chosen not to pursue the study of the general case when $\om_o$ is not invariant under $\a$.

In such a situation, this property easily follows by \cite{T00}, Corollary 5.13. We sketch the proof for the convenience of the reader.
\begin{prop}
\label{czni}
If $s(\omega)\in Z\big((\mathfrak{A}\rtimes_\alpha\mathbb{Z})^{**}\big)$, then  $s(\omega_o)\in Z(\mathfrak{A}^{**})$, and the reverse implication holds true with the additional condition $\om_o\in\cs(\ga)^\a$.
\end{prop}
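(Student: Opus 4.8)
The plan is to exploit the structure of $(\mathfrak A\rtimes_\alpha\mathbb Z)^{**}$ as a von Neumann algebra, together with the fact that $E$ and the gauge action $\rho$ are both ultraweakly continuous after bidualization. Recall that centrality of $s(\omega)$ is equivalent to $\xi_\omega$ being separating for $\pi_\omega(\mathfrak A\rtimes_\alpha\mathbb Z)''$, and likewise for $\omega_o$; so I would phrase the whole argument at the level of GNS triples. The key observation is that the GNS space $\mathcal H_\omega$ decomposes, via the conditional expectation $E$ and the Fourier decomposition $x=\sum_n a_nV^n$, into a Hilbert-space direct sum $\bigoplus_{n\in\mathbb Z}\mathcal H_n$, where $\mathcal H_0\cong\mathcal H_{\omega_o}$ (since $\omega|_{\mathfrak A}=\omega_o$), and each $\mathcal H_n=\overline{\pi_\omega(\mathfrak A V^n)\xi_\omega}$. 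Using \eqref{tga} one checks that $\pi_\omega(a)V^n\xi_\omega\mapsto \pi_{\omega_o}(a)\xi_{\omega_o}$ (appropriately twisted by $\alpha^{n}$) is, up to a unitary, the identification of $\mathcal H_n$ with a copy of $\mathcal H_{\omega_o}$ — here the invariance $\omega_o\in\cs(\mathfrak A)^\alpha$ is what makes $\|\pi_\omega(a)V^n\xi_\omega\|^2=\omega_o(\alpha^{-n}(a^*a))=\omega_o(a^*a)$ and hence makes these identifications isometric onto the \emph{same} space, which is exactly the extra hypothesis needed for the reverse implication.

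First I would prove the forward implication: assume $s(\omega)$ central, i.e. $\xi_\omega$ separating for $\pi_\omega(\mathfrak A\rtimes_\alpha\mathbb Z)''$. Since $\mathfrak A\subset\mathfrak A\rtimes_\alpha\mathbb Z$ with $E$ the canonical expectation and $\omega_o=\omega\circ\iota$, the GNS representation $(\mathcal H_{\omega_o},\pi_{\omega_o},\xi_{\omega_o})$ is realised as the restriction of $\pi_\omega$ to the subspace $\mathcal H_0=\overline{\pi_\omega(\mathfrak A)\xi_\omega}$, which is invariant under $\pi_\omega(\mathfrak A)$; let $p_0$ be the projection onto $\mathcal H_0$. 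One has $p_0\in\pi_\omega(\mathfrak A)'$, in fact $p_0$ is the Jones projection for the inclusion $\mathfrak A\subset\mathfrak A\rtimes_\alpha\mathbb Z$ implemented by $E$. Then $\pi_{\omega_o}(\mathfrak A)''=p_0\pi_\omega(\mathfrak A)''p_0$ acting on $\mathcal H_0$, and $\xi_{\omega_o}=\xi_\omega\in\mathcal H_0$. If $b\in\pi_{\omega_o}(\mathfrak A)''$ kills $\xi_{\omega_o}$, lift $b$ to $\tilde b=bp_0\in\pi_\omega(\mathfrak A)''p_0\subset\pi_\omega(\mathfrak A\rtimes_\alpha\mathbb Z)''$; then $\tilde b\,\xi_\omega=b\xi_{\omega_o}=0$, so by separatingness $\tilde b=0$, whence $b=0$. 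Thus $\xi_{\omega_o}$ is separating for $\pi_{\omega_o}(\mathfrak A)''$, i.e. $s(\omega_o)$ is central. (This direction does not use $\alpha$-invariance.)

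For the converse, assume $s(\omega_o)$ central and $\omega_o\in\cs(\mathfrak A)^\alpha$; I want $\xi_\omega$ separating for $\pi_\omega(\mathfrak A\rtimes_\alpha\mathbb Z)''$. The point is to identify $\pi_\omega(\mathfrak A\rtimes_\alpha\mathbb Z)''$ concretely on $\bigoplus_n\mathcal H_n$. Writing $N:=\pi_{\omega_o}(\mathfrak A)''$ with cyclic separating $\xi_{\omega_o}$, the $\alpha$-invariance of $\omega_o$ gives a unitary $U_\alpha\in\mathcal B(\mathcal H_{\omega_o})$ with $U_\alpha\pi_{\omega_o}(a)U_\alpha^*=\pi_{\omega_o}(\alpha(a))$ and $U_\alpha\xi_{\omega_o}=\xi_{\omega_o}$, so that $(N,U_\alpha)$ is a covariant system with invariant cyclic separating vector, and $(\mathcal H_\omega,\pi_\omega,\xi_\omega)$ is canonically the GNS triple of the von Neumann crossed product $N\bar\rtimes_{\mathrm{Ad}U_\alpha}\mathbb Z$ with respect to the dual-weight-type state $\omega_o\circ E_{vN}$, whose modular theory is well understood. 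Now invoke \cite{T00}, Corollary 5.13 (as the authors suggest): the dual weight / canonical state on a von Neumann crossed product has central support (equivalently, the GNS cyclic vector is separating) precisely when the underlying state does and the action preserves it — which is our situation. Concretely: Tomita–Takesaki for the crossed product gives that $\xi_\omega$ is separating for $N\bar\rtimes\mathbb Z$ because the modular conjugation and modular operator restrict nicely through $p_0$, and $\xi_\omega$ is already separating for $N=p_0(N\bar\rtimes\mathbb Z)p_0$-part while the $V^n$-components contribute faithfully by the isometric identifications $\mathcal H_n\cong\mathcal H_{\omega_o}$ noted above. Hence $s(\omega)$ is central.

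The main obstacle is the converse direction, specifically making rigorous the identification of $\pi_\omega(\mathfrak A\rtimes_\alpha\mathbb Z)''$ with the von Neumann crossed product $\pi_{\omega_o}(\mathfrak A)''\bar\rtimes\mathbb Z$ and transporting the separating-vector property across it — this is where one genuinely needs modular theory (or the cited \cite{T00}, Cor. 5.13) and where the hypothesis $\omega_o\in\cs(\mathfrak A)^\alpha$ is indispensable, since without it the vectors $V^n\xi_\omega$ need not generate copies of $\mathcal H_{\omega_o}$ and the crossed-product picture breaks down. Given the paper only wants a sketch "for the convenience of the reader", I would present the forward implication in full and, for the converse, reduce to \cite{T00}, Corollary 5.13 after setting up the covariant GNS data $(\pi_{\omega_o}(\mathfrak A)'',U_\alpha,\xi_{\omega_o})$ and observing $\omega=\omega_o\circ E$ is the associated canonical (dual) state.
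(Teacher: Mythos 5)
Your skeleton is the same as the paper's --- translate centrality of the supports into the separating property of the GNS vectors, use the decomposition $\mathcal{H}_\omega=\bigoplus_{n}\mathcal{H}_n$, and, for the reverse implication, identify $\pi_\omega(\mathfrak{A}\rtimes_\alpha\mathbb{Z})''$ with the von Neumann crossed product of $\pi_{\omega_o}(\mathfrak{A})''$ by ${\rm Ad}\,V_{\omega_o,\alpha}$ and lean on \cite{T00} --- but two steps do not hold up as written. In the forward direction, your lift $\tilde b=bp_0$ is not an element of $\pi_\omega(\mathfrak{A}\rtimes_\alpha\mathbb{Z})''$: the Jones projection $p_0$ lies in $\pi_\omega(\mathfrak{A})'$ but not in the crossed-product von Neumann algebra (in the model $\ell^2(\mathbb{Z};\mathcal{H}_{\omega_o})$ it does not commute with the shift-type operators generating the commutant), so the separating property of $\xi_\omega$ cannot be applied to $\tilde b$. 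The repair is immediate: since $p_0\in\pi_\omega(\mathfrak{A})'$, the induction $x\mapsto x\lceil_{\mathcal{H}_0}$ maps $\pi_\omega(\mathfrak{A})''$ onto $\pi_{\omega_o}(\mathfrak{A})''$, so choose $x\in\pi_\omega(\mathfrak{A})''\subset\pi_\omega(\mathfrak{A}\rtimes_\alpha\mathbb{Z})''$ with $x\lceil_{\mathcal{H}_0}=b$; then $x\xi_\omega=b\xi_{\omega_o}=0$ forces $x=0$, hence $b=0$.

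The genuine gap is in the reverse implication, which is the only part the paper actually proves. What you offer as the concrete step --- that the modular data ``restrict nicely through $p_0$'' and that the components $\mathcal{H}_n\cong\mathcal{H}_{\omega_o}$ ``contribute faithfully'' --- is not an argument: those isometric identifications exist for any state ($\alpha$-invariance is only needed to have $V_{\omega_o,\alpha}$ and hence the crossed-product picture), and they say nothing about whether a nonzero element of $\pi_\omega(\mathfrak{A}\rtimes_\alpha\mathbb{Z})''$ can annihilate $\xi_\omega$; at no point do you use the structure of that algebra or of its commutant. The paper's sketch supplies exactly the missing work: it realizes $(\mathcal{H}_\omega,\pi_\omega,\xi_\omega)$ on $\ell^2(\mathbb{Z};\mathcal{H}_{\omega_o})$, checks that this is the GNS triple of $\omega$ and that, by $\alpha$-invariance, $\pi_\omega$ is the von Neumann crossed-product representation, takes the standard generators $\pi'(B)$, $B\in\pi_{\omega_o}(\mathfrak{A})'$, and $\lambda'(h)$ of the commutant, and computes $(X\xi_\omega)(k)={\rm Ad}_{V_{\omega_o,\alpha}^{-k}}(B_{-k})\xi_{\omega_o}$ for finite sums $X=\sum_h\lambda'(h)\pi'(B_h)$; since ${\rm Ad}_{V_{\omega_o,\alpha}^{k}}$ preserves $\pi_{\omega_o}(\mathfrak{A})'$, cyclicity of $\xi_{\omega_o}$ for $\pi_{\omega_o}(\mathfrak{A})'$ yields cyclicity of $\xi_\omega$ for $\pi_\omega(\mathfrak{A}\rtimes_\alpha\mathbb{Z})'$, i.e.\ $\xi_\omega$ is separating. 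If you prefer your algebra-side formulation, the identification with the von Neumann crossed product still has to be written out, after which no dual-weight or modular theory is needed at all: the vector state at $\xi_\omega$ factors through the canonical normal faithful expectation $\widetilde{E}$ onto $\pi_\omega(\mathfrak{A})''$, which (by invariance) is the twisted-diagonal copy of $\pi_{\omega_o}(\mathfrak{A})''$ carrying the faithful state induced by $\xi_{\omega_o}$; faithfulness of the composition is precisely the separating property of $\xi_\omega$. As it stands, your proposal defers both the identification and this faithfulness/cyclicity verification to \cite{T00} plus an unsubstantiated modular-theoretic claim, so the main direction is not established.
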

\begin{proof}
We note that, if the support of $\om$ is central, then the support of $\om_o$ is of course central as well.
Then we only need to prove the reverse implication under the additional condition of the invariance of $\om_o$ under $\a$. To this aim,
we will show that $\xi_\omega$ is a cyclic vector for $\pi_\omega(\mathfrak{A}\rtimes_\alpha\mathbb{Z})'$.

On $\ch_\om:=\ell^2(\bz;\ch_{\om_o})$, define for $k\in\bz$,
\begin{align*}
&(\pi_\om(a)\xi)(k):=\pi_{\om_o}(\a^{-k}(a))\xi(k)\,,\quad a\in\ga\,,\\
&(\pi_\om(V^h)\xi)(k):=\xi(k-h)\,,\quad h\in\bz\,.
\end{align*}
Since
$$
\pi_\om(V^h)\pi_\om(a)\pi_\om(V^{-h})=\pi_\om(\a^h(a))\,,\quad a\in\ga\,,\,\,h\in\bz\,,
$$
by the universal property of the $C^*$-crossed product $\pi_\om$ extends to a representation of $\mathfrak{A}\rtimes_\alpha\mathbb{Z}$. With $\xi_\om(k)=\xi_{\om_o}\d_{k,0}$
$k\in\bz$, it is matter of routine to check that $(\ch_\om, \pi_\om,\xi_\om)$ provides the GNS representation of $\om$, up to unitary equivalence.

Suppose now that $\om_o$ is invariant under $\a$ with covariant GNS representation $(\ch_{\om_o}, \pi_{\om_o}, V_{\om_o,\a},\xi_{\om_o})$, then $\pi_\om$ comes from the von Neumann crossed product construction, see {\it e.g.} \cite{BR}, Definition 2.7.3. Therefore,
the commutant $\pi_\om(\mathfrak{A}\rtimes_\alpha\mathbb{Z})'$ is generated by
\begin{align*}
&(\pi'(B)\xi)(k):=B\xi(k)\,,\quad B\in\pi_{\om_o}(\ga)'\,,\\
&(\l'(h)\xi)(k):=V^{h}_{\om_o,\a}\xi(k+h)\,,\quad h\in\bz\,.
\end{align*}

Consider an element $X\in\pi_\om(\mathfrak{A}\rtimes_\alpha\mathbb{Z})'$ of the form
$$
X=\sum_{h\in\bz}\l'(h)\pi'(B_h)\,,\quad B_h\in\pi_{\om_o}(\ga)'\,,\,\,\,h\in\bz\,,
$$
with $B_h=0$ but finitely many of them. We obtain 
$$
(X\xi_\om)(k)=V^{-k}_{\om_o,\a}B_{-k}\xi_{\om_o}=\ad{}\!_{V^{-k}_{\om_o,\a}}(B_{-k})\xi_{\om_o}\,.
$$

By taking into account that $\ad{}\!_{V^{k}_{\om_o,\a}}(\pi_{\om_o}(\ga)')=\pi_{\om_o}(\ga)'$, we deduce that $\xi_{\om_o}$ is cyclic for $\pi_{\om_o}(\ga)'$ if and only if $\xi_\om$ is cyclic for $\pi_\om(\mathfrak{A}\rtimes_\alpha\mathbb{Z})'$.
\end{proof}
We end the present section by recalling the following result in \cite{D}, Theorem VIII.2.2, see also \cite{ACR} for some new general results in the noncommutative setting on the topic. Indeed, with $E:\ga\rtimes_\a\bz\rightarrow\ga$ given in \eqref{gcxe},
$x\in\mathfrak{A}\rtimes_\alpha\mathbb{Z}$ admits the \lq\lq Fourier expansion"
\begin{equation}
\label{cesexp}
x=\sum_{n\in\bz}V^nE(V^{-n}x)\,,
\end{equation}
where the convergence of the above sum is meant in norm, in the sense of Ces\`aro.

Considering the natural inclusions of von Neumann algebras $\pi_\om(\ga)''\subset\pi_\om(\ga\rtimes_\a\bz)''$ associated to the inclusions of $C^*$-algebras
$\ga\subset\ga\rtimes_\a\bz$, we can easily get the extension $\widetilde{E}:\pi_\om(\ga\rtimes_\a\bz)''\rightarrow\pi_\om(\ga)''$ of $E$, obtained by averaging the gauge action  
$z\rightarrow \tilde \r_z$, the extension of $z\rightarrow \r_z$
to the von Neumann algebra
$\pi_\omega(\mathfrak{A}\rtimes_\alpha\mathbb{Z})''$,
which provides a normal conditional expectation of $\pi_\om(\ga\rtimes_\a\bz)''$ onto $\pi_\om(\ga)''$.

We show that the \lq\lq Fourier expansion" for elements of $\pi_\om(\ga\rtimes_\a\bz)''$, probably known to the experts and analogous to that in \eqref{cesexp}, holds true as well.
\begin{prop}
\label{ftrf}
Each $X\in\pi_\om(\ga\rtimes_\a\bz)''$ admits the expansion
\begin{equation}
\label{cesexp1}
X=\sum_{n\in\bz}\pi_\om(V^n)\widetilde{E}(\pi_\om(V^{-n})X)\,,
\end{equation}
where the convergence of the above sum is meant in the strong operator topology, in the sense of Ces\`aro.\footnote{The last assertion means nothing else that, for each 
$\xi\in\ch_\om$, the series
$\sum_{n\in\bz}\pi_\om(V^n)\widetilde{E}(\pi_\om(V^{-n})X)\xi$ converges to $X\xi$ in the sense of Ces\`aro in the Banach space $\ch_\om$.}
\end{prop}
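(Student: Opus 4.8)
The plan is to reduce the strong-operator Ces\`aro expansion for $\pi_\om(\ga\rtimes_\a\bz)''$ to the known norm-Ces\`aro expansion \eqref{cesexp} for the $C^*$-algebra $\ga\rtimes_\a\bz$ by a density/continuity argument. First I would record the basic algebraic facts: since $\widetilde E$ extends $E$ and is $\pi_\om(\ga)''$-bimodular, and since $\tilde\r_z$ extends $\r_z$, for $x\in\ga\rtimes_\a\bz$ we have $\widetilde E(\pi_\om(V^{-n})\pi_\om(x))=\pi_\om\big(E(V^{-n}x)\big)$, so that for $X=\pi_\om(x)$ formula \eqref{cesexp1} is just the image under $\pi_\om$ of \eqref{cesexp}, with convergence even in norm; in particular the assertion holds on the norm-dense $*$-subalgebra $\pi_\om(\ga\rtimes_\a\bz)\subset\pi_\om(\ga\rtimes_\a\bz)''$.

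Next I would write, for $X\in\pi_\om(\ga\rtimes_\a\bz)''$ and $N\in\bn$, the $N$-th Ces\`aro partial sum
\begin{equation*}
S_N(X):=\frac1N\sum_{l=0}^{N-1}\sum_{|n|\le l}\pi_\om(V^n)\widetilde E(\pi_\om(V^{-n})X)
=\sum_{|n|\le N-1}c_{N,n}\,\pi_\om(V^n)\widetilde E(\pi_\om(V^{-n})X)\,,
\end{equation*}
with explicit Fej\'er-type weights $c_{N,n}=1-\frac{|n|}{N}\in[0,1]$. Each map $X\mapsto \pi_\om(V^n)\widetilde E(\pi_\om(V^{-n})X)$ is normal (composition of multiplication by the fixed operators $\pi_\om(V^{\pm n})$ and the normal expectation $\widetilde E$) and contractive, hence each $S_N$ is a normal, completely positive (being a convex combination of the compressions coming from averaging $\tilde\r_z$ over arcs, by the Fej\'er kernel), unital map on $\pi_\om(\ga\rtimes_\a\bz)''$ with $\|S_N\|\le1$; indeed $S_N(X)=\frac1{2\pi}\int_{-\pi}^{\pi}K_N(t)\,\tilde\r_{e^{\imath t}}(X)\,dt$ where $K_N$ is the Fej\'er kernel, which makes normality and the uniform bound transparent. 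The goal is $S_N(X)\xi\to X\xi$ for every $\xi\in\ch_\om$.

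Then I would invoke Kaplansky density: fix $X\in\pi_\om(\ga\rtimes_\a\bz)''$ with $\|X\|\le1$, $\xi\in\ch_\om$ and $\eps>0$, and pick $x\in\ga\rtimes_\a\bz$ with $\|\pi_\om(x)\|\le1$ and $\|(\pi_\om(x)-X)\xi\|<\eps$ and $\|(\pi_\om(x)^*-X^*)\xi\|<\eps$ (the two-sided strong approximation, available since the unit ball is involved). Writing $S_N(X)\xi-X\xi=S_N(X-\pi_\om(x))\xi+\big(S_N(\pi_\om(x))\xi-\pi_\om(x)\xi\big)+\big(\pi_\om(x)-X\big)\xi$, the middle term tends to $0$ as $N\to\infty$ by the already-established $C^*$-level statement, and the third term is $<\eps$. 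For the first term one uses that $S_N$ is unital completely positive with $\|S_N\|\le1$, so by the Cauchy--Schwarz (Kadison) inequality $\|S_N(Y)\xi\|^2\le\langle S_N(Y^*Y)\xi,\xi\rangle$; with $Y=X-\pi_\om(x)$, $\|Y\|\le2$, this gives $\|S_N(Y)\xi\|^2\le\langle S_N(Y^*Y)\xi,\xi\rangle$, and one estimates $\langle S_N(Y^*Y)\xi,\xi\rangle$ using $Y^*Y\le\|Y\|\,Y^*$ plus $Y^*Y\le\|Y\|\,Y$ — more simply, $\langle S_N(Y^*Y)\xi,\xi\rangle\le\|Y^*\|\,\langle S_N(|Y|^2)\xi,\xi\rangle$ is circular, so instead I would bound it directly by $\|S_N(Y^*Y)\xi\|\,\|\xi\|$... . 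Cleaner: since $S_N$ is positive and contractive, $\|S_N(Y)\xi\|^2\le\langle S_N(Y^*Y)\xi,\xi\rangle$, and because $S_N$ is a $w^*$-continuous average of $*$-automorphisms $\tilde\r_{e^{\imath t}}$ one has $\langle S_N(Y^*Y)\xi,\xi\rangle=\frac1{2\pi}\int K_N(t)\|\tilde\r_{e^{\imath t}}(Y)\xi'\|^2\,dt$ for a suitable vector — this is where care is needed, so the cleanest route is: each $\tilde\r_z$ is a $*$-automorphism implemented by a unitary $U_z$ on $\ch_\om$ (the second quantization of the gauge action) fixing $\xi_\om$, hence $\|\tilde\r_z(Y)\eta\|=\|Y\,U_z^*\eta\|$; then $\|S_N(Y)\xi\|\le\frac1{2\pi}\int K_N(t)\|Y U_{e^{\imath t}}^*\xi\|\,dt$, and using continuity of $t\mapsto U_{e^{\imath t}}^*\xi$ together with the strong two-sided approximation of $X$ by $\pi_\om(x)$ near each point of the (compact) orbit, a standard compactness/partition argument makes this $<C\eps$ uniformly in $N$. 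The main obstacle is exactly this last point — controlling $S_N(X-\pi_\om(x))\xi$ uniformly in $N$ — because the gauge action moves $\xi$ around the orbit and a single two-sided SOT-approximation of $X$ at $\xi$ need not be good along the whole orbit; I expect the fix to be either a uniform (over the compact orbit) Kaplansky approximation, or, more elegantly, to rewrite $S_N=\widetilde E_N$ as honest conditional-expectation-type maps and pass to the limit using normality of $\widetilde E$ on the increasing net of spectral subspaces, so that $S_N(X)\xi\to X\xi$ follows from $\big\|X-\sum_{|n|\le N-1}\pi_\om(V^n)\widetilde E(\pi_\om(V^{-n})X)\big\|_{2,\om}\to0$ in the GNS $2$-norm (which holds by orthogonality of the gauge-isotypic components in $L^2(\pi_\om(\ga\rtimes_\a\bz)'',\om)$ together with Fej\'er summability), and then upgrading from the vector $\xi_\om$ to all $\xi\in\ch_\om$ via $\xi\in\overline{\pi_\om(\ga\rtimes_\a\bz)'\xi_\om}$ (cyclicity of $\xi_\om$ for the commutant, which holds when $s(\om)$ is central as in Proposition \ref{czni}) and commutation of $\pi_\om(\ga\rtimes_\a\bz)'$ with all $S_N$.
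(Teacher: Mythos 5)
Your Fej\'er-kernel representation $S_N(X)=\frac1{2\pi}\int_{-\pi}^{\pi}K_N(t)\,\tilde\r_{e^{\imath t}}(X)\,{\rm d}t$ is exactly the paper's starting point, but the reduction to the $C^*$-level expansion \eqref{cesexp} via Kaplansky density is a detour that creates the very obstacle you flag: a single two-sided SOT approximation $\pi_\om(x)\approx X$ at $\xi$ does not control $S_N(X-\pi_\om(x))\xi$ uniformly in $N$, because the estimate $\|\tilde\r_{e^{\imath t}}(Y)\xi\|=\|Y\,U_{e^{\imath t}}^*\xi\|$ requires smallness of $Y$ on the whole orbit $\{U_{e^{\imath t}}^*\xi\}$. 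Of your two proposed repairs, the second is not admissible under the hypotheses of the proposition: cyclicity of $\xi_\om$ for $\pi_\om(\ga\rtimes_\a\bz)'$ is equivalent to $\xi_\om$ being separating for $\pi_\om(\ga\rtimes_\a\bz)''$, i.e. to centrality of $s(\om)$, which Proposition \ref{ftrf} does not assume (and which the paper only discusses as an extra hypothesis in Proposition \ref{czni}); so the upgrade from $\xi_\om$ to all $\xi\in\ch_\om$ would fail in general. The first repair (approximation uniform over the compact orbit, using that a bounded net converging strongly converges uniformly on compact sets) can be made to work, but it is both sketchy as written and unnecessarily heavy.

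The point you are missing is that no density argument is needed at all. Since $\tilde\r_z=\ad_{U_z}$ with $z\mapsto U_z$ strongly continuous and $U_z$ unitary, and since multiplication is jointly SOT-continuous on bounded parts of $\cb(\ch_\om)$, the map $z\mapsto\tilde\r_z(X)$ is pointwise SOT-continuous for \emph{every} $X\in\pi_\om(\ga\rtimes_\a\bz)''$, not just for $X\in\pi_\om(\ga\rtimes_\a\bz)$. This is the paper's route: fix $\xi$ and $\eps$, choose $\d_\eps$ with $\|\tilde\r_{e^{\imath\theta}}(X)\xi-X\xi\|<\eps$ for $|\theta|<\d_\eps$, and split the Fej\'er integral into $|\theta|<\d_\eps$ and $|\theta|\geq\d_\eps$, using $\frac1{2\pi}\int_{-\pi}^{\pi}F_N=1$ and the concentration of $F_N$ near $0$; this gives $\|S_N(X)\xi-X\xi\|\leq\eps+2\|X\|\,\|\xi\|\,\eps$ for $N$ large. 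In other words, the continuity input you invoke to salvage the error term (SOT continuity of $t\mapsto U_{e^{\imath t}}^*\xi$) already yields the theorem directly when applied to $X$ itself, making the $C^*$-level step, Kaplansky density, and any appeal to the commutant superfluous.
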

\begin{proof}
We start by noticing that $\bt\ni z\rightarrow\tilde \r_z\in\aut(\pi_\om(\mathfrak{A}\rtimes_{\alpha}\bz)'')$ is pointwise continuous in the strong operator topology. With $\tilde \r_z=\ad_{U_z}$, $U_z$ implementing the gauge automorphism $\tilde \r_z$ on $\pi_\om(\mathfrak{A}\rtimes_{\alpha}\bz)''$, this follows by noticing that $z\rightarrow U_z$ is continuous in the strong operator topology, and the latter is jointly continuous on the bounded parts of $\cb(\ch_\om)$.

For $l\in\bn$, let $s_l(X)=\sum_{|k|\leq l}\pi_\om(V^k)\widetilde{E}(\pi_\om(V^{-k})X)$ the partial sum of \eqref{cesexp1}, and $\s_n(X)=\frac{1}{n}\sum_{l=0}^{n-1} s_l(X)$, after some standard computations, the following equality is got to
$$
\s_n(X)=\frac{1}{2\pi}\int_{-\pi}^{\pi} F_n(\theta) \tilde{\rho}_{e^{\imath\theta}}(X){\rm d}\theta\,,
$$
where $F_n(\theta):=\frac{1}{n}\frac{\sin^2(\frac{n\theta}{2})}{\sin^2(\frac{\theta}{2})}$ is nothing
but the Fej\'er kernel. 

We recall that $\frac{1}{2\pi}\int_{-\pi}^\pi F_n(\theta)d\theta=1$, and for any
$\varepsilon,\delta>0$ there exists $n_{\varepsilon,\delta}>0$ such that $\frac{1}{2\pi}\int_{-\delta}^\delta F_n(\theta)d\theta\geq 1-\varepsilon$ for any $n>n_{\epsilon, \delta}$. 

Fix $\xi\in\ch_\om$, and $\varepsilon>0$. Since $\tilde \r_z$ is pointwise continuous in the strong operator topology,
there exists $\delta_\eps>0$ such that $|\theta|<\delta_\eps$ implies 
$\|\tilde{\rho}_{e^{\imath\theta}}(X)\xi-X\xi\| <\varepsilon$. Collecting together, we have
\begin{align*}
&\|\s_n(x)\xi-X\xi\|\leq \frac{1}{2\pi}\int_{-\pi}^\pi F_n(\theta)\|\tilde{\rho}_{e^{\imath\theta}}(X)\xi-X\xi\| {\rm d}\theta\\
=&\frac{1}{2\pi}\int_{-\delta_\eps}^{\delta_\eps}\!\!\!\!F_n(\theta)\|\tilde{\rho}_{e^{\imath\theta}}(X)\xi-X\xi\| {\rm d}\theta
+\frac{1}{2\pi}
\int_{|\theta|>\delta_\eps}\!\!\!\!F_n(\theta)\|\tilde{\rho}_{e^{\imath\theta}}(X)\xi-X\xi\| {\rm d}\theta\\
\leq&\varepsilon+ 2\|X\|\|x\|\varepsilon\,,
\end{align*}
as long as $n>n_{\varepsilon, \delta_\eps}$.
\end{proof}
We would like to note that the above inversion formulae can be generalised to other means reproduced by any approximate identity. We report the case associated to the Abel
mean, $0<r<1$, for which the analogous of \eqref{cesexp} and \eqref{cesexp1} are
\begin{align*}
x=&\lim_{r\uparrow1}\sum_{n\in\bz}r^{|n|}V^nE(V^{-n}x)\,,\quad x\in \ga\rtimes_\a\bz\,,\\
X=&\lim_{r\uparrow1}\sum_{n\in\bz}r^{|n|}\pi_\om(V^n)\widetilde{E}(\pi_\om(V^{-n})X)\,,\quad X\in\pi_\om(\ga\rtimes_\a\bz)''\,,
\end{align*}
where the convergence is in norm and in the strong operator topology, respectively.\footnote{Concerning the case of $X\in\pi_\om(\ga\rtimes_\a\bz)''$, we easily have the convergence in the strong-* operator topology as well.}

An immediate consequence of the last results is an inversion formula for the Fourier series of functions in $L^\infty(\bt,\di\l)$. 
Indeed, consider a countable dense set $\{h_n\mid n=1,2\dots\}$ of the unit ball of $L^2(\bt,\di\l)$. On the unit ball $L^\infty(\bt,\di\l)_1$ of $L^\infty(\bt,\di\l)$, define a distance given by
$$
d(f,g):=\sum_{n=1}^{+\infty}\frac1{2^n}\sqrt{\oint\frac{\di z}{2\pi\imath z}|(f(z)-g(z))h_n(z)|^2}\,,
$$
whose induced topology is nothing else than the strong operator topology, or equivalently the strong-* operator topology, restricted to $L^\infty(\bt,\di\l)_1$, see {\it e.g.} \cite{T}, Proposition II.2.7.

Proposition \ref{ftrf} leads immediately to
$$
\lim_nd(f,\s_n(f))=0\,,\quad f\in L^\infty(\bt,\di\l)_1\,,
$$
where
$$
\s_n(f):=\frac1{n}\sum_{k=0}^{n-1}\Big(\sum_{|l|\leq k}\hat f(l)z^l\Big)\,.
$$

We also consider the Abel mean of $f\in L^\infty(\bt,\di\l)$ defined for $0<r<1$ as
$$
A_r(f):=\sum_{k\in\bz}\hat f(l)r^{|k|}z^k\,,
$$
where the above series is totally convergent in all segments $[a,b]$, $0<a,b<1$. Analogously, we get
$$
\lim_{r\uparrow1}d(f,A_r(f))=0\,,\quad f\in L^\infty(\bt,\di\l)_1\,.
$$

\section{On the canonical cone associated to standard von Neumann algebras}
\label{thecone}

Let $(\cam,\ch)$ be a von Neumann algebra equipped with a cyclic and separating vector $\xi\in\ch$, that is a standard vector. A {\it standard von Neumann algebra} will be then a triplet $(\cam,\ch,\xi)$ made of a von Neumann algebra $(\cam,\ch)$, and unit vector $\xi\in\ch$ which is standard for $\cam$.

For the standard von Neumann algebra $(\cam,\ch,\xi)$, since $\xi$ is separating, for $a\in\cam$, $a\xi\in\ch\mapsto a^*\xi$ uniquely defines a densely defined, closable anti-linear map 
$$
S_o(a\xi)=a^*\xi\,,\quad a\in\cam\,,
$$
whose closure, denoted by $S$, has polar decomposition $S=J\D^{1/2}$. The operators $S$, $\D$ and $J$ are called the {\it Tomita involution, modular operator and conjugation}, respectively. The main results of Tomita-Takesaki theory are the remarkable fact that $\ad_{\D^{\imath t}}$ leaves $\cam$ globally invariant and therefore defines a one-parameter group of $*$-automorphisms of $\cam$ called {\it the modular group}, and that $\cam$ acts in standard form on $\ch$ with conjugation $J$:
\begin{itemize}
\item[(i)] $J\cam J=\cam'$,
\item[(ii)] $JzJ=z^*$, $z\in Z(\cam)$.
\end{itemize}

A canonical self-dual cone $\cp\subset\ch$ ({\it e.g.} Haa) can be defined as follows:
$$
\cp:=\overline{\{JaJa\xi\mid a\in\cam\}}=\cp=\overline{\{\D^{1/4}a\xi\mid a\in\cam_+\}}\,.
$$

For the standard von Neumann algebra $(\cam,\ch,\xi)$, we include the following result without proof ({\it cf.} \cite{SZ}, E.10.20) relative to the {\it polar decomposition} of any vector 
$\eta\in\ch$.
\begin{prop}
\label{szcon}
Under the above notations, for each $\eta\in\ch$ we have $\eta=v_\eta|\eta|$, uniquely determined by the conditions: $|\eta|\in\cp$, $v_\eta\in\cam$ and $v_\eta^*v_\eta=p_{|\eta|}$, the self-adjoint projection onto the closed subspace $[\cam'|\eta|]$.
\end{prop}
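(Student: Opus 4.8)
The plan is to obtain $|\eta|$ as the unique vector of the cone $\cp$ representing a prescribed normal positive functional on the commutant, and then to build $v_\eta$ by hand as a partial isometry intertwining the $\cam'$-action. The heuristic that dictates everything is this: if some decomposition $\eta=v_\eta|\eta|$ with $v_\eta\in\cam$, $v_\eta^*v_\eta=p_{|\eta|}$ and $|\eta|\in\cp$ exists, then for $a'\in\cam'$ one has $\langle a'\eta,\eta\rangle=\langle a'v_\eta^*v_\eta|\eta|,|\eta|\rangle=\langle a'p_{|\eta|}|\eta|,|\eta|\rangle=\langle a'|\eta|,|\eta|\rangle$, using that $v_\eta,v_\eta^*$ commute with $a'$ and that $p_{|\eta|}|\eta|=|\eta|$ (because $|\eta|\in[\cam'|\eta|]$). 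Hence $|\eta|$ is forced to induce the same vector functional on $\cam'$ as $\eta$, and $v_\eta$ is forced to be the closure of $a'|\eta|\mapsto a'\eta$; the work is to show these two prescriptions are consistent and actually deliver the asserted properties.

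First I would fix $|\eta|$ using the standard structure theory of the natural cone. Two facts enter: that $\cp$, being invariant under the conjugation $J$ (with $J\cam J=\cam'$) and admitting the description $\cp=\overline{\D^{-1/4}\cam'_{+}\xi}$, is simultaneously the natural cone of the standard von Neumann algebra $(\cam',\ch,\xi)$; and that for any von Neumann algebra in standard form the map $\zeta\mapsto\langle\,{\bf\cdot}\,\zeta,\zeta\rangle$ is a bijection from $\cp$ onto the set of normal positive linear functionals of the algebra (see, {\it e.g.}, \cite{SZ}). Applying the latter to $\cam'$ and to the functional $a'\mapsto\langle a'\eta,\eta\rangle$, I would let $|\eta|\in\cp$ be its unique preimage; for $\eta=0$ this yields $|\eta|=0$, in agreement with the statement.

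Then I would construct $v_\eta$. From $\langle a'|\eta|,|\eta|\rangle=\langle a'\eta,\eta\rangle$ for $a'\in\cam'$, polarisation gives $\|b'|\eta|\|=\|b'\eta\|$ for all $b'\in\cam'$, so $b'|\eta|\mapsto b'\eta$ is a well-defined linear isometry of $\cam'|\eta|$ onto a dense subspace of $[\cam'\eta]$; I would take $v_\eta$ to be its continuous extension to $[\cam'|\eta|]$, extended by $0$ on $[\cam'|\eta|]^{\perp}$. By construction $v_\eta$ is a partial isometry with initial projection $p_{|\eta|}$, so $v_\eta^*v_\eta=p_{|\eta|}$, and $v_\eta|\eta|=\eta$. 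Since $[\cam'|\eta|]$ and its orthogonal complement are both $\cam'$-invariant and $v_\eta(b'a'|\eta|)=(b'a')\eta=b'\,v_\eta(a'|\eta|)$, a routine density/boundedness argument gives $v_\eta b'=b'v_\eta$ for every $b'\in\cam'$, hence $v_\eta\in(\cam')'=\cam$. Uniqueness is then quick: if $\eta=w|\eta|$ with $w\in\cam$ and $w^*w=p_{|\eta|}$, then $\ker(w)=\ker(w^*w)=(I-p_{|\eta|})\ch=\ker(v_\eta)$, while $w(a'|\eta|)=a'(w|\eta|)=a'\eta=v_\eta(a'|\eta|)$ on the dense subspace $\cam'|\eta|$ of $p_{|\eta|}\ch$, so $w=v_\eta$.

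The step I expect to carry all the weight is the input from the theory of the positive self-polar cone: the identification of $\cp$ with the natural cone of $\cam'$, together with the bijectivity of $\zeta\mapsto\langle\,{\bf\cdot}\,\zeta,\zeta\rangle$ onto the normal positive functionals (injectivity being what makes $|\eta|$ unique, surjectivity what makes it exist). Everything downstream — that the tautological intertwiner is bounded, lies in $\cam$, and carries the stated support projection — is elementary. If one preferred not to quote the cone package wholesale, these facts would have to be reproved along the usual lines: self-duality of $\cp$ for injectivity, and approximation of a normal positive functional on $\cam'$ by vector functionals $\langle\,{\bf\cdot}\,a'\xi,a'\xi\rangle$, $a'\in\cam'$, combined with the contraction estimates on $\cp$, for surjectivity.
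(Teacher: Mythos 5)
Your proof is correct and is essentially the canonical argument: you pin down $|\eta|$ as the unique vector of $\cp$ inducing the functional $\langle\,{\bf\cdot}\,\eta,\eta\rangle$ on $\cam'$ (using that $\cp$ is simultaneously the natural cone of $\cam'$ and that $\cp\ni\zeta\mapsto\langle\,{\bf\cdot}\,\zeta,\zeta\rangle$ is a bijection onto the normal positive functionals), and then obtain $v_\eta$ as the closed extension of $a'|\eta|\mapsto a'\eta$, checking it lies in $(\cam')'=\cam$ with $v_\eta^*v_\eta=p_{|\eta|}$ and proving uniqueness by the forced equality of the two vector functionals on $\cam'$. The paper gives no proof of Proposition \ref{szcon}, referring instead to \cite{SZ}, E.10.20, and your argument is the one found in that reference, so nothing needs to be added.
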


We collect some results, which will come in useful in the next sections.
\begin{lem}
\label{cif0}
For $\eta\in\ch$ and $v\in\cam$ an isometry, we have $|v\eta|=|\eta|$.
\end{lem}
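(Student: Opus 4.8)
The plan is to use the polar decomposition of the vector $\eta$ furnished by Proposition \ref{szcon} together with the uniqueness clause there. Write $\eta=v_\eta|\eta|$ with $|\eta|\in\cp$, $v_\eta\in\cam$, and $v_\eta^*v_\eta=p_{|\eta|}$ the projection onto $[\cam'|\eta|]$. Apply the isometry $v\in\cam$ to get $v\eta=(vv_\eta)|\eta|$. The idea is that $(vv_\eta)|\eta|$ is again a polar decomposition of $v\eta$ of the type described in Proposition \ref{szcon}, with the \emph{same} positive part $|\eta|\in\cp$; then the uniqueness statement forces $|v\eta|=|\eta|$.

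To make this work I must check the two defining conditions for the partial isometry piece. First, $vv_\eta\in\cam$, which is immediate since $\cam$ is an algebra. Second, I need $(vv_\eta)^*(vv_\eta)=p_{|v\eta|}$, i.e. that $(vv_\eta)^*(vv_\eta)$ is the projection onto $[\cam'|\eta|]$ — note that a priori the target is the projection onto $[\cam'v\eta]$, but since the candidate positive part is $|\eta|$ this is exactly $[\cam'|\eta|]$, consistent with uniqueness. Now $(vv_\eta)^*(vv_\eta)=v_\eta^*(v^*v)v_\eta=v_\eta^*v_\eta=p_{|\eta|}$ because $v$ is an isometry, $v^*v=I$. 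So $vv_\eta$ is a partial isometry in $\cam$ with initial projection $p_{|\eta|}$, and $(vv_\eta)|\eta|=v\eta$ with $|\eta|\in\cp$. By the uniqueness in Proposition \ref{szcon}, the positive part of $v\eta$ is $|\eta|$, that is $|v\eta|=|\eta|$.

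The one point that needs a small remark is that Proposition \ref{szcon} as stated gives existence and uniqueness of the triple $(v_\eta,|\eta|)$ under the constraint $v_\eta^*v_\eta=p_{|\eta|}$; one should observe that the projection $p_{|\eta|}$ onto $[\cam'|\eta|]$ depends only on the positive part, so the uniqueness is genuinely a uniqueness of $|\eta|$ (the positive part) together with the associated partial isometry. The mild obstacle, then, is purely bookkeeping: verifying that $vv_\eta$ satisfies precisely the normalisation demanded in the proposition (initial space equal to $[\cam'|\eta|]$), and that the constraint is compatible, so that the uniqueness clause applies verbatim. No Tomita–Takesaki machinery beyond Proposition \ref{szcon} is needed.

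\begin{proof}
By Proposition \ref{szcon}, write $\eta=v_\eta|\eta|$ with $|\eta|\in\cp$, $v_\eta\in\cam$, and $v_\eta^*v_\eta=p_{|\eta|}$, the self-adjoint projection onto $[\cam'|\eta|]$. Put $w:=vv_\eta\in\cam$. Then $w|\eta|=v\eta$, $|\eta|\in\cp$, and, since $v$ is an isometry,
$$
w^*w=v_\eta^*v^*vv_\eta=v_\eta^*v_\eta=p_{|\eta|}\,,
$$
the projection onto $[\cam'|\eta|]$. Hence $(w,|\eta|)$ satisfies all the conditions characterising the polar decomposition of the vector $v\eta$ in Proposition \ref{szcon}. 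By the uniqueness asserted there, the positive part of $v\eta$ coincides with $|\eta|$, that is $|v\eta|=|\eta|$.
\end{proof}
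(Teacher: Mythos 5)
Your proof is correct and follows essentially the same route as the paper: apply $v$ to the polar decomposition $\eta=v_\eta|\eta|$ from Proposition \ref{szcon}, use $v^*v=I$ to get $(vv_\eta)^*(vv_\eta)=p_{|\eta|}$, and invoke the uniqueness clause. Your extra remark on how the normalisation $w^*w=p_{|\eta|}$ interacts with the uniqueness statement is a fair clarification but not a departure from the paper's argument.
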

\begin{proof}
We have $v\eta=vv_\eta|\eta|$, and $(vv_\eta)^*(vv_\eta)=v_\eta^*v^*vv_\eta=v_\eta^*v_\eta=p_{|\eta|}$ because $v$ is an isometry. The assertion now follows by Proposition \ref{szcon}.
\end{proof}

For the remaining part of the section, we consider $u\in\cu(\ch)$ such that $u\cam u^*=\cam$ and $u\xi=\xi$.
\begin{lem}
\label{cif}
We have $u\cp=\cp$.
\end{lem}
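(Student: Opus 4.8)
The plan is to reduce the statement to the single assertion $uJu^{*}=J$, where $J$ is the modular conjugation of $(\cam,\ch,\xi)$, and then to read off the conclusion from the explicit generating set of $\cp$. Indeed, suppose we have shown $uJu^{*}=J$. For every $a\in\cam$, inserting $u^{*}u$ between the factors and using $u\xi=\xi$ together with $uau^{*}\in\cam$, we get
$$
u\,\bigl(JaJ\,a\,\xi\bigr)=(uJu^{*})(uau^{*})(uJu^{*})(uau^{*})(u\xi)=J(uau^{*})J\,(uau^{*})\,\xi\,,
$$
which again has the form $JbJb\xi$ with $b:=uau^{*}\in\cam$. Since $a\mapsto uau^{*}$ is a bijection of $\cam$, the set $\{JaJa\xi\mid a\in\cam\}$ is carried onto itself by $u$; as $u$ is a homeomorphism of $\ch$, so is its closure $\cp$, and hence $u\cp=\cp$.

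Thus the real point is $uJu^{*}=J$. First I would check that $u$ commutes with the Tomita involution $S$. Since $u$ is unitary with $u\cam u^{*}=\cam$ (hence also $u^{*}\cam u=\cam$) and $u\xi=\xi$, one has $ua\xi=(uau^{*})\xi\in\cam\xi$ for every $a\in\cam$, so $u$ maps the core $\cam\xi$ of $S$ onto itself; and the computation
$$
S_{o}\bigl(u\,a\xi\bigr)=S_{o}\bigl((uau^{*})\xi\bigr)=(uau^{*})^{*}\xi=u\,a^{*}u^{*}\xi=u\,a^{*}\xi=u\,S_{o}(a\xi)
$$
shows $uS_{o}=S_{o}u$ on this core. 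Since $u$ is bounded with bounded inverse and leaves the core invariant, it follows (apply $u\oplus u$ to the graph of $S_{o}$ and pass to closures) that $uS=Su$, i.e.\ $uSu^{*}=S$. Now apply $\ad_{u}$ to the polar decomposition $S=J\D^{1/2}$: the right-hand side becomes $(uJu^{*})(u\D^{1/2}u^{*})$, with $uJu^{*}$ an antiunitary involution and $u\D^{1/2}u^{*}$ a positive self-adjoint operator, so this is itself a polar decomposition of $uSu^{*}=S$. Uniqueness of the polar decomposition of the closed antilinear operator $S$ then forces $uJu^{*}=J$ (and, in passing, $u\D^{1/2}u^{*}=\D^{1/2}$, so $u$ also commutes with $\D$ and with all $\D^{\imath t}$).

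I do not expect any genuine obstacle here; the only step needing a small amount of care is the passage from the identity on the core $\cam\xi$ to the identity for the closures, which is routine. Alternatively, one could bypass polar decomposition and use the other description $\cp=\overline{\{\D^{1/4}a\xi\mid a\in\cam_{+}\}}$: once $u$ is known to commute with $\D^{1/4}$ one has $u\D^{1/4}a\xi=\D^{1/4}(uau^{*})\xi$ with $uau^{*}\in\cam_{+}$, giving $u\cp=\cp$ at once. The conceptual crux, in either approach, is simply the observation that the hypotheses $u\xi=\xi$ and $u\cam u^{*}=\cam$ are exactly what make $u$ commute with $S$, hence with $J$.
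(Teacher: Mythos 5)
Your argument is correct and follows essentially the same route as the paper: the paper simply quotes $[u,J]=0$ from \cite{NSZ}, Lemma 3.3, and then performs exactly your final computation $u\,(JaJa\xi)=J(uau^*)J(uau^*)\xi$ on the generating set of $\cp$, using that $a\mapsto uau^*$ is a bijection of $\cam$ and that $u$ is a homeomorphism of $\ch$. The only difference is that you prove the commutation of $u$ with $J$ yourself (via $uS_o=S_ou$ on the core $\cam\xi$, passage to closures to get $uSu^*=S$, and uniqueness of the polar decomposition of $S$) instead of citing it, and that part of your argument is also sound.
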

\begin{proof}
By \cite{NSZ}, Lemma 3.3, $[u,J]=0$. Therefore,
\begin{align*}
u\cp\equiv u\overline{\{JaJa\xi\mid a\in\cam\}}
=&\overline{\{J(uau^*)J(uau^*)\xi\mid a\in\cam\}}\\
=&\overline{\{JaJa\xi\mid a\in\cam\}}=\cp\,.
\end{align*}
\end{proof}
\begin{lem}
\label{cif1}
For $\eta\in\ch$, we have $u|\eta|=|u\eta|$.
\end{lem}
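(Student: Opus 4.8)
The plan is to combine Lemma \ref{cif0}, Lemma \ref{cif}, and the uniqueness part of the polar decomposition in Proposition \ref{szcon}. Given $\eta\in\ch$, write its canonical polar decomposition $\eta=v_\eta|\eta|$ with $|\eta|\in\cp$, $v_\eta\in\cam$, and $v_\eta^*v_\eta=p_{|\eta|}$ the projection onto $[\cam'|\eta|]$. The strategy is to produce a decomposition of $u\eta$ of exactly this form, with $u|\eta|$ in the role of the positive part, and then invoke uniqueness to conclude that $u|\eta|=|u\eta|$.

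First I would observe $u\eta=(uv_\eta u^*)(u|\eta|)$. Here $u v_\eta u^*\in\cam$ since $u\cam u^*=\cam$, and $u|\eta|\in\cp$ by Lemma \ref{cif} (using $u\cp=\cp$). It remains to check the projection condition: $(uv_\eta u^*)^*(uv_\eta u^*)=u v_\eta^* v_\eta u^*=u p_{|\eta|}u^*$, and I must argue that $u p_{|\eta|}u^*$ is precisely $p_{u|\eta|}$, the projection onto $[\cam'\,u|\eta|]$. Since $u\cam' u^*=\cam'$ (taking commutants of $u\cam u^*=\cam$), we get $[\cam'\,u|\eta|]=[u\cam' u^*\,u|\eta|]=[u\,\cam'|\eta|]=u[\cam'|\eta|]$, so conjugating the projection $p_{|\eta|}$ onto $[\cam'|\eta|]$ by $u$ yields the projection onto $u[\cam'|\eta|]=[\cam'\,u|\eta|]$, i.e.\ $u p_{|\eta|}u^*=p_{u|\eta|}$. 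Hence $u\eta=(uv_\eta u^*)(u|\eta|)$ is a decomposition satisfying all three defining conditions of Proposition \ref{szcon}, so by uniqueness $|u\eta|=u|\eta|$ and $v_{u\eta}=uv_\eta u^*$.

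I do not expect a serious obstacle here; the only point requiring a little care is the commutation $[u,J]=0$, which was already used inside the proof of Lemma \ref{cif} (via \cite{NSZ}, Lemma 3.3) and underlies $u\cp=\cp$, so it is available. The mild subtlety is making sure the "closed subspace generated" operation commutes with the unitary $u$, i.e.\ $u\,\overline{\cam'|\eta|}=\overline{u\cam'|\eta|}=\overline{\cam'u|\eta|}$; this is immediate since $u$ is a homeomorphism of $\ch$ and $\cam'$ is $u$-invariant. With those facts in hand the argument is a one-line application of uniqueness of the polar decomposition.
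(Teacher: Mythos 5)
Your proposal is correct and follows essentially the same route as the paper: write $u\eta=(uv_\eta u^*)(u|\eta|)$, use Lemma \ref{cif} to get $u|\eta|\in\cp$, verify that $uv_\eta^*v_\eta u^*=p_{u|\eta|}$ via the identity $u[\cam'|\eta|]=[\cam'(u|\eta|)]$, and conclude by the uniqueness in Proposition \ref{szcon}. Your direct observation that conjugating $p_{|\eta|}$ by $u$ yields the projection onto $u[\cam'|\eta|]$ is just a slightly more compact phrasing of the range argument the paper spells out.
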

\begin{proof}
We have $u\eta=uv_\eta|\eta|=(uv_\eta u^*)(u|\eta|)$. Since $u|\eta|\in\cp$ by Lemma \ref{cif}, thanks to Proposition \ref{szcon} it is enough to show that $\cam\ni uv_\eta^*v_\eta u^*=p_{u|\eta|}$.

We first get
$$
u[\cam'|\eta|]=[u(\cam')u^*(u|\eta|)]=[\cam'(u|\eta|)]\,,
$$
and thus $u[\cam'|\eta|]^\perp=[\cam'(u|\eta|)]^\perp$ because $u$ is unitary. 

Consequently,
$$
uv_\eta^*v_\eta u^*[\cam'(u|\eta|)]=uv_\eta^*v_\eta[\cam'|\eta|]=u[\cam'|\eta|]=[\cam'(u|\eta|)]\,,
$$
which means $uv_\eta^*v_\eta u^*\geq p_{u|\eta|}$. Analogously, $uv_\eta^*v_\eta u^*[\cam'(u|\eta|)]^\perp=[\cam'(u|\eta|)]^\perp$, which leads to $uv_\eta^*v_\eta u^*=p_{u|\eta|}$.
\end{proof}
\begin{lem}
\label{cif2}
If $u$ is mean-ergodic, then $\cp\bigcap E_1^u\ch=\overline{\br_+}\xi$.
\end{lem}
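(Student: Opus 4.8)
The claim is that if $u\in\cu(\ch)$ satisfies $u\cam u^*=\cam$, $u\xi=\xi$, and $u$ is mean-ergodic (so $\dim E_1^u\ch=1$), then $\cp\cap E_1^u\ch=\overline{\br_+}\,\xi$. Since $\dim E_1^u\ch=1$ and $\xi\in E_1^u\ch$ (as $u\xi=\xi$), we have $E_1^u\ch=\bc\xi$, so the only thing to prove is which scalar multiples of $\xi$ lie in $\cp$; equivalently, $\cp\cap\bc\xi=\overline{\br_+}\,\xi$.

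The inclusion $\overline{\br_+}\,\xi\subset\cp\cap E_1^u\ch$ is immediate: $\xi\in\cp$ because $\xi=\D^{1/4}I\xi$ with $I\in\cam_+$ (equivalently $\xi=JIJI\xi$), $\cp$ is a cone, and $0\in\cp$ as a closed cone; and $\xi\in E_1^u\ch$ since $u\xi=\xi$. For the reverse inclusion, take $\eta\in\cp\cap E_1^u\ch$. Then $\eta=\l\xi$ for some $\l\in\bc$, and I must show $\l\in\overline{\br_+}$. The plan is to use the order structure of the self-dual cone: $\cp$ is self-dual, so $\eta\in\cp$ iff $\langle\eta,\z\rangle\geq0$ for all $\z\in\cp$. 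Taking $\z=\xi\in\cp$ gives $\langle\l\xi,\xi\rangle=\l\|\xi\|^2\geq0$, hence $\l\geq0$. This already finishes it, and in fact doesn't even use mean-ergodicity beyond reducing $E_1^u\ch$ to a line; but the intended argument surely leans on Lemma~\ref{cif}, so let me phrase it that way too: by Lemma~\ref{cif}, $u\cp=\cp$, so $\cp\cap E_1^u\ch$ is a $u$-invariant self-dual sub-cone of the line $E_1^u\ch=\bc\xi$, and a cone inside a complex line that is invariant and self-dual against $\xi$ must be $\overline{\br_+}\,\xi$.

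So the proof I would write: first note $\xi\in E_1^u\ch$, hence $E_1^u\ch=\bc\,\xi$ by mean-ergodicity; record $\xi\in\cp$ from the definition $\cp=\overline{\{\D^{1/4}a\xi\mid a\in\cam_+\}}$ with $a=I$. Then $\overline{\br_+}\,\xi\subset\cp$ since $\cp$ is a closed convex cone. Conversely, any $\eta\in\cp\cap E_1^u\ch$ is of the form $\l\xi$, and self-duality of $\cp$ together with $\xi\in\cp$ forces $\l\|\xi\|^2=\langle\eta,\xi\rangle\geq0$, i.e. $\l\in\overline{\br_+}$.

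**Main obstacle.** There is essentially no obstacle here; the content of the lemma is the mean-ergodicity hypothesis collapsing $E_1^u\ch$ to $\bc\xi$, after which self-duality of $\cp$ does the rest in one line. The only point requiring a little care is making sure one is allowed to pair with $\xi\in\cp$, i.e. genuinely invoking self-duality of the canonical cone (stated in Section~\ref{thecone}); and one should be slightly careful that $\overline{\br_+}$ means $[0,+\infty)$ so that $0\cdot\xi=0\in\cp$ is included, consistent with $\cp$ being closed. If one prefers to route through $u$-invariance rather than self-duality, the step "a $u$-invariant self-dual cone in the complex line $\bc\xi$ is $\overline{\br_+}\xi$" still reduces to the same positivity computation $\langle u^n\eta,\xi\rangle=\langle\eta,\xi\rangle\geq0$, so nothing is gained; I would go with the direct self-duality argument.

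\begin{proof}
Since $u\xi=\xi$, we have $\xi\in E_1^u\ch$, and mean-ergodicity of $u$ gives $\dim E_1^u\ch=1$, whence $E_1^u\ch=\bc\,\xi$. Moreover $\xi=\D^{1/4}I\xi$ with $I\in\cam_+$, so $\xi\in\cp$; as $\cp$ is a closed convex cone, $\overline{\br_+}\,\xi\subset\cp\bigcap E_1^u\ch$.

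Conversely, let $\eta\in\cp\bigcap E_1^u\ch$. Then $\eta=\l\xi$ for some $\l\in\bc$. Since $\cp$ is self-dual and $\xi\in\cp$, we get $\l\|\xi\|^2=\langle\eta,\xi\rangle\geq0$, hence $\l\in\overline{\br_+}$ and $\eta\in\overline{\br_+}\,\xi$. Therefore $\cp\bigcap E_1^u\ch=\overline{\br_+}\,\xi$.
\end{proof}
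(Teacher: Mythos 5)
Your proof is correct, but it takes a different route from the paper's. Both arguments begin the same way: mean-ergodicity plus $u\xi=\xi$ collapse $E_1^u\ch$ to the line $\bc\xi$. From there you invoke the self-duality of the canonical cone: since $\xi\in\cp$, any $\eta=\l\xi\in\cp$ must satisfy $\langle\eta,\xi\rangle=\l\|\xi\|^2\geq0$, forcing $\l\in\overline{\br_+}$. The paper instead stays inside the polar-decomposition framework of Proposition \ref{szcon}: writing $\eta=z\xi=(e^{\imath\,{\rm arg}(z)}\idd_\ch)(|z|\,\xi)$ and appealing to the uniqueness of the decomposition $\eta=v_\eta|\eta|$, it identifies $|\eta|=|z|\xi$ and $v_\eta=e^{\imath\,{\rm arg}(z)}\idd_\ch$, so that $\eta\in\cp$ (i.e. $\eta=|\eta|$) forces $z\in\overline{\br_+}$. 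Your argument is shorter and uses only the defining self-duality of $\cp$, with no need for Proposition \ref{szcon}; the paper's version has the advantage of being uniform with Lemmas \ref{cif0} and \ref{cif1} and of exhibiting explicitly the modulus and the phase $v_\eta$, which is the piece of information actually exploited later in the proof of Theorem \ref{mier}, where the solution vector is written as $\eta=c\,v_\eta\xi_{\om_o}$ with $v_\eta\in\pi_{\om_o}(\ga)''$. Your remark that mean-ergodicity is used only to reduce $E_1^u\ch$ to $\bc\xi$ is accurate and matches the paper's use of the hypothesis.
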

\begin{proof}
Since $\eta\in E_1^u\ch\iff\eta=z\xi$ for some $z\in\bc$, the unicity of the polar decomposition of $(e^{\imath{\rm arg}(z)}\idd_H)(|z|\xi)=\eta=v_\eta|\eta|$ leads to $|\eta|=|z|\xi$ and $v_\eta=e^{\imath{\rm arg}(z)}\idd_H$.
\end{proof}

\section{The noncommutative skew-product}
\label{anspr}

The aim of the present section is to provide the quantum generalisation of the Anzai skew-product, first appeared in \cite{A}, see also \cite{OP, DFGR}, for the quantum case on the noncommutative torus.

We start with a $C^*$-dynamical system $(\mathfrak{A}, \theta)$ made of the unital  $C^*$-algebra $\ga$, the automorphism 
$\th\in\aut(\ga)$ of $\mathfrak{A}$. We next consider a second automorphism $\alpha\in\aut(\ga)$.

To this end, we start by extending the action of $\theta$ on $\ga$ to an automorphism acting on the crossed product
$\mathfrak{A} \rtimes_{\alpha}\mathbb{Z}$, which is well known when $\theta$ and $\alpha$ commute, see {\it e.g.} \cite{CH}, Section 2.1. The construction  is also
possible in more generality, for instance  when $\theta$ and $\alpha$ commute up to a unitary
in the $C^*$-tensor category ${\rm End}(\mathfrak{A})$.
More explicitly, we assume that there exists $u\in\mathcal{U}(\ga)$ such that
\begin{equation}
\label{inter}
u(\a\circ\th)(a)=(\th\circ\a)(a)u\,,\quad a\in\ga\,.
\end{equation}

Now, associated with any intertwining unitary $u$ as above,
it is possible to define an automorphism $\Phi_{\theta, u}\in\rm{Aut}(\mathfrak{A} \rtimes_{\alpha}\mathbb{Z})$, whose action on the generators is 
\begin{equation}
\label{han}
\begin{split}
&\F_{\th,u}(a):=\th(a)\,,\quad a\in\ga\,,\\
&\F_{\th,u}(V):=uV\,.
\end{split}
\end{equation}

For the convenience of the reader, we include a proof that the above formulae do in fact yield a well-defined automorphism
of $\mathfrak{A} \rtimes_{\alpha}\mathbb{Z}$. Before doing that, we also note that 
$$
\F_{\th,u}(V^*)=V^*u^*=\a^{-1}(u^*)V^*\,.
$$
\begin{prop}
The map $\F_{\th,u}$ in \eqref{han} uniquely extends to a unital $*$-automorphism denoted, with a slight abuse of notation, again with $\F_{\th,u}$, whose inverse is
$\F_{\th,u}^{-1}=\F_{\th^{-1},\th^{-1}(u^*)}$.
\end{prop}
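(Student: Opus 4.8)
The plan is to check that the assignments in \eqref{han} respect all the defining relations of $\mathfrak{A}\rtimes_\alpha\mathbb{Z}$, so that by the universal property they extend to a unital $*$-endomorphism, and then to exhibit a two-sided inverse. First I would verify that $\Phi_{\th,u}$ preserves the covariance relation $VaV^*=\alpha(a)$. Applying the candidate map to the left-hand side gives $\Phi_{\th,u}(V)\Phi_{\th,u}(a)\Phi_{\th,u}(V)^*=uV\th(a)V^*u^*=u\,\alpha(\th(a))\,u^*$, and the intertwining hypothesis \eqref{inter} rewritten as $u\,(\alpha\circ\th)(a)\,u^*=(\th\circ\alpha)(a)$ turns this into $\th(\alpha(a))=\Phi_{\th,u}(\alpha(a))$, as required. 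Since $\th$ is a unital $*$-automorphism of $\ga$ and $uV$ is a unitary in $\mathfrak{A}\rtimes_\alpha\mathbb{Z}$ (product of two unitaries), the pair $(\th,uV)$ is a covariant pair for $(\ga,\alpha,\bz)$ in the sense needed to invoke the universal property of the maximal $C^*$-crossed product; hence $\Phi_{\th,u}$ extends to a unique unital $*$-homomorphism of $\mathfrak{A}\rtimes_\alpha\mathbb{Z}$ into itself, automatically $*$-preserving because it sends the generating relations to themselves.

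Next I would identify the inverse. The natural guess, dictated by the formulas, is $\Psi:=\Phi_{\th^{-1},\,\th^{-1}(u^*)}$, which is itself a well-defined unital $*$-endomorphism by the same argument provided one checks that $\th^{-1}(u^*)$ intertwines $\alpha\circ\th^{-1}$ and $\th^{-1}\circ\alpha$; this last identity is obtained by applying $\th^{-1}$ to \eqref{inter} and rearranging, using that $\th$ commutes with nothing a priori but that $\th^{-1}$ is an automorphism, so $\th^{-1}(u)\,(\th^{-1}\circ\alpha\circ\th\circ\th^{-1})(a)=\cdots$, i.e. one rewrites \eqref{inter} with $a$ replaced by $\th^{-1}(a)$ and then applies $\th^{-1}$ throughout. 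Granting this, it remains to check $\Psi\circ\Phi_{\th,u}=\id$ and $\Phi_{\th,u}\circ\Psi=\id$ on the generators $a\in\ga$ and $V$, since both composites are $*$-endomorphisms agreeing on a generating set. On $\ga$ both composites clearly act as $\th^{-1}\circ\th=\id$ (resp. $\th\circ\th^{-1}=\id$). On $V$ we compute, e.g., $\Psi(\Phi_{\th,u}(V))=\Psi(uV)=\Psi(u)\Psi(V)=\th^{-1}(u)\,\th^{-1}(u^*)V=V$, using $\Psi(V)=\th^{-1}(u^*)V$ and multiplicativity; the other composite is symmetric, using $\Phi_{\th,u}(\th^{-1}(u^*))=\th(\th^{-1}(u^*))=u^*$ so that $\Phi_{\th,u}(\Psi(V))=\Phi_{\th,u}(\th^{-1}(u^*)V)=u^*\cdot uV=V$. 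Hence $\Phi_{\th,u}$ is an automorphism with $\Phi_{\th,u}^{-1}=\Phi_{\th^{-1},\th^{-1}(u^*)}$.

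The only genuinely delicate point is the bookkeeping in the intertwining relations: one must be careful that \eqref{inter} is an identity of the form $u(\alpha\circ\th)(a)=(\th\circ\alpha)(a)u$ and not the superficially similar $u(\th\circ\alpha)(a)=\cdots$, and correspondingly that the inverse uses $\th^{-1}(u^*)$ rather than, say, $\th^{-1}(u)$ or $\alpha^{-1}(u^*)$; tracking which automorphism is applied where, and in which order the two unitaries multiply, is where an error would creep in. Everything else — the $*$-preservation, the passage from generators to the whole algebra, the uniqueness of the extension — is a routine invocation of the universal property of the maximal crossed product, which is why it is safe to state it tersely. I would also remark in passing, as the excerpt already notes, that $\Phi_{\th,u}(V^*)=V^*u^*=\alpha^{-1}(u^*)V^*$, which makes the computation of $\Phi_{\th,u}\circ\Psi$ on $V^*$ (if one prefers to check it there) equally immediate.
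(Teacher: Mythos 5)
Your proposal is correct and follows essentially the same route as the paper: verify via \eqref{inter} that the assignments respect the covariance relation, invoke the universal property of the crossed product, and observe that $\th^{-1}$ satisfies the analogue of \eqref{inter} with respect to $\th^{-1}(u^*)$ so that $\F_{\th^{-1},\th^{-1}(u^*)}$ is a two-sided inverse. The only differences are cosmetic: you check covariance in the conjugation form and spell out the composite computations on the generators, which the paper leaves implicit.
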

\begin{proof}
Thanks to the universal property enjoyed by crossed products and the definition for $\F_{\th,u}(V^*)$ in \eqref{han}, in order to obtain a unital $*$-endomorphism, for $a\in\ga$ we should satisfy
$$
\F_{\th,u}(V)\F_{\th,u}(a)\equiv\F_{\th,u}(Va)=\F_{\th,u}(\a(a)V)\equiv\F_{\th,u}(\a(a))\F_{\th,u}(V)\,.
$$
Therefore, we have to check $\F_{\th,u}(V)\F_{\th,u}(a)=\F_{\th,u}(\a(a))\F_{\th,u}(V)$, $a\in\ga$.

Indeed, by the assumption \eqref{inter},
\begin{align*}
\F_{\th,u}(V)&\F_{\th,u}(a)=uV\th(x)=u(\a\circ\th)(a)V\\
=&(\th\circ\a)(a)uV=\F_{\th,u}(\a(a))\F_{\th,u}(V)\,.
\end{align*}

We now notice that $\th^{-1}$ satisfies the same condition \eqref{inter} w.r.t. the unitary $\th^{-1}(u^*)$:
$$
\th^{-1}(u^*)(\a\circ\th^{-1})(a)=(\th^{-1}\circ\a)(a)\th^{-1}(u^*)\,,\quad a\in\ga\,,
$$
and therefore $\F_{\th^{-1},\th^{-1}(u^*)}$, defined as in \eqref{han}, provides another unital $*$-endomorphism of 
$\ga\rtimes_\a\bz$ which realises a left and right inverse of $\F_{\th,u}$.
\end{proof}
To ease the computations we will make in the sequel, we need to re-express powers $(uV)^n$, $n\in\mathbb{Z}$, in a more
convenient way, which is done below. 
\begin{lem}
\label{coo0}
With the notation established above, the equality $(uV)^n=u_nV^n$ holds  for every $n\in\mathbb{Z}$, where the double sequence
$\{u_n\mid n\in\mathbb{Z}\}\subset\mathfrak{A}$ is given by
\begin{equation}
\label{coo}
u_{n}=\left\{\begin{array}{ll}
                     I\,,\,\,\, n=0\,,&\\[1ex]
                      u\alpha(u)\ldots \alpha^{n-1}(u)\,,   \,\,\,
                      n>1\,,& \\[1ex]
                   \a^{-1}(u^*)\a^{-2}(u^*)\ldots\a^{n}(u^*)\,,  \,\,\,
                      n<0\,.&
                    \end{array}
                    \right.
\end{equation}
\end{lem}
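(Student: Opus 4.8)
The plan is to prove the identity $(uV)^n=u_nV^n$ by induction on $|n|$, handling $n>0$ and $n<0$ separately, the only tool being the crossed-product commutation relation $Va=\a(a)V$ for $a\in\ga$ (equivalently $V^{-1}a=\a^{-1}(a)V^{-1}$, and more generally $V^ka=\a^k(a)V^k$ for all $k\in\bz$). The cases $n=0$ and $n=1$ are immediate from the definition of $u_n$ in \eqref{coo}, since $u_0=I$ and $u_1=u$.

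For $n>1$, assume inductively that $(uV)^{n-1}=u_{n-1}V^{n-1}$ with $u_{n-1}=u\a(u)\cdots\a^{n-2}(u)$. Then
\[
(uV)^n=(uV)^{n-1}(uV)=u_{n-1}V^{n-1}uV=u_{n-1}\a^{n-1}(u)V^{n-1}V=u_{n-1}\a^{n-1}(u)V^n,
\]
and $u_{n-1}\a^{n-1}(u)=u\a(u)\cdots\a^{n-2}(u)\a^{n-1}(u)=u_n$ by \eqref{coo}, which closes the induction.

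For the negative powers I would first record the base case $n=-1$: exactly as in the computation carried out right before the lemma, $(uV)^{-1}=V^{-1}u^{-1}=V^*u^*=\a^{-1}(u^*)V^{-1}=u_{-1}V^{-1}$. Then, for $n<-1$, write $n=-m$ with $m\geq 2$ and assume $(uV)^{-(m-1)}=u_{-(m-1)}V^{-(m-1)}$ with $u_{-(m-1)}=\a^{-1}(u^*)\a^{-2}(u^*)\cdots\a^{-(m-1)}(u^*)$. Using $V^{-(m-1)}\a^{-1}(u^*)=\a^{-m}(u^*)V^{-(m-1)}$ one computes
\[
(uV)^{-m}=(uV)^{-(m-1)}(uV)^{-1}=u_{-(m-1)}V^{-(m-1)}\a^{-1}(u^*)V^{-1}=u_{-(m-1)}\a^{-m}(u^*)V^{-m},
\]
and $u_{-(m-1)}\a^{-m}(u^*)=\a^{-1}(u^*)\cdots\a^{-(m-1)}(u^*)\a^{-m}(u^*)=u_{-m}$ by \eqref{coo}, completing the induction.

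The argument is entirely elementary bookkeeping, so there is no genuine obstacle; the only point requiring a little care is to keep the exponent of $\a$ synchronized with the exponent of $V$ each time a unitary of $\ga$ is commuted past a power of $V$, and to verify that the resulting telescoping products reproduce verbatim the three expressions listed in \eqref{coo}.
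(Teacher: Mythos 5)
Your induction is correct and is precisely the "straightforward induction" that the paper itself invokes (and omits): the paper's proof of Lemma \ref{coo0} consists of the single remark that the verification is a routine induction using $Va=\a(a)V$. Your bookkeeping of the exponents, including the base case $(uV)^{-1}=\a^{-1}(u^*)V^{-1}$ taken from the computation preceding the lemma, matches the intended argument exactly.
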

\begin{proof}
It is a straightforward induction, which can be safely left out.
\end{proof}
Analogously, we can write
\begin{equation}
\label{coo1}
(uV)^n=V^n\a^{-n}(u_n)\,,\quad n\in\bz\,.
\end{equation}

Suppose now that a state $\om_o$ is invariant under the action of $\th$, that is $\om_o\in\cs(\ga)^\th$. Therefore, we have a $C^*$-dynamical system $(\ga,\th,\om_o)$. Correspondingly, 
we want to exhibit the {\it skew-product dynamical system} $(\mathfrak{A} \rtimes_{\alpha}\mathbb{Z},\F_{\th,u},\om)$ where,  
$\om\in\cs(\mathfrak{A} \rtimes_{\alpha}\mathbb{Z})$ is simply defined by \eqref{stainv} as
$\om:=\om_o\circ E$.

Concerning the invariance of $\om$ w.r.t. $\F_{\th,u}$, we have
\begin{prop}
With the notations set above, the state $\omega\in \mathcal{S}(\mathfrak{A} \rtimes_{\alpha}\mathbb{Z})$ is $\F_{\th,u}$-invariant, i.e.
$\omega\circ\F_{\th,u}=\omega$.
\end{prop}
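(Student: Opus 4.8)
The plan is to verify the identity $\om(\F_{\th,u}(x)) = \om(x)$ on a dense subset of $\ga\rtimes_\a\bz$ and then invoke continuity of both states. By the Fourier expansion \eqref{cesexp}, it suffices to check the identity on elements of the form $V^n a$ with $a\in\ga$ and $n\in\bz$, since such elements span a dense subspace and both $\om$ and $\om\circ\F_{\th,u}$ are norm-continuous.

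First I would compute $\F_{\th,u}(V^n a)$. Using \eqref{han} and Lemma \ref{coo0}, we have $\F_{\th,u}(V^n a) = \F_{\th,u}(V)^n \F_{\th,u}(a) = (uV)^n\th(a) = u_n V^n \th(a)$. Applying \eqref{tga} to the state $\om = \om_o\circ E$, we get $\om(\F_{\th,u}(V^n a)) = \om(u_n V^n \th(a)) = \om(V^n \a^{-n}(u_n)\th(a))$, which by \eqref{tga} again equals $\om_o(\a^{-n}(u_n)\th(a))\,\d_{n,0}$. For $n\neq 0$ this vanishes, matching $\om(V^n a) = \om_o(a)\d_{n,0} = 0$. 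For $n=0$ we have $u_0 = I$, so the left side is $\om_o(\th(a))$, while the right side is $\om(a) = \om_o(a)$; these agree precisely because $\om_o\in\cs(\ga)^\th$ is $\th$-invariant.

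Thus $\om(\F_{\th,u}(V^n a)) = \om(V^n a)$ for all $n\in\bz$ and $a\in\ga$, and by linearity the two states agree on the dense $*$-subalgebra spanned by the $V^n a$; continuity then gives $\om\circ\F_{\th,u} = \om$ on all of $\ga\rtimes_\a\bz$. The only mild subtlety — and the step most worth stating carefully — is the bookkeeping that moves the $\ga$-valued coefficient across $V^n$ via $u_n V^n = V^n\a^{-n}(u_n)$ (equivalently \eqref{coo1}) before applying the conditional expectation, but this is entirely routine given the relations already established. There is no serious obstacle here; the essential input is simply the $\th$-invariance of $\om_o$, which handles the degree-zero component, while the expectation $E$ kills everything of nonzero degree automatically.
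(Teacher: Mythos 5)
Your proof is correct and follows essentially the same route as the paper's: check the identity on the spanning elements $V^na$ (the paper uses $aV^n$, an immaterial difference), apply Lemma \ref{coo0} together with \eqref{tga}, and use the $\th$-invariance of $\om_o$ for the degree-zero term. The extra pass through \eqref{coo1} to move the coefficient across $V^n$ is harmless bookkeeping, exactly as you note.
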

\begin{proof}
Notice that it is enough to check the equality on the generators of the form $aV^n$, $a\in\mathfrak{A}$ and $n\in\mathbb{Z}$.
On the one hand, by \eqref{tga} we have $\om(aV^n)=\om_o(a)\delta_{n,0}$. 
By Lemma \ref{coo0} and the invariance of $\om_o$ under $\th$, we also have 
\begin{align*}
\omega(\Phi_{\th,u}(aV^n))=&\omega(\Phi_{\th,u}(a)\Phi_{\th,u}(V^n))=\omega(\theta(a)(uV)^n)\\
=&\omega(\theta(a)u_nV^n)=\omega_o(\theta(a)u_n)\delta_{n,0}\\
=&\omega_o(\theta(a))\delta_{n,0}=\omega_o(a)\delta_{n,0}\,.
\end{align*}

Collecting everything together, we have
$$
\omega(\Phi_{\th,u}(aV^n))=\omega_o(a)\delta_{n,0}=\om(aV^n)\,,
$$
which ends the proof.
\end{proof}
To sum up, starting with a $C^*$-dynamical system $(\ga,\th,\om_o)$ and $\a\in\aut(\ga)$, such that
$\theta\circ\alpha$ and $\alpha\circ\theta$ are unitarily equivalent in ${\rm End}(\mathfrak{A})$ by the unitary $u\in\cu(\ga)$, we can define a  
skew-product dynamical system associated to $u$, extending the Anzai skew-product constructed in the classical case, and therefore providing the $C^*$-dynamical system 
$(\mathfrak{A} \rtimes_{\alpha}\mathbb{Z},\Phi_{\th,u},\om)$ for the associated  $C^*$-crossed product.

\section{A comparison with alpha-induction}
\label{aind}

The extensions considered above have already appeared in the literature with the name \emph{alpha-induction}, even if in some very particular cases arising from the theory of factor-subfactor inclusions. Alpha-induction is designed to produce extensions of endomorphisms in the context of quantum field theory or, more generally, subfactors as long as they belong to an appropriate braided tensor category. 
 
Let $(\mathcal{C},\otimes,\id)$ be a strictly associative $C^*$ tensor category. It is said to be {\it braided} if it is equipped with a unitary braiding $\varepsilon^+$, that is a family 
$$
\{\varepsilon^+_{X,Y} \in\text{Hom}(X\otimes Y,Y \otimes X)\mid X,Y \in \ob(\cc)\}
$$
of unitary isomorphisms
satisfying
\begin{itemize}
\item[(i)] {\it naturality}: for any $X,X^\prime,Y,Y^\prime\in\ob(\mathcal{C})$, $f\in \text{Hom}(X,X^\prime)$ and $g \in \text{Hom}(Y,Y^\prime)$,
\begin{equation}
\label{naturality}
(g\otimes f)\circ \varepsilon^+_{X,Y} =\varepsilon^+_{X^\prime,Y^\prime} \circ (f\otimes g)\,;
\end{equation}
\item[(ii)] {\it tensoriality}: for every $X,Y,Z\in\ob(\mathcal{C})$,
\begin{equation}
\label{tensoriality2}
\begin{split}
&\varepsilon^+_{X\otimes Y,Z} = (\varepsilon^+_{X,Z} \otimes \id_{Y} )  \circ (\id_{X} \otimes \varepsilon^+_{Y,Z})\,,\\
&\varepsilon^+_{X,Y \otimes Z} = (\id_Y \otimes \varepsilon^+_{X,Z}) \circ (\varepsilon^+_{X,Y} \otimes \id_Z)\,.
\end{split}
\end{equation}
\end{itemize}

Note that, if $\varepsilon^+$ is a unitary braiding for $(\mathcal{C},\otimes,\id)$, then also 
$$
\{\varepsilon^-_{X,Y}:=(\varepsilon^{+}_{Y,X})^*\mid X,Y \in\ob(\cc)\}
$$
is a unitary braiding for $(\mathcal{C},\otimes,\id)$.\\

In the rest of the present section, we assume that 
$$
I:=\idd_{\ch}\in\mathcal{N}\subset\mathcal{M}\subset\cb(\ch)
$$ 
is a factor-subfactor inclusion of von Neumann algebra acting on the separable Hilbert space $\ch$, with $\mathcal{N}$ and $\mathcal{M}$ of type $\ty{III}$.

Let $\xi\in\mathcal{H}$ be a standard vector for both $\mathcal{M}$ and $\mathcal{N}$ which exists by the Dixmier-Mar\'{e}chal Theorem ({\it cf.} \cite{DM}), $J_\xi^\mathcal{N}$ and $J_\xi^\mathcal{M}$ being the modular conjugations w.r.t. $\xi$ of $\mathcal{N}$ and $\mathcal{M}$, respectively.

The {\it canonical endomorphism} ({\it e.g.} \cite{LR}) $\gamma_\xi\in\ed(\mathcal{M})$ is defined for the subfactor $\mathcal{N}\subset\mathcal{M}$ as
$$
\gamma_\xi(m):=\ad\!{}_{(J_\xi^{\mathcal{N}}J_\xi^{\mathcal{M}})}(m)\,,\quad m\in\cam\,.
$$
If $\eta\in\mathcal{H}$ is another standard vector for both $\mathcal{N}$ and $\mathcal{M}$, there exists a unitary $u\in\cn$ ({\it cf.} \cite{LR}, Section 2)
such that 
$$
\gamma_\eta=\ad\!{}_{u}\circ\gamma_\xi\,.
$$

Since the reference vector $\xi\in\ch$ is fixed once for all, for the canonical endomorphism we write
$\gamma:=\gamma_\xi$.

Let $\hat\g\in\ed(\mathcal{N})$ be the restriction of $\gamma$ to $\mathcal{N}$. It is referred to as the dual canonical endomorphism, and provides a canonical endomorphism associated to the inclusion $\g(M)\subset N$.

Suppose that there is a tensor subcategory $\mathcal{C}$ of $\ed(\mathcal{N})$ which admits unitary braidings $\varepsilon^{\pm}$, is full, and contains as objects  the canonical endomorphism
$\hat\g$ and its sub-objects. In this situation, we say that the subfactor $\mathcal{N}\subset\mathcal{M}$ is \emph{braided}.
Then, for every $\sigma\in{\rm Obj}(\mathcal{C})$, we can consider its extensions to $\ed(\mathcal{M})$ via {\it alpha-induction} ({\it cf.} \cite{LR}, Proposition 3.9):
\begin{equation}
\label{alphainduc}
\alpha^{\pm}_\sigma:=\gamma^{-1}\circ\ad{}_{\varepsilon^\pm_{\sigma,\rho}}\circ \sigma\circ\gamma.
\end{equation}

In \cite{LR}, \eqref{alphainduc} is understood in the context of local nets, where $\sigma$ is a localizable, transportable endomorphism of the local net, and 
$\alpha^{\pm}_\sigma$ are endomorphisms of the quasi-local $C^*$-algebra of the local net. 

If $\mathcal{N}\subset\mathcal{M}$ has finite Jones index, in the  context of braided subfactors outlined above, 
\eqref{alphainduc} is known to be well defined and to yield $\alpha^{\pm}_\sigma\in\ed(\mathcal{M})$.

Since we are here interested in crossed products by the action of $\mathbb{Z}$, we show for completeness that alpha-induction from \eqref{alphainduc} is actually well defined and yields an element of $\ed(\mathcal{M})$, under the hypothesis that $\mathcal{N}\subset\mathcal{M}$ is an irreducible ({\it i.e.} $\mathcal{N}^\prime\bigcap\mathcal{M}=\mathbb{C}$), discrete, and braided inclusion. For such a purpose, we first recall the relevant definitions.

Suppose that there exists a normal faithful conditional expectation $E:\cam\to\cn$ of $\cam$ onto $\cn$, and consider the Jones projection $e_\mathcal{N}$ for 
$\mathcal{N}\subset \mathcal{M}$ associated to $E$.
Namely, fix the faithful normal state  $\omega_\xi\circ E:=\langle E(\,{\bf\cdot}\,)\xi,\xi\rangle$, and consider the standard vector $\Om\in\cp_\xi$ for 
$\mathcal{M}$ which induces $\omega_\xi\circ E$. The Jones projection $e_\mathcal{N}$ is defined as the self-adjoint
projection onto $\overline{\mathcal{N}\Omega}\subset\mathcal{H}$. 

A \emph{Pimsner-Popa basis} for $\mathcal{N}\subset \mathcal{M}$ w.r.t. $E$ is a countably family of elements $\{m_i\} \subset \mathcal{M}$, such that:
\begin{itemize}
\item[$(i)$] $m_i^* e_\mathcal{N} m_i$ are mutually orthogonal projections;
\item[$(ii)$] $\sum_i m_i^* e_\mathcal{N} m_i = I$, where the sum converges in the strong operator topology.
\end{itemize}
\begin{defin}
An irreducible subfactor $\mathcal{N}\subset\mathcal{M}$ of type $\ty{III}$ is said to be discrete if it has a (unique) normal faithful conditional expectation $E:\mathcal{M}\rightarrow\mathcal{N}$, and the dual canonical endomorphism $\hat\g$ is a countable direct sum of finite-index endomorphisms ({\it i.e.} $[\r_i(\cn):\cn]<+\infty$)
$$
\hat\g=\sum_i v_i\rho_i(\,{\bf\cdot}\,)v_i^*\,,
$$
where $v_i\in\mathcal{N}$ mutually orthogonal isometries satisfying $\sum_i v_i v_i^*=\id_\ch$, and the convergence is understood in the strong operator topology.
\end{defin}

\begin{prop}[{\it cf.} {\cite{DG}}]\label{prop:discrete} 
Let $\mathcal{N}\subset \mathcal{M}$ be an irreducible inclusion of type $\ty{III}$ factors. Then $\mathcal{N}\subset\mathcal{M}$ is discrete if and only if, there exists a, necessarily unique, normal, faithful conditional expectation $E:\mathcal{M}\rightarrow\mathcal{N}$, and a Pimsner-Popa basis $\{m_i\}_{i\in I}\subset\mathcal{M}$ w.r.t. $E$ whose elements 
fulfill 
$$
m_i n = \hat\g(n) m_i\,,\quad n\in \mathcal{N}\,,\,\,i\in I\,.
$$
\end{prop}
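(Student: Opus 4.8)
The plan is to prove the two implications of the equivalence separately, the common apparatus being the Jones basic construction $\mathcal{N}\subset\mathcal{M}\subset\mathcal{M}_1=\langle\mathcal{M},e_\mathcal{N}\rangle$ performed on the GNS space of the distinguished faithful normal state $\omega_\xi\circ E$ with its standard vector $\Om\in\cp_\xi$, together with the canonical endomorphism $\g$ (which, via modular conjugations, is defined for any inclusion carrying a normal faithful conditional expectation) and its corestriction $\hat\g=\g|_\mathcal{N}\in\ed(\mathcal{N})$, recalling that $\g(\mathcal{M})\subset\mathcal{N}$. Two preliminary remarks. First, since $\mathcal{N}\subset\mathcal{M}$ is irreducible a normal faithful conditional expectation onto $\mathcal{N}$ is unique whenever it exists -- two such would differ by a Radon--Nikodym cocycle lying in $\mathcal{N}'\cap\mathcal{M}=\mathbb{C}$ -- so the $E$ appearing on the two sides is literally the same map and the word \lq\lq necessarily'' is justified. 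Second, any $m\in\mathcal{M}$ with $mn=\hat\g(n)m$ for all $n\in\mathcal{N}$ also satisfies $nm^*=m^*\hat\g(n)$, whence $m^*m\in\mathcal{N}'\cap\mathcal{M}=\mathbb{C}I$ and $m^*e_\mathcal{N}m\in\mathcal{N}'\cap\mathcal{M}_1$; this relative commutant is the \lq\lq spectral algebra'' whose minimal projections will index the summands of $\hat\g$.

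For the implication \lq\lq discrete $\Rightarrow$ such a basis exists'': write $\hat\g=\sum_i v_i\rho_i(\,{\bf\cdot}\,)v_i^*$ with $v_i\in\mathcal{N}$ mutually orthogonal isometries, $\sum_i v_iv_i^*=I$ in the strong operator topology, and each $\rho_i\in\ed(\mathcal{N})$ of finite index, so that $v_i\in\text{Hom}(\rho_i,\hat\g)$. I will use the following Frobenius reciprocity fact, available for the \emph{finite-index} $\rho_i$: $\rho_i\prec\hat\g$ is equivalent to the existence of a nonzero $s_i\in\mathcal{M}$ with $s_in=\rho_i(n)s_i$ for all $n\in\mathcal{N}$, and such an $s_i$ can be produced from $v_i$ by the standard solutions of the conjugate equations for $\rho_i$, working inside the sub-bimodule of $L^2(\mathcal{M})$ carrying $\rho_i$ so that no global implementing isometry for $\g$ is needed. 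Put $m_i:=v_is_i\in\mathcal{M}$; then $\hat\g(n)m_i=\hat\g(n)v_is_i=v_i\rho_i(n)s_i=v_is_in=m_in$. It remains to verify that $\{m_i\}$ is a Pimsner--Popa basis w.r.t. $E$: orthogonality of the $m_i^*e_\mathcal{N}m_i$ follows from $v_i^*v_j=0$ ($i\neq j$) and $e_\mathcal{N}x\,e_\mathcal{N}=E(x)e_\mathcal{N}$; each $m_i^*e_\mathcal{N}m_i$ is a projection after the normalisation $s_i^*s_i=p_i$ (a projection in $\hat\g(\mathcal{N})'\cap\mathcal{N}$) forced by the conjugate equations, which is where finiteness of the index of $\rho_i$ enters quantitatively; and $\sum_i m_i^*e_\mathcal{N}m_i=I$ is inherited, in the strong operator topology, from $\sum_i v_iv_i^*=I$.

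For the converse, assume $E$ exists together with a Pimsner--Popa basis $\{m_i\}\subset\mathcal{M}$ with $m_in=\hat\g(n)m_i$. By the second preliminary remark $m_i^*m_i\in\mathbb{C}I$, so after rescaling by scalars the $m_i$ become isometries of $\mathcal{M}$ intertwining $\text{id}_\mathcal{N}$ with $\hat\g$, and the projections $f_i:=m_i^*e_\mathcal{N}m_i$ form a mutually orthogonal family in $\mathcal{N}'\cap\mathcal{M}_1$ with $\sum_if_i=I$ (strong operator topology). Cutting $L^2(\mathcal{M})$ by $f_i$ and using the standard correspondence between sub-bimodules of ${}_\mathcal{N}L^2(\mathcal{M})_\mathcal{N}$ and sub-endomorphisms of $\hat\g$ attaches to each $i$ an endomorphism $\rho_i\in\ed(\mathcal{N})$ and an isometry $v_i\in\mathcal{N}$ with $v_i\in\text{Hom}(\rho_i,\hat\g)$, in such a way that $\sum_if_i=I$ and $f_if_j=\delta_{ij}f_i$ translate into $\sum_iv_iv_i^*=I$ (strong operator topology) and the orthogonality of the ranges of the $v_i$; hence $\hat\g=\sum_iv_i\rho_i(\,{\bf\cdot}\,)v_i^*$. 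Finally each $\rho_i$ has finite index: $f_i=m_i^*e_\mathcal{N}m_i$ being a single \emph{bounded} projection forces, through the identity relating it to $s_i^*s_i$ and the latter to $d(\rho_i)^{-1}$ times a support projection, that $d(\rho_i)<+\infty$. Thus $\mathcal{N}\subset\mathcal{M}$ is discrete. A good deal of the structure theory invoked here -- the bimodule/endomorphism dictionary for possibly infinite-index discrete inclusions, and the identification of the minimal projections of $\mathcal{N}'\cap\mathcal{M}_1$ with the finite-index summands of $\hat\g$ -- is available in \cite{DG} (building on Izumi, Longo and Popa), so in practice this part consists in quoting those results and carrying out the translation above.

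The step I expect to be the main obstacle is exactly this converse, for the structural reason that $\mathcal{N}\subset\mathcal{M}$ need not have finite index: the Q-system and conjugate-equation machinery is literally available only on the finite-index sub-endomorphisms $\rho_i\prec\hat\g$, there is no single isometry implementing $\g$, and one must assemble the finite-index data piecewise while controlling the strong-operator convergence of the resulting infinite sums -- indeed the whole point of the equivalence is that a Pimsner--Popa basis plays, collectively, the role that a single implementing isometry plays in the finite-index case. Subordinate but still delicate are the atomicity of $\mathcal{N}'\cap\mathcal{M}_1$ (which must be deduced, not assumed, in the converse) and pinning down the normalisations so that \lq\lq mutually orthogonal projections summing to $I$'' comes out exactly; these rest on irreducibility $\mathcal{N}'\cap\mathcal{M}=\mathbb{C}$ and are treated in \cite{DG}.
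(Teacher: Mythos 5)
The paper does not actually prove this proposition: it is stated with the citation \cite{DG} and used as an imported result, so there is no in-paper argument to measure your proposal against. Your sketch follows the same route as that cited source (and the Izumi--Longo--Popa and Fidaleo--Isola machinery behind it) --- charged fields $s_i$ for the finite-index subsectors $\rho_i\prec\hat\g$ assembled into the Pimsner--Popa basis $m_i=v_is_i$, and conversely the projections $m_i^*e_{\mathcal N}m_i$ in $\mathcal N'\cap\mathcal M_1$ cutting $L^2(\mathcal M)$ into finite-index pieces of $\hat\g$ --- and is consistent with it, with the genuinely hard steps you identify (producing and normalising charged fields without a global conjugate for the inclusion, atomicity of $\mathcal N'\cap\mathcal M_1$, and finiteness of $d(\rho_i)$) being precisely the content of \cite{DG} that both you and the paper defer to.
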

In order to check that \eqref{alphainduc} yields a well-defined $\alpha_\sigma^{\pm}\in\ed(\mathcal{M})$ in the context of the present paper, we report the following
\begin{lem}
\label{welldefinedalpha}
Let $\mathcal{N}\subset \mathcal{M}$ be an irreducible inclusion of type $\ty{III}$ factors.
Suppose that there is a braided tensor subcategory $\mathcal{C}$ of $\ed(\mathcal{N})$ which is full and contains $\hat\g$.
Then for any $\sigma\in\ob(\mathcal{C})$,
\begin{equation*}
\ad\!{}_{\varepsilon^\pm_{\sigma,\hat\g}}\circ \sigma\circ\gamma(\mathcal{M})\subset \gamma(\mathcal{M}).
\end{equation*}
\end{lem}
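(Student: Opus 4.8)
The plan is to verify the stated inclusion on a strongly dense set of generators of $\g(\mathcal{M})$ sitting inside the larger subfactor $\mathcal{N}$, and then to pass to the whole algebra by normality. Since in this section the inclusion $\mathcal{N}\subset\mathcal{M}$ is discrete, Proposition \ref{prop:discrete} supplies the (unique) normal faithful conditional expectation $E\colon\mathcal{M}\to\mathcal{N}$ and a Pimsner--Popa basis $\{m_i\}_{i}\subset\mathcal{M}$ with $m_i n=\hat\g(n)m_i$ for all $n\in\mathcal{N}$. First I would expand $m=\sum_i m_i E(m_i^{*}m)$ (strong convergence, $E(m_i^{*}m)\in\mathcal{N}$) and apply the normal endomorphism $\g$; using $\g|_{\mathcal{N}}=\hat\g$ this shows that $\g(\mathcal{M})$ is generated, as a von Neumann algebra, by $\hat\g(\mathcal{N})$ together with the elements $\g(m_i)$. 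Applying $\g$ to $m_i n=\hat\g(n)m_i$ also yields $\hat\g^{2}(n)\g(m_i)=\g(m_i)\hat\g(n)$, that is $\g(m_i)\in\text{Hom}(\hat\g^{2},\hat\g)$; since $\cc$ is full and contains $\hat\g$, hence $\hat\g^{2}=\hat\g\otimes\hat\g$, each $\g(m_i)$ is a morphism of $\cc$, so the braidings $\eps^{\pm}$ are natural with respect to it.

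It then suffices to check that $\ad_{\eps^{\pm}_{\sigma,\hat\g}}\circ\sigma$ maps $\g(m_i)$ and $\hat\g(\mathcal{N})$ into $\g(\mathcal{M})$. For the generators $\g(m_i)$: by naturality \eqref{naturality} with $f=\id_{\sigma}$, $g=\g(m_i)$, and reading off $x_1\otimes x_2=x_1\sigma_1(x_2)$, one gets $\eps^{\pm}_{\sigma,\hat\g}\,\sigma(\g(m_i))=\g(m_i)\,\eps^{\pm}_{\sigma,\hat\g^{2}}$, while the second hexagon identity in \eqref{tensoriality2} with $X=\sigma$ and $Y=Z=\hat\g$ gives $\eps^{\pm}_{\sigma,\hat\g^{2}}=\hat\g(\eps^{\pm}_{\sigma,\hat\g})\,\eps^{\pm}_{\sigma,\hat\g}$; combining the two, $\ad_{\eps^{\pm}_{\sigma,\hat\g}}(\sigma(\g(m_i)))=\g(m_i)\,\hat\g(\eps^{\pm}_{\sigma,\hat\g})$, which lies in $\g(\mathcal{M})$ because $\g(m_i)\in\g(\mathcal{M})$ and $\hat\g(\eps^{\pm}_{\sigma,\hat\g})\in\hat\g(\mathcal{N})\subset\g(\mathcal{M})$. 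For the generators from $\hat\g(\mathcal{N})$: one argues similarly, now from the intertwining relation $\sigma(\hat\g(n))\,\eps^{\pm}_{\sigma,\hat\g}=\eps^{\pm}_{\sigma,\hat\g}\,\hat\g(\sigma(n))$ expressing $\eps^{\pm}_{\sigma,\hat\g}\in\text{Hom}(\sigma\hat\g,\hat\g\sigma)$, together with the same hexagon, to conclude that $\ad_{\eps^{\pm}_{\sigma,\hat\g}}\circ\sigma$ restricted to $\mathcal{N}$ is compatible with $\hat\g$ and therefore sends $\hat\g(\mathcal{N})$ into $\hat\g(\mathcal{N})\subset\g(\mathcal{M})$. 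Finally, since $\sigma$ and $\ad_{\eps^{\pm}_{\sigma,\hat\g}}$ are normal and $\g(\mathcal{M})$ is a von Neumann algebra, the inclusion on the strongly generating set $\{\g(m_i)\hat\g(n)\}$ propagates to all of $\g(\mathcal{M})$, which is the assertion.

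The conceptual core of the argument is the reduction step: identifying $\g(\mathcal{M})$ inside $\mathcal{N}$ through the Pimsner--Popa basis of the discrete inclusion, and recognizing the $\g(m_i)$ as intertwiners $\hat\g^{2}\to\hat\g$ of the braided category $\cc$; once this is in place, only naturality and a single hexagon identity enter. The two places where I expect the actual work to be are: (i) the strong-operator convergence of the Pimsner--Popa expansions, needed to transport the generating property through the normal map $\g$; and (ii) the precise bookkeeping of the braiding and of $\ad$ — matching the normalization fixed in \eqref{alphainduc} — so that both families of identities in the previous paragraph come out valued exactly in $\g(\mathcal{M})$. In particular, the treatment of the generators coming from $\hat\g(\mathcal{N})$ is the delicate point, since it hinges on the compatibility of $\eps^{\pm}_{\sigma,\hat\g}$ with $\sigma\circ\hat\g$ versus $\hat\g\circ\sigma$ and must be set up so as not to leave $\g(\mathcal{M})$.
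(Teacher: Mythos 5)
Your proposal follows essentially the same route as the paper: reduce to the generators of $\mathcal{M}$ given by $\mathcal{N}$ together with the Pimsner--Popa basis $\{m_i\}$ of the discrete inclusion, observe that $\g(m_i)$ intertwines $\hat\g^2$ and $\hat\g$ so that naturality \eqref{naturality} plus the tensoriality identity \eqref{tensoriality2} for $\varepsilon^{\pm}_{\sigma,\hat\g^2}$ express $\ad\!{}_{\varepsilon^\pm_{\sigma,\hat\g}}\circ\sigma\circ\g(m_i)$ as $\g(m_i)$ multiplied by an element of $\hat\g(\mathcal{N})$, handle $\g(\mathcal{N})=\hat\g(\mathcal{N})$ via the intertwining property of the braiding itself, and conclude by normality of $\g$. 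The only divergence is the bookkeeping convention for ${\rm Hom}$-sets and the side on which the braiding acts (your $\g(m_i)\hat\g(\varepsilon^{\pm}_{\sigma,\hat\g})$ versus the paper's $\hat\g(\varepsilon^{\pm}_{\sigma,\hat\g})^*\g(m_i)$, and the orientation in the $\hat\g(\mathcal{N})$-part), which, once fixed consistently as in the paper, does not affect the membership in $\g(\mathcal{M})$ that the lemma asserts.
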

\begin{proof}
For $n\in\mathcal{N}$, we have $\ad\!{}_{\varepsilon^\pm_{\sigma,\hat\g}}\circ \sigma\circ\gamma(n)=\hat\g\circ\sigma(n)$.
We must check that
$$
\ad\!{}_{\varepsilon^\pm_{\sigma,\hat\g}}\circ \sigma\circ\gamma(m_i)=\hat\g(\varepsilon^\pm_{\sigma,\hat\g})^*\gamma(m_i)\,,
$$
where the $m_i$ are elements of the Pimsner-Popa basis from Proposition \ref{prop:discrete}.

We first note that $\gamma(m_i)^*\in{\rm Hom}(\hat\g^2,\hat\g)$. Hence, by naturality and tensoriality of the braidings $\varepsilon^{\pm}$ ({\it cf.} \eqref{naturality} and \eqref{tensoriality2}),
$$
\varepsilon^{\pm}_{\sigma,\hat\g}\sigma(\gamma(m_i^*))=\gamma(m_i)^*\varepsilon^{\pm}_{\sigma,\hat\g^2}=\gamma(m_i)^*\hat\g(\varepsilon^{\pm}_{\sigma,\hat\g})\varepsilon^{\pm}_{\sigma,\hat\g}\,,
$$
and thus
$$
\ad\!{}_{\varepsilon^\pm_{\sigma,\hat\g}}\circ \sigma\circ \gamma(m_i)=\hat\g(\varepsilon^{\pm}_{\sigma,\hat\g})^*\gamma(m_i)\,.
$$

Since $\mathcal{M}$ is generated as a von Neumann algebra by $\mathcal{N}$ and $\{m_i\}_{i\in I}$ ({\it cf.} \cite{FidIso}, Proposition 3.6), and since $\gamma$ is normal, we get the claim.
\end{proof}
When $\mathcal{M}:=\mathcal{N}\rtimes_\alpha \mathbb{Z}$, with $\alpha$ an outer automorphism in $\ob(\mathcal{C})$, the alpha-induction extension $\alpha^{\pm}_\sigma$ coincides with $\Phi_{\sigma,\epsilon^\mp_{\alpha,\sigma}}$, as the following proposition shows explicitly.

\begin{prop}
Let $\mathcal{N}$ be a type $\ty{III}$ factor with separable predual, $\alpha\in\ed(\mathcal{\mathcal{N}})$ be an outer automorphism of $\mathcal{N}$, and $\mathcal{M}=\mathcal{N}\rtimes_\alpha \mathbb{Z}$. Let $\mathcal{C}$ be a full tensor subcategory  of $\ed(\mathcal{N})$ containing $\alpha$ and $\hat\g$, which is braided with unitary braidings $\varepsilon^{\pm}$. Then for any $\sigma\in\ob(\mathcal{C})$, 
$$\alpha^{\pm}_\sigma=\Phi_{\sigma,\epsilon^\mp_{\alpha,\sigma}}.$$
\end{prop}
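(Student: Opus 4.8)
The plan is to verify the identity $\alpha^{\pm}_\sigma = \Phi_{\sigma, \varepsilon^{\mp}_{\alpha,\sigma}}$ by checking that both automorphisms agree on a generating set for $\mathcal{M} = \mathcal{N} \rtimes_\alpha \mathbb{Z}$, namely on $\mathcal{N}$ and on the implementing unitary $V$. First I would record the obvious fact that $\alpha^{\pm}_\sigma$ restricted to $\mathcal{N}$ equals $\sigma$: indeed, from the proof of Lemma \ref{welldefinedalpha} one has $\mathrm{ad}_{\varepsilon^{\pm}_{\sigma,\hat\g}}\circ\sigma\circ\gamma(n) = \hat\g\circ\sigma(n)$ for $n\in\mathcal{N}$, and applying $\gamma^{-1}$ gives $\alpha^{\pm}_\sigma(n) = \sigma(n)$ (using $\gamma^{-1}\circ\hat\g = \gamma^{-1}\circ(\gamma\restriction_{\mathcal{N}}) \circ\sigma$ appropriately interpreted, i.e. $\hat\g = \gamma\circ\iota$ with $\iota$ the inclusion). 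Since $\Phi_{\sigma,\varepsilon^{\mp}_{\alpha,\sigma}}$ by definition \eqref{han} also acts as $\sigma$ on $\mathcal{N}$, the two automorphisms agree there. So the whole content is the computation of $\alpha^{\pm}_\sigma(V)$.

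Next I would compute $\alpha^{\pm}_\sigma(V) = \gamma^{-1}\big(\varepsilon^{\pm}_{\sigma,\hat\g}\,\sigma(\gamma(V))\,(\varepsilon^{\pm}_{\sigma,\hat\g})^*\big)$. The key is that for a crossed product by an outer automorphism $\alpha$, the canonical endomorphism $\gamma$ and its action on the generator $V$ can be made explicit: the dual canonical endomorphism is $\hat\g = \alpha\cdot\alpha^{-1} = \mathrm{id}$-equivalent — more precisely, for $\mathcal{M} = \mathcal{N}\rtimes_\alpha\mathbb{Z}$ with dual action, the canonical endomorphism restricts on $\mathcal{N}$ to (a conjugate of) $\alpha^{-1}\circ\alpha = \mathrm{id}$, but the relevant structural fact is that $\gamma(V)$ is, up to the identification, expressible through the braiding data. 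Concretely, one uses that $V \in \mathrm{Hom}(\mathrm{id}, \alpha)$ inside $\mathcal{M}$ in the sense $Vn = \alpha(n)V$, so $\gamma(V) \in \mathrm{Hom}(\hat\g, \alpha\circ\hat\g)$ — equivalently $\gamma(V)$ intertwines $\hat\g$ and $\hat\g$ composed appropriately — and then naturality \eqref{naturality} of the braiding applied to the morphism $\gamma(V)$, together with tensoriality \eqref{tensoriality2}, lets one slide $\sigma(\gamma(V))$ past $\varepsilon^{\pm}_{\sigma,\hat\g}$ at the cost of precisely the braiding operator $\varepsilon^{\pm}_{\sigma,\alpha}$ (or its adjoint). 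Tracking the $\pm$ and applying $\gamma^{-1}$ should yield $\alpha^{\pm}_\sigma(V) = (\varepsilon^{\mp}_{\alpha,\sigma})\,V$, which is exactly $\Phi_{\sigma,\varepsilon^{\mp}_{\alpha,\sigma}}(V)$ per \eqref{han}; one must also double-check the consistency (intertwining) condition \eqref{inter} for the pair $(\sigma,\alpha)$ with unitary $\varepsilon^{\mp}_{\alpha,\sigma}$, which follows again from naturality of the braiding.

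Finally, having matched both automorphisms on $\mathcal{N}$ and on $V$, and having noted both are genuine automorphisms of $\mathcal{M}$ (the left-hand side by Lemma \ref{welldefinedalpha} in the irreducible discrete braided setting, the right-hand side by the Proposition establishing $\Phi_{\theta,u}\in\aut(\mathfrak{A}\rtimes_\alpha\mathbb{Z})$), uniqueness of extensions from generators gives $\alpha^{\pm}_\sigma = \Phi_{\sigma,\varepsilon^{\mp}_{\alpha,\sigma}}$.

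I expect the main obstacle to be the bookkeeping in the second step: correctly identifying $\gamma(V)$ as an intertwiner of the right endomorphisms and then applying naturality/tensoriality of $\varepsilon^{\pm}$ with all indices and the $\gamma^{-1}$ in the right places, so that the braiding that survives is genuinely $\varepsilon^{\mp}_{\alpha,\sigma}$ and not $\varepsilon^{\pm}_{\alpha,\sigma}$ or $\varepsilon^{\pm}_{\sigma,\alpha}$. In particular one has to be careful that $\hat\g$ here is essentially trivial (or a sum of copies related to $\alpha^{-k}$) for a $\mathbb{Z}$-crossed product, so that $\gamma$ on $\mathcal{N}$ is transparent and the only nontrivial effect of $\gamma$ is on $V$; pinning down $\gamma(V)$ precisely — likely $\gamma(V) = $ the Jones-projection/implementing-isometry data contracted with $V$ — is where the real work lies. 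A secondary subtlety is ensuring the sums/strong-operator-topology convergences from the discreteness hypothesis don't interfere, but since everything reduces to the single generator $V$ and a normal map $\gamma$, this should be routine once the categorical identity is set up.
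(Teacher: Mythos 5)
Your proposal is correct and follows essentially the paper's own route: one checks agreement on $\mathcal{N}$ and on $V$, using that $Vn=\alpha(n)V$ gives $\gamma(V)\in\mathrm{Hom}(\hat\g,\hat\g\circ\alpha)$, then naturality and tensoriality of the braiding yield $\gamma(V)=\hat\g(\varepsilon^{\pm}_{\sigma,\alpha})\,\varepsilon^{\pm}_{\sigma,\hat\g}\,\sigma(\gamma(V))\,\varepsilon^{\pm *}_{\sigma,\hat\g}$, and applying $\gamma^{-1}$ gives $\alpha^{\pm}_{\sigma}(V)=\varepsilon^{\pm *}_{\sigma,\alpha}V=\varepsilon^{\mp}_{\alpha,\sigma}V$. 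The obstacle you anticipate in fact dissolves: no explicit description of $\gamma(V)$, of the Jones/Pimsner--Popa data, or of $\hat\g$ (which for the $\mathbb{Z}$-crossed product is a direct sum of the powers $\alpha^{k}$, not trivial) is needed --- the abstract intertwining relation together with $\varepsilon^{\pm}_{\sigma,\hat\g\circ\alpha}=\hat\g(\varepsilon^{\pm}_{\sigma,\alpha})\varepsilon^{\pm}_{\sigma,\hat\g}$ (and the correct order $\hat\g\circ\alpha$ rather than $\alpha\circ\hat\g$) already completes the bookkeeping.
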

\begin{proof}
It is enough to show that 
\begin{itemize}
\item[] $\alpha^\pm_\sigma(n)=\sigma(n)$ for all $n\in\mathcal{N}$,
\item[] $\alpha^\pm_\sigma(V)=\epsilon^{\mp}_{\alpha,\sigma}V$.
\end{itemize}
The first equality is straightforward, recalling that $\hat\g=\gamma\lceil_\mathcal{N}$.
For the second equality, note that, since $Vn=\alpha(n)V$ for all $n\in\mathcal{N}$, we have $\gamma(V)\in{\rm Hom}(\hat\g,\hat\g\circ\alpha)$. By naturality and tensoriality of the braiding, we get
$$
\gamma(V)\varepsilon^{\pm}_{\sigma,\hat\g}=\varepsilon^{\pm}_{\sigma,\hat\g\circ\alpha} \sigma(\gamma(V))=\hat\g(\varepsilon^{\pm}_{\sigma,\alpha})\varepsilon^{\pm}_{\sigma,\hat\g}\sigma(\gamma(V))\,,
$$
that is
\begin{equation}
\label{alphaproof}
\gamma(V)=\hat\g(\varepsilon^{\pm}_{\sigma,\alpha})\varepsilon^{\pm}_{\sigma,\hat\g}\sigma(\gamma(V))\varepsilon^{\pm *}_{\sigma,\hat\g}=\hat\g(\varepsilon^{\pm}_{\sigma,\alpha})\gamma(\alpha^{\pm}_\sigma(V)).
\end{equation}

Applying $\gamma^{-1}$ to \eqref{alphaproof} we get
$$\alpha^{\pm}_{\sigma}(V)=\varepsilon^{\pm *}_{\sigma,\alpha}V=\varepsilon^{\mp}_{\alpha,\sigma}V.$$
\end{proof}

\section{General ergodic properties of the skew-product}

We start with the following preliminary result concerning the topological ergodicity of $(\mathfrak{A} \rtimes_{\alpha}\mathbb{Z},\Phi_{\th,u})$, resulting from that of $(\ga,\th)$.

\begin{prop}
\label{toerp}
For the $C^*$-dynamical systems $(\ga,\th)$ and $(\ga\rtimes_\a\bz, \Phi_{\th,u})$, the following are equivalent:
\begin{itemize} 
\item[(i)] $(\ga\rtimes_\a\bz)^{\Phi_{\th,u}}=\ga^\th$;
\item[(ii)] with the $u_n$ given in \eqref{coo}, the equation 
\begin{equation}
\label{comc*}
\a^{-n}(u_n)\th(a)=a\,,\quad a\in\ga\,,
\end{equation}
has no nontrivial solutions for each $n\neq0$.
\end{itemize}
Consequently, suppose that $(\ga,\th)$ is topologically ergodic. Then $(\ga\rtimes_\a\bz, \Phi_{\th,u})$ is also topologically ergodic if and only if {\rm (ii)} holds true.
\end{prop}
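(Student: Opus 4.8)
The plan is to use the Fourier expansion \eqref{cesexp} to break the fixed-point equation $\Phi_{\th,u}(x)=x$ into one scalar-free equation for each homogeneous component of $x$, the $n$-th of which is going to be precisely \eqref{comc*} (degenerating to the trivial identity $\th(a_0)=a_0$ when $n=0$). So the first step I would carry out is the bookkeeping identity describing how $\Phi_{\th,u}$ acts on the subspaces $V^n\ga$: from \eqref{han}, Lemma \ref{coo0} and \eqref{coo1} one gets
$$
\Phi_{\th,u}(V^na)=(uV)^n\th(a)=V^n\,\a^{-n}(u_n)\,\th(a)\,,\qquad a\in\ga,\ n\in\bz\,.
$$
In particular $\Phi_{\th,u}$ preserves each $V^n\ga$ and restricts to $\th$ on $\ga=V^0\ga$, so that $\ga^\th\subseteq(\ga\rtimes_\a\bz)^{\Phi_{\th,u}}$ unconditionally.

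Next, for $x\in\ga\rtimes_\a\bz$ put $a_n:=E(V^{-n}x)\in\ga$, so that $x=\sum_{n\in\bz}V^na_n$ in the Ces\`aro sense by \eqref{cesexp}. Since $\Phi_{\th,u}$ is a (bounded, isometric) automorphism it commutes with Ces\`aro summation, and likewise $E(V^{-n}\,{\bf\cdot}\,)$ does; combining this with the displayed identity yields $E\big(V^{-n}\Phi_{\th,u}(x)\big)=\a^{-n}(u_n)\th(a_n)$. Because the family $\{E(V^{-n}\,{\bf\cdot}\,)\}_{n\in\bz}$ separates the points of $\ga\rtimes_\a\bz$ (again by \eqref{cesexp}, applied to $\Phi_{\th,u}(x)-x$), we conclude that $x$ is $\Phi_{\th,u}$-fixed if and only if $\a^{-n}(u_n)\th(a_n)=a_n$ for every $n\in\bz$; for $n=0$ this is just $a_0\in\ga^\th$, while for $n\neq0$ it says exactly that $a_n$ solves \eqref{comc*}.

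The equivalence (i)$\Leftrightarrow$(ii) then reads off at once. If \eqref{comc*} has no nontrivial solution for any $n\neq0$, then every $\Phi_{\th,u}$-fixed $x$ has $a_n=0$ for $n\neq0$, hence $x=a_0\in\ga^\th$, which together with $\ga^\th\subseteq(\ga\rtimes_\a\bz)^{\Phi_{\th,u}}$ gives (i). Conversely, if some $0\neq a\in\ga$ solves \eqref{comc*} for some $n_0\neq0$, then $x:=V^{n_0}a$ is a nonzero $\Phi_{\th,u}$-fixed element with $E(V^{-n_0}x)=a\neq0$, hence $x\notin\ga$, so $(\ga\rtimes_\a\bz)^{\Phi_{\th,u}}\neq\ga^\th$ and (i) fails. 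Finally, the last assertion is a specialisation: if $(\ga,\th)$ is topologically ergodic, i.e. $\ga^\th=\bc I$, then the condition $(\ga\rtimes_\a\bz)^{\Phi_{\th,u}}=\bc I$ coincides with $(\ga\rtimes_\a\bz)^{\Phi_{\th,u}}=\ga^\th$, which by the equivalence just proved holds if and only if (ii) holds.

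The only point requiring a little care is the interchange of $\Phi_{\th,u}$, $E$ and the maps $x\mapsto V^{-n}x$ with the Ces\`aro sums, together with the claim that the coefficient functionals separate points; these are routine given the continuity of all maps involved and \eqref{cesexp}, and in fact one may bypass series entirely by verifying, for a fixed $x$, that $\Phi_{\th,u}(x)=x\iff E(V^{-n}\Phi_{\th,u}(x))=E(V^{-n}x)$ for all $n$, and computing the left-hand side directly from the fact that $E$ commutes with the gauge action $\r_z$ (which in turn commutes with $\Phi_{\th,u}$ on the generators).
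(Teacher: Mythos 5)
Your proposal is correct and follows essentially the same route as the paper: the paper likewise expands $x=\sum_n V^nc_n(x)$ with $c_n(x)=E(V^{-n}x)$ via \eqref{cesexp}, writes $\Phi_{\th,u}(x)-x=\th(c_o(x))-c_o(x)+\sum_{n\neq0}V^n\big(\a^{-n}(u_n)\th(c_n(x))-c_n(x)\big)$, and reads off the equivalence coefficientwise, the final assertion being the same specialisation $\ga^\th=\bc I$. Your extra care with interchanging $\Phi_{\th,u}$ and $E(V^{-n}\,{\bf\cdot}\,)$ with the Ces\`aro sums and with the explicit converse via $x=V^{n_0}a$ only spells out details the paper leaves implicit.
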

\begin{proof}
As shown in \cite{D}, Theorem VIII.2.2, any element $x\in \ga\rtimes_\a\bz$ can be decomposed into its Fourier series
$\sum_{n\in\mathbb{Z}}V^nc_n(x)$, with $c_n(x):=E(V^{-n}x)$, and the convergence being understood in norm
in the sense of Ces\`aro. 
But then, for $x\in\ga\rtimes_\a\bz$,
$$
\Phi_{\th,u}(x)-x=\th(c_o(x))-c_o(x)+\sum_{n\neq0}V^n\big(\a^{-n}(u_n)\th(c_n(x))-c_n(x)\big)\,,
$$
and therefore $(\ga\rtimes_\a\bz)^{\Phi_{\th,u}}=\ga^\th$ turns out to be equivalent to the fact that $\a^{-n}(u_n)\th(a)=a$ has no nontrivial solutions for $n\neq0$.

The last assertion is an immediate consequence of the previous part.
\end{proof}
The next proposition provides more information on the structure of $W_n$, the set of all solutions of the cohomological equations \eqref{comc*} at level $n$ when $(\mathfrak{A}, \theta)$ is topologically ergodic.
\begin{prop}
\label{unco}
Suppose that the $C^*$-dynamical systems $(\ga,\th)$ is topologically ergodic. Then the sets $W_n$ of the solutions of the cohomological equations, $n\in\bz$, are generated by a single unitary: if \eqref{comc*} admits a nonnull solution, then
$W_n=\bc w_n$, for some  $w_n\in\ga$ with $w^*_nw_n=I=w_nw^*_n$.\footnote{Since $(\ga,\th)$ is topologically ergodic, $W_o=\bc I$.
In addition, if for some $n\neq0$, \eqref{comc*} has no nontrivial solution, $W_n$ is the zero-dimensional subspace generated by any unitary in $\ga$.}
\end{prop}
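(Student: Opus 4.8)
The plan is to analyze the solution set $W_n=\{a\in\ga\mid \a^{-n}(u_n)\th(a)=a\}$ directly. First I would show that $W_n$ is stable under the adjoint operation composed with multiplication by a fixed unitary; more precisely, I would verify that if $a\in W_n$ then $a^*a\in W_o$ and $aa^*\in W_o$. Writing the cohomological equation as $a=\a^{-n}(u_n)\th(a)$ and recalling $\a^{-n}(u_n)$ is unitary (it is a product of unitaries, by Lemma \ref{coo0}), we get $a^*a=\th(a^*)\a^{-n}(u_n)^*\a^{-n}(u_n)\th(a)=\th(a^*a)=\th(a^*a)$, so $a^*a\in\ga^\th$. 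Since $(\ga,\th)$ is topologically ergodic, $\ga^\th=\bc I$, hence $a^*a=\l I$ for some $\l\geq0$; similarly, by re-deriving the equation for $aa^*$ (using that $\a^{-n}(u_n)$ commutes appropriately or by applying the same argument to the equivalent relation $a\a^{n}(\cdot)$ form in \eqref{coo1}), we obtain $aa^*=\mu I$. Taking traces/states or simply comparing, $\l=\mu$, so every nonzero $a\in W_n$ is a scalar multiple of a unitary: $a=\sqrt{\l}\,w_n$ with $w_n^*w_n=I=w_nw_n^*$.

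Next I would show that $W_n$ is one-dimensional once it is nonzero. Suppose $a,b\in W_n$ are both nonzero; by the previous step we may assume both are unitaries. Then $b^*a$ satisfies: from $a=\a^{-n}(u_n)\th(a)$ and $b=\a^{-n}(u_n)\th(b)$ we get $b^*a = \th(b^*)\a^{-n}(u_n)^*\a^{-n}(u_n)\th(a) = \th(b^*a)$, so $b^*a\in\ga^\th=\bc I$, whence $b^*a=\z I$ for some $\z\in\bt$, i.e. $a=\z b$. Therefore $W_n=\bc w_n$ for any fixed unitary solution $w_n$, which establishes the claim. The footnote case ($n\neq0$ with no nontrivial solution) is immediate since then $W_n=\{0\}$, which is the trivial subspace $\bc w_n$ with $w_n$ any unitary — the scalar being forced to be $0$; and $W_o=\bc I$ is exactly topological ergodicity.

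I expect the only mild subtlety to be the step deducing $aa^*\in\ga^\th$ cleanly, since the equation is written with $\th$ acting on $a$ on the right of the unitary rather than symmetrically; one needs to either invoke the alternative form \eqref{coo1} of $(uV)^n$, or first note that $a^*$ solves a companion cohomological equation at level $n$ with a related intertwining unitary and then run the argument. This is purely formal once one tracks the unitaries via Lemma \ref{coo0}, so there is no real obstacle; the essential content is just that topological ergodicity collapses $\ga^\th$ to scalars, and a ratio of two solutions lies in $\ga^\th$.
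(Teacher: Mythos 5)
Your argument is correct and essentially coincides with the paper's proof: you show $a^*a\in\ga^\th=\bc I$, handle $aa^*$ via a companion relation, and then prove $b^*a\in\ga^\th$ to obtain one-dimensionality of $W_n$. The only step you flag as a subtlety is resolved in the paper exactly along the lines you suggest: since $V^na=\a^n(a)V^n$ is $\F_{\th,u}$-invariant, $\a^n(a)$ satisfies the companion equation $\th(\a^n(a))u_n=\a^n(a)$, from which $\th(\a^n(aa^*))=\a^n(aa^*)$ is a scalar and hence so is $aa^*$ (note that $\a^{-n}(u_n)$ need not commute with $\th(aa^*)$, so the companion equation, not a commutation, is the right fix).
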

\begin{proof}
We first prove that all solutions are a, possibly trivial, multiple of a unitary. 

If $a\in\ga$ satisfies the equation
$\alpha^{-n}(u_n)\theta(a)=a$, then it also satisfies $\theta(a^*)\alpha^{-n}(u_n^*)=a^*$. Multiplying the latter by the former equation, we find the equality $\theta(a^*a)=a^*a$. Hence, by topological ergodicity we must have $a^*a=\lambda I$, for some, necessarily positive, scalar $\lambda$. Since $a$ is a solution of the equation, $V^na=\alpha^n(a)V^n$ is
$\Phi_{\theta, u}$- invariant, but then the equation $\theta(\alpha^n(a))u_n=\alpha^n(a)$ must be satisfied as well. 
Taking its adjoint, it yields $u_n^*\theta(\alpha^n(a^*))=\alpha^n(a^*)$. Again, if we multiply the former by the latter equality,
we now find $\theta(\alpha^n(aa^*))=\alpha^n(aa^*)$, whence $\alpha^n(aa^*)$ is a scalar. But then $aa^*$ is a scalar as well, say $aa^*=\mu I$. Since $\|a^*a\|=\|aa^*\|$, we have $\lambda=\mu$, which means $a$ is a multiple of a unitary.

All is left to do is show that any two (non-trivial) solutions associated with the same $n$, say $a,b\in\ga$, are linearly dependent. From
$\alpha^{-n}(u_n)\theta(a)=a$ and $\theta(b^*)\alpha^{-n}(u_n^*)=b^*$, we see that $\theta(b^*a)=b^*a$, hence
$b^*a=\lambda I$. Thanks to the previous part, we can safely assume that $b$ is a unitary, which means $b=\lambda a$.
\end{proof}
Let $(\ch_\om,\pi_\om, V_{\om,\Phi_{\th,u}},\xi_\om)$ be the GNS covariant representation of $\ga\rtimes_\a\bz$
associated to the invariant state $\om$. Consider the orthogonal projection $P$ onto the closed subspace
$[\pi_\om(\ga)\xi_\om]:=\overline{\pi_\omega(\mathfrak{A})\xi_\omega}$. It is easily seen that $P\in\big(\pi_\om(\ga)\bigcup\{V_{\om,\Phi_{\th,u}}\}\big)'$, and therefore $(P\ch_\om,P\pi_\om, PV_{\om,\Phi_{\th,u}},\xi_\om)$ provides, up to unitary equivalence, the GNS covariant representation $(\ch_{\om_o},\pi_{\om_o}, V_{\om_o,\th},\xi_{\om_o})$ of $\ga$ associated to the invariant state $\om$.
Under the above identification of the covariant GNS representation of $\om_o$,  we also note that $E_1^{V_{\om_o,\th}}\leq E_1^{V_{\om,\Phi_{\th,u}}}$.

If we set 
$$
\ch_o:=[\pi_\om(\ga)\xi_\om]=\ch_{\om_o}\,,\,\,\ch_n:=\pi_\om(V)^n[\pi_\om(\ga)\xi_\om]\,,\,\,\,n\in\bz\,.
$$
we have the following elementary, yet 
useful, result.
\begin{lem}
\label{orto}
The Hilbert spaces $\mathcal{H}_\omega$ decomposes as $\oplus_{n\in\mathbb{Z}} \mathcal{H}_n$.
\end{lem}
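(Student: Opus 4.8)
The plan is to verify directly the two conditions defining an orthogonal Hilbert space direct sum: that the closed subspaces $\ch_n$ are pairwise orthogonal, and that $\bigcup_{n}\ch_n$ is total in $\ch_\om$.

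For orthogonality, I would fix $m\neq n$ in $\bz$ and $a,b\in\ga$ and use that $\pi_\om(V)$ is unitary together with the crossed-product relation $V^ka=\a^k(a)V^k$ to rewrite
\[
\langle\pi_\om(V)^m\pi_\om(a)\xi_\om,\pi_\om(V)^n\pi_\om(b)\xi_\om\rangle
=\langle\pi_\om(V^{m-n}a)\xi_\om,\pi_\om(b)\xi_\om\rangle
=\om\big(b^*\a^{m-n}(a)V^{m-n}\big),
\]
which vanishes by \eqref{tga} since $m-n\neq0$. As vectors of the form $\pi_\om(V)^m\pi_\om(a)\xi_\om$ span a dense subspace of $\ch_m$ (and similarly for $n$), this gives $\ch_m\perp\ch_n$.

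For totality, I would exploit the cyclicity of $\xi_\om$ for $\pi_\om(\ga\rtimes_\a\bz)$: it is enough to show $\pi_\om(x)\xi_\om\in\overline{\bigoplus_{n}\ch_n}$ for all $x\in\ga\rtimes_\a\bz$. The Fourier expansion \eqref{cesexp} writes $x$ as the series $\sum_{n\in\bz}V^nE(V^{-n}x)$, convergent in norm in the sense of Ces\`aro; applying the norm-continuous $*$-homomorphism $\pi_\om$ and evaluating at $\xi_\om$ shows that the Ces\`aro means of the partial sums of $\sum_{n}\pi_\om(V)^n\pi_\om\big(E(V^{-n}x)\big)\xi_\om$ converge to $\pi_\om(x)\xi_\om$ in $\ch_\om$. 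Each summand lies in $\ch_n$ because $E(V^{-n}x)\in\ga$, so every such Ces\`aro mean sits in the algebraic direct sum $\bigoplus_{n}\ch_n$, and hence $\pi_\om(x)\xi_\om$ lies in its closure. Together with orthogonality this yields $\ch_\om=\bigoplus_{n\in\bz}\ch_n$.

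As an alternative route, I could instead invoke the concrete GNS model $\ch_\om=\ell^2(\bz;\ch_{\om_o})$ constructed in the proof of Proposition \ref{czni}: there $(\pi_\om(a)\xi_\om)(k)=\pi_{\om_o}(a)\xi_{\om_o}\delta_{k,0}$, so $\ch_o$ is precisely the $k=0$ summand, and since $\pi_\om(V)$ acts as the bilateral shift, $\ch_n$ is the $k=n$ summand; the decomposition is then immediate. I do not anticipate a genuine obstacle in either approach; the only point that calls for a little care is transferring the Ces\`aro convergence of the Fourier expansion of $x$ to convergence of $\pi_\om(x)\xi_\om$, which is handled by continuity of $\pi_\om$.
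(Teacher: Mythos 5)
Your proposal is correct and follows essentially the same route as the paper: the identical orthogonality computation, using the crossed-product commutation relation together with \eqref{tga} on vectors $\pi_\om(V^m a)\xi_\om$, with totality then coming from cyclicity of $\xi_\om$. The only difference is that the paper disposes of totality immediately by cyclicity (finite sums $\sum_k V^k a_k$ are dense in $\ga\rtimes_\a\bz$ by construction), so your detour through the Ces\`aro--Fourier expansion \eqref{cesexp} is valid but not needed.
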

\begin{proof}
By cyclicity, all we need to prove is $\mathcal{H}_m\perp \mathcal{H}_n$ if $m$ and $n$ differ. To this end, it is enough
to show that any two vectors $\pi_\omega(V^ma)\xi_\omega$ and $\pi_\omega(V^nb)\xi_\omega$, $a,b\in\mathfrak{A}$, are orthogonal in $\mathcal{H}_\omega$. But this is seen by an easy computation, for
\begin{align*}
&\langle\pi_\omega(V^ma)\xi_\omega,\pi_\omega(V^nb)\xi_\omega\rangle=\om(b^*V^{m-n}a)\\
=&\om(b^*\th^{m-n}(a)V^{m-n})
=\om_o(b^*\th^{m-n}(a))\d_{m,n}\,.
\end{align*} 
\end{proof}
\begin{rem}
\label{czrffi}
The subspaces $\mathcal{H}_n$ are all invariant under the restriction of $\pi_\omega$ to $\mathfrak{A}$:
$$
\pi_\omega(\mathfrak{A})\mathcal{H}_n\subset \mathcal{H}_n\,,\quad n\in\bz\,.
$$
Furthermore, as noticed above, the restriction
of $\pi_\omega$, thought of as a representation of $\mathfrak{A}$, to the cyclic subspace $\mathcal{H}_o$ is
unitarily equivalent to the GNS representation $\pi_{\omega_o}$ of $\om_o$. 
\end{rem}
The following result is the version of Proposition \ref{toerp} at the Hilbert space level.
\begin{prop}
\label{toerph}
For the $C^*$-dynamical systems $(\ga,\th,\om_o)$ and $(\ga\rtimes_\a\bz, \Phi_{\th,u},\om)$, the following are equivalent:
\begin{itemize} 
\item[(i)] $E_1^{V_{\om_o,\th}}=E_1^{V_{\om,\Phi_{\th,u}}}$;
\item[(ii)] with the $u_n$ given in \eqref{coo}, the equation 
\begin{equation}
\label{coml2}
\pi_{\om_o}(\a^{-n}(u_n))V_{\om_o,\th}\eta=\eta\,,\quad\eta\in\ch_{\om_o}\,,
\end{equation}
has no nontrivial solutions for each $n\neq0$.
\end{itemize}

Consequently, suppose that $(\ga,\th,\om_o)$ is weakly clustering. Then $(\ga\rtimes_\a\bz, \Phi_{\th,u},\om)$ is also weakly clustering if and only if {\rm (ii)} holds true.
\end{prop}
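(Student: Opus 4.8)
The approach I would take rests on the observation that $\Phi_{\th,u}$ preserves the $\bz$-grading $\ga V^{n}$ of the crossed product, so that its GNS implementation $V_{\om,\Phi_{\th,u}}$ is block-diagonal with respect to the orthogonal decomposition $\ch_{\om}=\bigoplus_{n\in\bz}\ch_{n}$ of Lemma \ref{orto}; once this is in place, one simply reads off the fixed-point projection $E_{1}^{V_{\om,\Phi_{\th,u}}}$ block by block and recognises the $n$-th block as the operator appearing in \eqref{coml2}.

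Concretely, for each $n\in\bz$ I would introduce the unitary $W_{n}\colon\ch_{\om_o}\to\ch_{n}$ determined by $W_{n}\pi_{\om_o}(a)\xi_{\om_o}:=\pi_{\om}(V)^{n}\pi_{\om}(a)\xi_{\om}$ for $a\in\ga$; its well-definedness and surjectivity rest on $\langle\pi_{\om}(V^{n}a)\xi_{\om},\pi_{\om}(V^{n}b)\xi_{\om}\rangle=\om(b^{*}a)=\om_o(b^{*}a)=\langle\pi_{\om_o}(a)\xi_{\om_o},\pi_{\om_o}(b)\xi_{\om_o}\rangle$, which is exactly the computation already carried out in Lemma \ref{orto}, and $W_{0}$ is just the identification of $\ch_{o}$ with $\ch_{\om_o}$ recorded in Remark \ref{czrffi}. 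Using $\Phi_{\th,u}(V^{n}a)=(uV)^{n}\th(a)=V^{n}\a^{-n}(u_{n})\th(a)$ — with the first equality from \eqref{han} and multiplicativity and the second being \eqref{coo1} — one checks on the dense subspace $\pi_{\om_o}(\ga)\xi_{\om_o}$ that
\begin{align*}
V_{\om,\Phi_{\th,u}}\,W_{n}\,\pi_{\om_o}(a)\xi_{\om_o}&=\pi_{\om}\big(\Phi_{\th,u}(V^{n}a)\big)\xi_{\om}=\pi_{\om}(V)^{n}\pi_{\om}\big(\a^{-n}(u_{n})\th(a)\big)\xi_{\om}\\
&=W_{n}\,\pi_{\om_o}\!\big(\a^{-n}(u_{n})\big)\,V_{\om_o,\th}\,\pi_{\om_o}(a)\xi_{\om_o},
\end{align*}
so that $V_{\om,\Phi_{\th,u}}W_{n}=W_{n}T_{n}$ with $T_{n}:=\pi_{\om_o}(\a^{-n}(u_{n}))V_{\om_o,\th}\in\cu(\ch_{\om_o})$. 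In particular each $\ch_{n}$ is $V_{\om,\Phi_{\th,u}}$-invariant, $V_{\om,\Phi_{\th,u}}|_{\ch_{n}}=W_{n}T_{n}W_{n}^{*}$, and, since $u_{0}=I$, one has $T_{0}=V_{\om_o,\th}$.

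From here the conclusion is immediate: writing $V_{\om,\Phi_{\th,u}}=\bigoplus_{n\in\bz}W_{n}T_{n}W_{n}^{*}$ yields $E_{1}^{V_{\om,\Phi_{\th,u}}}=W_{0}E_{1}^{V_{\om_o,\th}}W_{0}^{*}\oplus\bigoplus_{n\neq0}W_{n}E_{1}^{T_{n}}W_{n}^{*}$, which in particular recovers the inequality $E_{1}^{V_{\om_o,\th}}\leq E_{1}^{V_{\om,\Phi_{\th,u}}}$ noted above, so that equality in (i) is equivalent to $E_{1}^{T_{n}}=0$ for every $n\neq0$, i.e.\ to the absence of nonzero $\eta\in\ch_{\om_o}$ with $T_{n}\eta=\eta$, which is precisely \eqref{coml2}; this gives (i)$\iff$(ii). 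For the last assertion, recall that a $\th$- (resp.\ $\Phi_{\th,u}$-) invariant state is weakly clustering exactly when the corresponding covariant-GNS operator is mean ergodic, that is, has one-dimensional fixed space; hence if $(\ga,\th,\om_o)$ is weakly clustering then $E_{1}^{V_{\om_o,\th}}$ is a rank-one projection dominated by $E_{1}^{V_{\om,\Phi_{\th,u}}}$, so $(\ga\rtimes_{\a}\bz,\Phi_{\th,u},\om)$ is weakly clustering iff $E_{1}^{V_{\om,\Phi_{\th,u}}}$ is one-dimensional iff it equals $E_{1}^{V_{\om_o,\th}}$, i.e.\ iff (i), equivalently (ii), holds. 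The only point requiring genuine care is the block-diagonalisation step — verifying that $V_{\om,\Phi_{\th,u}}$ leaves every $\ch_{n}$ invariant and that the relabelling identity \eqref{coo1} is what makes the $n$-th block come out as $T_{n}$ — after which everything reduces to a routine unitary-equivalence and spectral-projection argument.
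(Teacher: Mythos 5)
Your proposal is correct and follows essentially the same route as the paper: both decompose $\ch_\om$ via Lemma \ref{orto}, identify the action of $V_{\om,\Phi_{\th,u}}$ on the $n$-th component with $\pi_{\om_o}(\a^{-n}(u_n))V_{\om_o,\th}$ using \eqref{coo1}, and deduce the equivalence of (i) and (ii) by comparing fixed vectors component by component, with the weak clustering statement handled via mean ergodicity of the covariant GNS operator. Your explicit introduction of the unitaries $W_n$ merely formalises the componentwise computation the paper carries out directly on a generic vector $\xi=\xi_o+\sum_{n\neq0}\pi_\om(V^n)\xi_n$.
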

\begin{proof}
By using the decomposition of $\ch_\om$ into orthogonal components in Lemma \ref{orto}, we have 
$$
\ch_\om\ni\xi=\xi_o+\sum_{n\neq0}\pi_\om(V^n)\xi_n\,,
$$ 
for a sequence 
$(\xi_n)_{n\in\bz}\subset\ch_{\om_o}\subset\ch_\om$. Consequently,
$$
V_{\om,\Phi_{\th,u}}\xi-\xi=V_{\om_o,\th}\xi_o-\xi_o+\sum_{n\neq0}\pi_\om(V^n)\big(\pi_{\om_o}(\a^{-n}(u_n))V_{\om_o,\th}-\idd_{\ch_{\om_o}}\big)\xi_n\,,
$$
and therefore $E_1^{V_{\om_o,\th}}=E_1^{V_{\om,\Phi_{\th,u}}}$ turns out to be equivalent to the fact that $\pi_{\om_o}(\a^{-n}(u_n))V_{\om_o,\th}\eta=\eta$ has no nontrivial solutions in $\ch_{\om_o}$ for $n\neq0$.

The weak clustering property of the invariant state $\om_o$ means that $\dim\big(E_1^{V_{\om_o,\th}}\big)=1$, and thus $\dim\big(E_1^{V_{\om,\Phi_{\th,u}}})=1$ if and only if {\rm (ii)} holds true.
\end{proof}

\section{Unique ergodicity and the cohomological equation}
 
In the present section, we suppose that the $C^*$-dynamical system $(\ga,\th,\om_o)$ is uniquely ergodic if it is not differently stated.
In analogy with what was found  in dealing with classical Anzai flows and skew-product on the noncommutative torus ({\it cf.} \cite{Fu, DFGR}), unique ergodicity 
is actually automatic for the $C^*$-dynamical system $(\ga\rtimes_\a\bz, \Phi_{\th,u},\om)$ as soon as it is ergodic.
\begin{prop}
\label{evsue}
The $C^*$-dynamical system $(\ga\rtimes_\a\bz, \Phi_{\th,u})$ is uniquely ergodic with $\omega\in\mathcal{S}(\ga\rtimes_\a\bz)$ 
its unique invariant state, if and only if  $(\ga\rtimes_\a\bz, \Phi_{\th,u},\om)$ is ergodic. 
\end{prop}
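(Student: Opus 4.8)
The plan is to prove the nontrivial implication: if $(\ga\rtimes_\a\bz,\Phi_{\th,u},\om)$ is ergodic, then $\om$ is the \emph{only} $\Phi_{\th,u}$-invariant state, and hence the system is uniquely ergodic. (The converse, that unique ergodicity forces ergodicity, is immediate since an invariant state with a unique invariant state above it is automatically extremal in $\cs(\ga\rtimes_\a\bz)^{\Phi_{\th,u}}$, so I would dispatch it in one line.) So suppose $\psi\in\cs(\ga\rtimes_\a\bz)^{\Phi_{\th,u}}$ is arbitrary. Using the conditional expectation $E:\ga\rtimes_\a\bz\to\ga$ and the Fourier expansion \eqref{cesexp}, any state is determined by its values on the subspaces $\ga V^n$, i.e.\ by the functionals $a\mapsto\psi(aV^n)$, $n\in\bz$. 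The idea is to pin these down by averaging against $\Phi_{\th,u}$.

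First I would restrict $\psi$ to $\ga\subset\ga\rtimes_\a\bz$: since $\Phi_{\th,u}\lceil_\ga=\th$, the restriction $\psi\lceil_\ga$ is a $\th$-invariant state on $\ga$, so by unique ergodicity of $(\ga,\th,\om_o)$ we get $\psi\lceil_\ga=\om_o=\om\lceil_\ga$. This handles the $n=0$ Fourier coefficient. For $n\neq0$, consider the bounded functional $a\mapsto\psi(aV^n)$ on $\ga$ and apply $\Phi_{\th,u}$-invariance together with the computation from \eqref{han} and Lemma \ref{coo0}: $\psi(aV^n)=\psi(\Phi_{\th,u}(aV^n))=\psi(\th(a)u_nV^n)=\psi(\th(a)u_n V^n)$. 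Iterating and averaging over $k=0,\dots,m-1$, one gets $\psi(aV^n)=\frac1m\sum_{k=0}^{m-1}\psi(\th^k(a)\,\th^{k-1}(u_n)\cdots u_n\,V^n)$, and by rewriting the accumulated cocycle one arrives at an expression of the form $\psi(aV^n)=\frac1m\sum_{k=0}^{m-1}\psi\big(b_k^{(n)} V^n\big)$ where $b_k^{(n)}$ involves the Cesàro iterates of $\th$ applied to $a$ times a fixed unitary. Using \eqref{xco} (the Cesàro convergence $\frac1m\sum_k\om_o(x\,\th^k(y))\to\om_o(x)\om_o(y)$, valid because $(\ga,\th,\om_o)$ is uniquely ergodic) one should be able to force the averaged functional to factor through $\om_o$, concluding that $\psi(aV^n)$ is a fixed multiple of $\om_o(\cdot)$; an elementary consistency check (e.g.\ applying the same argument with $a$ replaced by $a u_n^*$, or comparing $n$ and $-n$) should show that multiple is $0$, matching \eqref{tga}.

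Alternatively — and this is the cleaner route I would actually write — I would avoid coefficient-by-coefficient estimates and argue abstractly via the mean ergodic theorem. Ergodicity of $\om$ means $\dim E_1^{V_{\om,\Phi_{\th,u}}}$ is... well, not necessarily $1$, so I would instead use that for a $\bz$-action ergodicity plus $\bz$-abelianness (which holds here: the relevant set \eqref{abset} consists of commuting operators because $E_1^{V_{\om_o,\th}}\pi_{\om_o}(\ga)E_1^{V_{\om_o,\th}}$ is one-dimensional by unique ergodicity of $(\ga,\th,\om_o)$, and the proof of Proposition \ref{toerph} identifies $E_1^{V_{\om,\Phi_{\th,u}}}$ with the solutions of \eqref{coml2}) implies $\om$ is weakly clustering, hence $V_{\om,\Phi_{\th,u}}$ is mean ergodic, hence $\frac1m\sum_{k=0}^{m-1}\Phi_{\th,u}^k(x)\to\om(x)I$ weakly for every $x$. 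Then for any invariant state $\psi$, $\psi(x)=\frac1m\sum_k\psi(\Phi_{\th,u}^k(x))=\psi\big(\frac1m\sum_k\Phi_{\th,u}^k(x)\big)\to\psi(\om(x)I)=\om(x)$, so $\psi=\om$.

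\textbf{The main obstacle} I anticipate is establishing $\bz$-abelianness — equivalently, checking that $E_1^{V_{\om,\Phi_{\th,u}}}\pi_\om(\ga\rtimes_\a\bz)E_1^{V_{\om,\Phi_{\th,u}}}$ is commutative — when one only assumes ergodicity rather than weak clustering a priori. The decomposition of Lemma \ref{orto} together with Proposition \ref{toerph} should give enough control: ergodicity rules out nontrivial invariant vectors coming from the off-diagonal subspaces $\ch_n$, $n\neq0$, so $E_1^{V_{\om,\Phi_{\th,u}}}\subseteq\ch_o=\ch_{\om_o}$, and there $E_1^{V_{\om_o,\th}}$ is a rank-one projection onto $\bc\xi_{\om_o}$ by unique ergodicity of $(\ga,\th,\om_o)$, which trivially makes the compressed algebra abelian. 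Once that is in place, Proposition 3.1.10 of \cite{Sa} (cited in the excerpt) upgrades ergodicity of $\om$ to weak clustering, and the mean ergodic argument above closes the proof. The delicate point to get right is the identification $E_1^{V_{\om,\Phi_{\th,u}}}\ch_\om\subseteq\ch_{\om_o}$, which must use that a $\Phi_{\th,u}$-fixed vector, written via Lemma \ref{orto} as $\xi_o+\sum_{n\neq0}\pi_\om(V^n)\xi_n$, forces each $\xi_n$ ($n\neq0$) to solve \eqref{coml2} and hence — by ergodicity via Proposition \ref{toerph}(i)$\Rightarrow$absence of such solutions — to vanish.
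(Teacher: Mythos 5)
Your one-line treatment of the easy direction is fine, but both of your routes for the substantive implication (ergodic $\Rightarrow$ uniquely ergodic) contain genuine gaps, and neither contains the idea the paper actually uses. The paper's proof goes through the gauge action: since $\r_z\circ\Phi_{\th,u}=\Phi_{\th,u}\circ\r_z$, every invariant state $\f$ produces invariant states $\f\circ\r_z$, and a computation on the generators $V^na$ (using $\f\lceil_\ga=\om_o$ from unique ergodicity of the base, which is the part you also have) shows $\oint\f\circ\r_z\,\frac{\di z}{2\pi\imath z}=\om$; extremality of $\om$ then forces $\f\circ\r_z=\om$ for all $z$, in particular $\f=\om$ (the argument of \cite{DFGR}, Theorem 4.5). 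Note that ergodicity of $\om$ enters \emph{only} through this extremality-of-an-average step. In your first route ergodicity of $\om$ is never used: you try to kill the coefficients $\psi(aV^n)$, $n\neq0$, by invariance averaging and \eqref{xco} alone. That cannot work, because when the cohomological equations \eqref{comc*} admit nontrivial solutions the skew-product has many invariant states with $\psi(aV^n)\neq0$ (cf. the parametrisation $T(\m)$ at the end of Section \ref{uniqueergo}); so whatever ``consistency check'' you have in mind must fail in general, and concretely the Ces\`aro average $\frac1m\sum_k\psi\big(\th^k(a)\,u^{(k)}_n V^n\big)$ carries a $k$-dependent cocycle that \eqref{xco} does not control.

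Your ``cleaner'' second route has two problems. First, the claim that ergodicity of $\om$ rules out invariant vectors in $\ch_n$, $n\neq0$ (equivalently, nontrivial solutions of \eqref{coml2}) is circular: in the paper that implication is Theorem \ref{mier} (iii)$\Rightarrow$(i), whose proof passes through (iii)$\iff$(iv), i.e. through the very proposition you are proving; Proposition \ref{toerph} by itself only translates ``no solutions'' into $E_1^{V_{\om_o,\th}}=E_1^{V_{\om,\Phi_{\th,u}}}$ and says nothing about extremality of $\om$. Second, and more seriously, even granting weak clustering and mean ergodicity of $V_{\om,\Phi_{\th,u}}$, the step ``hence $\frac1m\sum_k\Phi_{\th,u}^k(x)\to\om(x)I$ weakly for every $x$'' is a non sequitur: mean ergodicity gives convergence of matrix elements in $\ch_\om$, i.e. against $\pi_\om$-normal functionals, while your arbitrary invariant state $\psi$ may well be singular with respect to $\pi_\om$. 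If that inference were valid, every classical ergodic (hence weakly clustering) system would be uniquely ergodic, which is false. So the final evaluation $\psi(x)=\lim_m\psi\big(\frac1m\sum_k\Phi_{\th,u}^k(x)\big)=\om(x)$ is precisely the unique ergodicity you set out to prove. You need an input that controls \emph{all} invariant states simultaneously, and that is exactly what the gauge-averaging plus extremality argument supplies.
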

\begin{proof}
We need only prove that ergodicity implies unique ergodicity. 
We start by noticing that for the gauge automorphisms,
$$
\r_z\circ\Phi_{\th,u}=\Phi_{\th,u}\circ\r_z\,,\quad z\in\bt\,,
$$
and thus $\r_z\big(\mathcal{S}(\ga\rtimes_\a\bz)^{\Phi_{\th,u}}\big)=\mathcal{S}(\ga\rtimes_\a\bz)^{\Phi_{\th,u}}$, $z\in\bt$.

Now, we assume that $\om$ is an extreme invariant state.
The key property to point out is that the equality 
$$
\oint\f\circ\r_z\frac{\di z}{2\pi\imath z}=\om
$$
holds true for any $\f\in\mathcal{S}(\ga\rtimes_\a\bz)^{\Phi_{\th,u}}$. By a standard approximation argument,
it suffices to check the claimed
equality only on elements of the form $V^na$, $a\in\mathfrak{A}$ and $n\in\mathbb{Z}$.
To this end, first we note that $\f\lceil_\ga=\om_o$ by unique ergodicity. Then we have
\begin{align*}
\oint\f\big(\r_z(V^na)\big)\frac{\di z}{2\pi\imath z}=\f(V^na)\oint z^n\frac{\di z}{2\pi\imath z}=\f(V^na)\delta_{n,0}=\omega_0(a)\delta_{n,0}
\end{align*}

Now the conclusion follows by the same argument employed in \cite{DFGR}, Theorem 4.5. 
\end{proof}
We can now move onto the main theorem of the present section.
\begin{thm}
\label{mier}
Let $(\ga,\th)$ be uniquely ergodic with $\om_o\in\cs(\ga)$ as the unique invariant state. Then the following assertions are equivalent:
\begin{itemize}
\item[(i)] the cohomological equation \eqref{coml2} has no nontrivial solution for $n\neq0$;
\item[(ii)] the $C^*$-dynamical system $(\ga\rtimes_\a\bz, \Phi_{\th,u},\om)$ is weakly clustering;
\item[(iii)] the $C^*$-dynamical system $(\ga\rtimes_\a\bz, \Phi_{\th,u},\om)$ is ergodic;
\item[(iv)] the $C^*$-dynamical system $(\ga\rtimes_\a\bz, \Phi_{\th,u})$ is uniquely ergodic with $\omega\in\mathcal{S}(\ga\rtimes_\a\bz)$ 
as its unique invariant state.
\end{itemize}
Suppose in addition that $s(\om_o)\in Z(\ga^{**})$, then one of the previous conditions {\rm(i)-(iv)} turns out to be equivalent to
\begin{itemize}
\item[(v)] with the $u_n$ given in \eqref{coo}, the equation 
\begin{equation}
\label{compi}
\pi_{\om_o}(\a^{-n}(u_n))\ad{}\!_{V_{\om_o,\th}}(v)=v\,,\quad v\in\pi_{\om_o}(\ga)''\,,
\end{equation}
has no nontrivial solutions for each $n\neq0$.
\end{itemize}
\end{thm}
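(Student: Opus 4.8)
I would establish the cycle of equivalences among (i)--(iv) first, and then bring in (v) under the central support hypothesis. Since $(\ga,\th)$ is uniquely ergodic, $\om_o$ is weakly clustering, as is immediate from \eqref{xco}. Hence Proposition \ref{toerph}, together with its concluding assertion, gives at once that $(\ga\rtimes_\a\bz,\Phi_{\th,u},\om)$ is weakly clustering if and only if \eqref{coml2} has no nontrivial solution for $n\neq 0$, that is (i) $\Leftrightarrow$ (ii). The implication (ii) $\Rightarrow$ (iii) is the general fact that a weakly clustering invariant state is ergodic, and (iii) $\Leftrightarrow$ (iv) is precisely Proposition \ref{evsue}. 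To close the loop I would invoke (iv) $\Rightarrow$ (ii): for the uniquely ergodic system $(\ga\rtimes_\a\bz,\Phi_{\th,u})$, applying \eqref{xco} to $\om$ gives $\frac1n\sum_{k=0}^{n-1}\om(x\Phi_{\th,u}^k(y))\to\om(x)\om(y)$, which is exactly the weak clustering of $\om$. Thus (i)--(iv) are equivalent with no extra hypothesis.

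Now assume $s(\om_o)\in Z(\ga^{**})$, so that $\xi_{\om_o}$ is cyclic and separating for $\cn:=\pi_{\om_o}(\ga)''$ and $(\cn,\ch_{\om_o},\xi_{\om_o})$ is a standard von Neumann algebra with self-dual cone $\cp$ as in Section \ref{thecone}. Put $W_n:=\pi_{\om_o}(\a^{-n}(u_n))V_{\om_o,\th}\in\cu(\ch_{\om_o})$; as $\pi_{\om_o}(\a^{-n}(u_n))$ is a unitary of $\cn$ (recall $u_n\in\cu(\ga)$, as is clear from Lemma \ref{coo0}) and $V_{\om_o,\th}$ implements $\th$, hence normalizes $\cn$ and fixes $\xi_{\om_o}$, the unitary $W_n$ normalizes $\cn$ as well. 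In this notation \eqref{coml2} at level $n$ reads $W_n\eta=\eta$, while \eqref{compi} reads $\pi_{\om_o}(\a^{-n}(u_n))\ad_{V_{\om_o,\th}}(v)=v$. The implication (v) $\Rightarrow$ (i) is the easy one: evaluating a nonzero solution $v\in\cn$ of \eqref{compi} on $\xi_{\om_o}$ and using $V_{\om_o,\th}\xi_{\om_o}=\xi_{\om_o}$ yields $W_n(v\xi_{\om_o})=v\xi_{\om_o}$, with $v\xi_{\om_o}\neq 0$ because $\xi_{\om_o}$ is separating; hence \eqref{coml2} has a nontrivial solution and (i) fails.

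The substantial direction is (i) $\Rightarrow$ (v), i.e.\ manufacturing an operator solution of \eqref{compi} out of a Hilbert-space solution $\eta\neq 0$ of \eqref{coml2}. Since $\ga$ need not be commutative, the Murray--von Neumann/BT passage from $\ch_{\om_o}$ to $\pi_{\om_o}(\ga)''$ is unavailable, and the plan is to use the polar decomposition in the cone instead. Write $\eta=v_\eta|\eta|$ as in Proposition \ref{szcon}, with $|\eta|\in\cp$ and $v_\eta\in\cn$. Because $\pi_{\om_o}(\a^{-n}(u_n))$ is an isometry of $\cn$, Lemma \ref{cif0} gives $|W_n\eta|=|V_{\om_o,\th}\eta|$, and Lemma \ref{cif1} applied to $V_{\om_o,\th}$ gives $|V_{\om_o,\th}\eta|=V_{\om_o,\th}|\eta|$; together with $W_n\eta=\eta$ this forces $|\eta|=V_{\om_o,\th}|\eta|$. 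Since $(\ga,\th)$ is uniquely ergodic, $V_{\om_o,\th}$ is mean ergodic, so Lemma \ref{cif2} yields $|\eta|\in\overline{\br_+}\,\xi_{\om_o}$; after rescaling $|\eta|=\xi_{\om_o}$, whence $\eta=v_\eta\xi_{\om_o}$ with $0\neq v_\eta\in\cn$. Finally, $\pi_{\om_o}(\a^{-n}(u_n))\ad_{V_{\om_o,\th}}(v_\eta)$ belongs to $\cn$ and, evaluated on $\xi_{\om_o}$, equals $\pi_{\om_o}(\a^{-n}(u_n))V_{\om_o,\th}v_\eta\xi_{\om_o}=W_n\eta=\eta=v_\eta\xi_{\om_o}$; since $\xi_{\om_o}$ is separating for $\cn$, the two operators coincide, so $v_\eta$ is a nontrivial solution of \eqref{compi} and (v) fails.

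I expect the real obstacle to be precisely this last step: checking that everything needed to run the cone argument is in place --- namely $\pi_{\om_o}(\a^{-n}(u_n))\in\cn$, the normalization of both $\cn$ and $\cp$ by $V_{\om_o,\th}$, and the mean ergodicity that activates Lemma \ref{cif2} --- so that the modulus $|\eta|$ collapses to a multiple of $\xi_{\om_o}$ and the partial isometry part of $\eta$ is recognized as a genuine element of $\pi_{\om_o}(\ga)''$ satisfying \eqref{compi}. This is the point where Tomita--Takesaki theory, through the self-polar cone of Section \ref{thecone}, really enters; granted that input, the rest is bookkeeping.
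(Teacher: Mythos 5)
Your proposal is correct and follows essentially the same route as the paper: Proposition \ref{toerph} plus the weak clustering coming from unique ergodicity handles (i)--(iv), Proposition \ref{evsue} gives (iii)$\iff$(iv), and the equivalence with (v) is obtained exactly as in the paper via the cone machinery of Section \ref{thecone} (Proposition \ref{szcon}, Lemmata \ref{cif0}--\ref{cif2}) together with the separating property of $\xi_{\om_o}$ under the central-support hypothesis. The only blemish is that your two labels in the last part are swapped: the paragraph you call ``(v)$\Rightarrow$(i)'' (operator solution yields vector solution) is the contrapositive of (i)$\Rightarrow$(v), and the cone argument you call ``(i)$\Rightarrow$(v)'' actually establishes (v)$\Rightarrow$(i); since you prove both contrapositives, the equivalence stands unaffected.
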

\begin{proof}
(i)$\Longrightarrow$(ii) Since $(\ga,\th,\om_o)$ is uniquely ergodic, as we have seen in \eqref{xco}, we have
$$
\lim_{n\to+\infty}\Big(\frac1{n}\sum_{k=1}^{n-1}\om_o(a\th^k(b))\Big)=\om_o(a)\om_o(b)\,,\quad a,b\in\ga\,,
$$
and therefore it is weakly clustering. But this means $\dim\big(E_1^{V_{\om_o,\th}}\big)=1$. If (i) holds true, then by Proposition \ref{toerph},
$\dim\big(E_1^{V_{\om,\Phi_{\th,u}}}\big)=1$ as well, and therefore $(\ga\rtimes_\a\bz, \Phi_{\th,u},\om)$ is weakly clustering.

(ii)$\Longrightarrow$(iii) is clear since weak clustering implies ergodicity.

(iii)$\iff$(iv) is nothing else than the content of Proposition \ref{evsue}.

(iv)$\Longrightarrow$(i) Suppose that $(\ga\rtimes_\a\bz, \Phi_{\th,u},\om)$ is uniquely ergodic. Then
$$
\lim_{n\to+\infty}\Big(\frac1{n}\sum_{k=1}^{n-1}\om(a\Phi_{\th,u}^k(b))\Big)=\om(a)\om(b)\,,\quad a,b\in\ga\rtimes_\a\bz\,,
$$
and therefore it is weakly clustering. But this means $\dim\big(E_1^{V_{\om_o,\th}}\big)=1=\dim\big(E_1^{V_{\om,\Phi_{\th,u}}}\big)$. Thus (i) holds true again by Proposition \ref{toerph}.

(i)$\Longrightarrow$(v) is trivial, without any further assumption on the support of $\om_o$. Suppose now $s(\om_o)\in Z(\ga^{**})$. In this situation, if $\eta\in\ch_{\om_o}\smallsetminus\{0\}$ satisfies \eqref{coml2}, then by Lemmata  \ref{cif0} and \ref{cif1},
$$
|\eta|=\big|\pi_{\om_o}(\a^{-n}(u_n))V_{\om_o,\th}\eta\big|=\big|V_{\om_o,\th}\eta\big|=V_{\om_o,\th}|\eta|\,,
$$
and therefore $|\eta|$ is invariant. By Lemma \ref{cif2} and 
Proposition \ref{szcon}, we first deduce $|\eta|=c\xi_{\om_o}$, and then $\eta=cv_\eta\xi_{\om_o}$ for some partial isometry $v_\eta\in\pi_{\om_o}(\ga)''$, and some number $c>0$.

Therefore, if $\eta\neq0$, \eqref{coml2} leads to
$$
\big(\pi_{\om_o}(\a^{-n}(u_n))\ad{}\!_{V_{\om_o,\th}}(v_\eta)-v_\eta\big)\xi_{\om_o}=0\,,
$$
and thus (v)$\Longrightarrow$(i) follows since $\xi_{\om_o}$ is separating for $\pi_{\om_o}(\ga)''$.
\end{proof}
\begin{cor}
Suppose that one of the equivalent conditions {\rm(i)-(iv)} of Theorem \ref{mier} is satisfied. Then the skew-product $\F_{\th,u}\in\aut(\ga\rtimes_\a\bz)$ is minimal if 
$\om_o\in\cs(\ga)^\th$ is faithful.
\end{cor}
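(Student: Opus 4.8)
The plan is to read off minimality of $\F_{\th,u}$ from \emph{strict ergodicity} of the skew-product, that is, from unique ergodicity combined with faithfulness of the invariant state — the $C^*$-algebraic counterpart of the classical remark in \cite{Fu} that strict ergodicity implies minimality. Since the conditions {\rm(i)--(iv)} of Theorem~\ref{mier} are equivalent, assuming one of them forces in particular {\rm(iv)}: the $C^*$-dynamical system $(\ga\rtimes_\a\bz,\F_{\th,u})$ is uniquely ergodic with unique invariant state $\om=\om_o\circ E$. Because the canonical conditional expectation $E$ is faithful, the extra hypothesis that $\om_o$ be faithful makes $\om$ faithful on $\ga\rtimes_\a\bz$. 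Recalling that minimality of $(\ga\rtimes_\a\bz,\F_{\th,u})$ means, by definition, that the only $\F_{\th,u}$-invariant hereditary $C^*$-subalgebras are $\bc I$ and $\ga\rtimes_\a\bz$, this is exactly what remains to be shown.

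\textbf{First step.} I would invoke the norm convergence of the Ces\`aro averages valid for any uniquely ergodic unital $C^*$-dynamical system (\cite{AD}, Theorem~3.2), namely that $\frac1{n}\sum_{k=0}^{n-1}\F_{\th,u}^k(x)\to\om(x)I$ in norm for every $x\in\ga\rtimes_\a\bz$. Then, given a nonzero $\F_{\th,u}$-invariant hereditary $C^*$-subalgebra $\gh\subset\ga\rtimes_\a\bz$, pick a positive nonzero element $a\in\gh$; faithfulness of $\om$ yields $\om(a)>0$. Since $\gh$ is a norm-closed linear subspace stable under $\F_{\th,u}$, each Ces\`aro average $\frac1{n}\sum_{k=0}^{n-1}\F_{\th,u}^k(a)$ lies in $\gh$, hence so does its norm limit $\om(a)I$; dividing by $\om(a)>0$ gives $I\in\gh$.

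\textbf{Second step.} Here I would use the elementary observation that a hereditary $C^*$-subalgebra $\gh$ of a \emph{unital} $C^*$-algebra which contains the identity must be the whole algebra: for any positive $c$ one has $0\le c\le\|c\|I$ with $\|c\|I\in\gh$, so $c\in\gh$ by heredity, and the positive cone spans the algebra. Thus $\gh=\ga\rtimes_\a\bz$, and therefore the only $\F_{\th,u}$-invariant hereditary $C^*$-subalgebras of $\ga\rtimes_\a\bz$ are the trivial ones, i.e.\ $\F_{\th,u}$ is minimal in the sense of the paper's definition.

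\textbf{Main difficulty.} I do not expect a genuine obstacle: the argument is soft, resting only on the norm ergodic theorem for uniquely ergodic unital systems and on faithfulness of $\om$. The one point worth flagging is that the two hypotheses enter in a cleanly separated way — conditions {\rm(i)--(iv)} are used \emph{only} to secure unique ergodicity of the crossed product via Theorem~\ref{mier}{\rm(iv)}, while faithfulness of $\om_o$ (hence of $\om$) is precisely what upgrades unique ergodicity to strict ergodicity. In contrast with the finer results on unique ergodicity with respect to the fixed-point subalgebra, the specific nature (``continuous'' versus ``measurable non continuous'') of the solutions of the cohomological equations \eqref{compi} plays no role in this corollary.
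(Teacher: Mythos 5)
Your proposal is correct, but it follows a different route from the paper. The paper disposes of the corollary in one line by citing Longo--Peligrad \cite{LP}, Corollary 2.6, together with the two observations you also isolate: Theorem \ref{mier}(iv) gives unique ergodicity of $(\ga\rtimes_\a\bz,\F_{\th,u})$ with invariant state $\om=\om_o\circ E$, and faithfulness of $E$ transfers faithfulness from $\om_o$ to $\om$. You instead prove the implication ``strictly ergodic $\Rightarrow$ minimal'' from scratch: the norm ergodic theorem for uniquely ergodic unital systems (\cite{AD}, Theorem 3.2) pushes the Ces\`aro averages of any nonzero positive $a$ in an invariant hereditary $C^*$-subalgebra $\gh$ to $\om(a)I\in\gh$, faithfulness gives $\om(a)>0$, hence $I\in\gh$, and heredity ($0\le c\le\|c\|I$) forces $\gh$ to be everything. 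This argument is sound, and it even yields the slightly sharper statement that every nonzero invariant hereditary subalgebra is the whole crossed product, which is the intended reading of the paper's definition of minimality. What the paper's citation buys is brevity and placement of the corollary inside the general Longo--Peligrad framework of noncommutative topological dynamics; what your version buys is a self-contained, elementary proof that makes completely transparent where each hypothesis enters (unique ergodicity only through the norm convergence of the averages, faithfulness only to guarantee $\om(a)>0$), with no reliance on the external result.
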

\begin{proof}
The assertion directly follows by \cite{LP}, Corollary 2.6, taking into account that the conditions {\rm(i)-(iv)} of Theorem \ref{mier} are equivalent to the unique ergodicity of 
$\F_{\th,u}$, and that $\om=\om_o\circ E$, $E$ given in \eqref{gcxe}, is faithful if and only if $\om_o$ is so.
\end{proof}
\begin{rem}
\label{rem:parisom}
For a uniquely ergodic $C^*$-dynamical system $(\ga,\th,\om_o)$ such that $s(\om_o)\in Z(\ga^{**})$, we have incidentally seen that each solution of the equation
\eqref{compi} is necessarily a multiple of a partial isometry. 
\end{rem}

Much more than pointed out in Remark \ref{rem:parisom} is actually true. Indeed, for a non necessarily uniquely ergodic $C^*$-dynamical system, we have
\begin{prop}
\label{unme}
Let  $(\ga, \theta, \omega_0)$ be a weakly clustering $C^*$-dynamical system.
If $s(\omega_0)\in Z(\ga^{**})$, then the solutions of \eqref{compi} are, possibly trivial, multiples
of a unitary.
\end{prop}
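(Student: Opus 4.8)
The plan is to prove separately that $v^*v$ and $vv^*$ are scalar multiples of the identity; since $\|v^*v\|=\|vv^*\|$, once we know $v^*v=cI$ this forces $vv^*=cI$ as well, so that either $v=0$ or $c^{-1/2}v$ is a unitary in $\cam:=\pi_{\om_o}(\ga)''$, which is exactly the claim. Throughout put $V:=V_{\om_o,\th}$ and $w_n:=\pi_{\om_o}(\a^{-n}(u_n))\in\cu(\cam)$, so that \eqref{compi} reads $w_nVvV^*=v$; recall that $\xi_{\om_o}$ is cyclic and separating for $\cam$ since $s(\om_o)$ is central, and that $V$ is mean ergodic since $(\ga,\th,\om_o)$ is weakly clustering.

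The scalarity of $v^*v$ is the easy half. Taking adjoints in $w_nVvV^*=v$ gives $Vv^*V^*w_n^*=v^*$, and multiplying the two identities yields $v^*v=Vv^*vV^*$, so $v^*v$ commutes with $V$. Hence $(v^*v)\xi_{\om_o}$ is a $V$-fixed vector, a scalar multiple of $\xi_{\om_o}$ by mean ergodicity, and therefore $v^*v\in\bc I$ because $\xi_{\om_o}$ is separating for $\cam$.

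For $vv^*$ the idea is to move the problem into the crossed product. By Kaplansky density I would choose a net $(a_i)\subset\ga$ with $\|a_i\|\le\|v\|$ and $\pi_{\om_o}(a_i)\to v$ ultraweakly, and set $b_i:=\a^n(a_i)$, so that $V^na_i=b_iV^n$ in $\ga\rtimes_\a\bz$. Work in the covariant GNS representation $(\ch_\om,\pi_\om,W:=V_{\om,\F_{\th,u}},\xi_\om)$ of $(\ga\rtimes_\a\bz,\F_{\th,u},\om)$: by Proposition \ref{czni} the support $s(\om)$ is central, so $\xi_\om$ is cyclic and separating for $\cn:=\pi_\om(\ga\rtimes_\a\bz)''$, hence also for $\pi_\om(\ga)''$. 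Passing to a common subnet, let $\pi_\om(a_i)\to\tilde v$ and $\pi_\om(b_i)\to B$ ultraweakly; then $\tilde v,B\in\pi_\om(\ga)''$ with $B=\pi_\om(V^n)\tilde v\pi_\om(V^n)^*$, and the restriction map $\Psi\colon\pi_\om(\ga)''\to\cb(\ch_o)$ onto the reducing subspace $\ch_o:=[\pi_\om(\ga)\xi_\om]$ — which, as recalled before Lemma \ref{orto}, carries the GNS data of $\om_o$, $\Psi$ being a normal $*$-homomorphism onto $\cam$ — satisfies $\Psi(\tilde v)=v$ and $\tilde v\xi_\om=v\xi_{\om_o}=:\eta$. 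Since $\eta$ solves \eqref{coml2} (apply \eqref{compi} to $\xi_{\om_o}$), the computation in the proof of Proposition \ref{toerph} shows that $X\xi_\om=\pi_\om(V^n)\eta\in\ch_n$ is a $W$-fixed vector, where $X:=B\pi_\om(V^n)=\pi_\om(V^n)\tilde v\in\cn$; as $W\xi_\om=\xi_\om$ and $\xi_\om$ is separating for $\cn$, this yields $\ad_W(X)=X$. Expanding $\ad_W(B)\,\ad_W(\pi_\om(V^n))=B\pi_\om(V^n)$ by means of $\ad_W(\pi_\om(V^n))=\pi_\om((uV)^n)=\pi_\om(u_n)\pi_\om(V^n)$ (cf. \eqref{coo1}) and cancelling the unitary $\pi_\om(V^n)$, one obtains the ``right-hand'' cohomological equation $\ad_W(B)\pi_\om(u_n)=B$. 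From here I would argue exactly as in the $aa^*$-part of the proof of Proposition \ref{unco}: taking adjoints and multiplying the two resulting identities gives $\ad_W(BB^*)=\ad_W(B)\pi_\om(u_n)\pi_\om(u_n)^*\ad_W(B^*)=BB^*$, so $BB^*$ is an $\ad_W$-fixed element of $\pi_\om(\ga)''$. Any such element is scalar: if $\ad_W(Y)=Y$ then $Y\xi_\om$ is a $W$-fixed vector in $\ch_o$, hence $V_{\om_o,\th}$-fixed, hence a multiple of $\xi_\om$ by mean ergodicity, whence $Y\in\bc I$ by separation. Therefore $BB^*=\mu I$; conjugating by $\pi_\om(V^n)$ gives $\tilde v\tilde v^*=\mu I$, and applying $\Psi$ gives $vv^*=\Psi(\tilde v)\Psi(\tilde v)^*=\mu I$, completing the argument.

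The routine first half aside, the hard part is this transport: one must carry \eqref{compi} faithfully into $\pi_\om(\ga\rtimes_\a\bz)''$ — where the cocycle identity $(uV)^n=u_nV^n$ can be exploited just as at the $C^*$-level in Proposition \ref{unco} — and then compress it back to $\cam$. This uses crucially the centrality of $s(\om)$, both to promote the Hilbert-space relation $WX\xi_\om=X\xi_\om$ to the operator relation $\ad_W(X)=X$ and to guarantee that $\xi_\om$ is separating, together with the observation that weak clustering alone already forces the $\ad_W$-fixed part of $\pi_\om(\ga)''$ to be $\bc I$.
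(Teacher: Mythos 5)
Your first half is exactly the paper's: from \eqref{compi} and its adjoint you get $\ad_{V_{\om_o,\th}}(v^*v)=v^*v$, then mean ergodicity of $V_{\om_o,\th}$ (weak clustering) plus the separating property of $\xi_{\om_o}$ (centrality of $s(\om_o)$) give $v^*v\in\bc I$. The second half is where you genuinely diverge. The paper never leaves $\pi_{\om_o}(\ga)''$ acting on $\ch_{\om_o}$ and settles $vv^*$ in one line, ``proceeding as in Proposition \ref{unco}''; you instead lift the solution to the crossed-product von Neumann algebra $\pi_\om(\ga\rtimes_\a\bz)''$ via Kaplansky density, use the computation of Proposition \ref{toerph} to see that $\pi_\om(V^n)\tilde v\,\xi_\om$ is $V_{\om,\F_{\th,u}}$-fixed, promote this to the operator identity, cancel $\pi_\om(V^n)$ to obtain the right-handed equation $\ad_W(B)\pi_\om(u_n)=B$ (the identity you need is $(uV)^n=u_nV^n$ from Lemma \ref{coo0} rather than \eqref{coo1}, a harmless mis-citation), conclude $BB^*\in\bc I$, and compress back to $\pi_{\om_o}(\ga)''$ with the normal homomorphism $\Psi$. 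Each of these steps checks out, and in effect you are making explicit, at the von Neumann level, the ``$aa^*$'' trick that the paper only gestures at; your opening sentence (``once we know $v^*v=cI$ this forces $vv^*=cI$'') is phrased backwards, but since you then prove scalarity of $vv^*$ independently and only use $\|v^*v\|=\|vv^*\|$ to identify the two constants, this is cosmetic.

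The substantive caveat is your appeal to Proposition \ref{czni}. The implication you need, $s(\om_o)\in Z(\ga^{**})\Rightarrow s(\om)\in Z\big((\ga\rtimes_\a\bz)^{**}\big)$, is stated there only under the additional hypothesis $\om_o\in\cs(\ga)^\a$, which is not in the statement of the present proposition and which the paper explicitly flags as being used nowhere except in Corollary \ref{inczfx}. This hypothesis is not a removable convenience in your route: you really need $\xi_\om$ to be separating for $\pi_\om(\ga\rtimes_\a\bz)''$, both to pass from $WX\xi_\om=X\xi_\om$ to $\ad_W(X)=X$ and to pass from $Y\xi_\om\in\bc\xi_\om$ to $Y\in\bc I$, and without $\a$-invariance of $\om_o$ the vector $\xi_\om$ need not separate even $\pi_\om(\ga)''$ (its behaviour on the components $\ch_n$, $n\neq0$, is governed by $\om_o\circ\a^{-n}$). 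By contrast, the paper's argument uses only the separating property of $\xi_{\om_o}$ for $\pi_{\om_o}(\ga)''$. So, read against the paper's standing assumptions your proof is correct and usefully more detailed than the original on the $vv^*$ step, but as a proof of the proposition as stated it consumes an extra hypothesis; you should either add $\om_o\in\cs(\ga)^\a$ to your statement or find a substitute for the transport into the crossed product.
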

\begin {proof}
We start by claiming that, under our hypotheses, any $a\in\pi_{\om_o}(\ga)''$ such that $\ad{}\!_{V_{\om_o,\th}}(a)=a$ must be
a scalar multiple of the identity, \emph {i.e.} $a=\lambda I$, for some $\lambda\in\mathbb{C}$.
Now let $v\in\pi_{\om_o}(\ga)''$ be a solution of $\pi_{\om_o}(\a^{-n}(u_n))\ad{}\!_{V_{\om_o,\th}}(v)=v$, for some
$n\in\mathbb{Z}$. Taking the adjoint on both sides of the equality yields
 $\ad{}\!_{V_{\om_o,\th}}(v^*)\pi_{\om_o}(\a^{-n}(u_n^*))=v^*$. If we now multiply the latter by the former
equality, we get $\ad{}\!_{V_{\om_o,\th}}(v^*v)=v^*v$, which means $v^*v=\mu I$ for some positive
constant. Proceeding as in the proof of Proposition \ref{unco}, we see that $vv^*$ must be a multiple of the identity as well, which gives the conclusion. 
All is left to do is prove the claim made at the beginning.

Indeed, if $a\in\pi_{\om_o}(\ga)''$ is such that $\ad{}\!_{V_{\om_o,\th}}(a)=a$, then 
$V_ {\omega_0, \theta}\,a=aV_ {\omega_0, \theta}$. If we now compute this equality on
$\xi_ {\omega_0}$, we find $V_ {\omega_0, \theta}\,a\xi_{\omega_0}=a\xi_{\omega_0}$, and so
$a\xi_{\omega_0}=\lambda \xi_{\omega_0}$, for some $\lambda\in\mathbb{C}$, since
$\rm{Ran}\big(E_1^{V_{\omega_0,\theta}}\big)=\rm{Ker}(I-V_{\omega_0, \theta})=\mathbb{C}\xi_{\omega_0}$ thanks to the assumption that the system is weakly clustering. But, because the support of $\omega_0$ is central, the vector $\xi_{\omega_0}$ is also separating
for the von Neumann algebra $\pi_{\omega_0}(\ga)^{''}$, which means the above equality is possible if and only
if $a=\lambda I$, as maintained.
\end{proof}
A straightforward application of the faithfulness of $E$ given in \eqref{gcxe} and Proposition \ref{czni} yield
\begin{cor}
\label{inczfx}
Let $(\mathfrak{A}, \theta, \omega_0)$ be a $C^*$-dynamical system.
\begin{itemize}
\item[(i)] If $(\mathfrak{A}, \theta, \omega_0)$ is strictly ergodic, then 
$(\mathfrak{A}\rtimes_\alpha\mathbb{Z}, \Phi_{\theta, u}, \omega)$ is strictly ergodic if and only if \eqref{coml2} has no nontrivial solutions for $n\neq0$.
\item[(ii)] If $(\mathfrak{A}, \theta, \omega_0)$ is sharply ergodic, then 
$(\mathfrak{A}\rtimes_\alpha\mathbb{Z}, \Phi_{\theta, u}, \omega)$ is sharply
ergodic if and only if \eqref{compi} has no nontrivial solutions for $n\neq0$,
provided that $\om_o\in\cs(\ga)^\a$.
\end{itemize}
\end{cor}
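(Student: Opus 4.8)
The plan is to read off both equivalences from the accumulated machinery, using only two elementary facts: that $E$ in \eqref{gcxe} is faithful, so $\om=\om_o\circ E$ is faithful precisely when $\om_o$ is, and that by Proposition \ref{czni} the centrality of $s(\om_o)$ in $\ga^{**}$ is equivalent to that of $s(\om)$ in $(\ga\rtimes_\a\bz)^{**}$ under the standing hypothesis $\om_o\in\cs(\ga)^\a$. The strategy is: first note that strict (resp.\ sharp) ergodicity of the skew-product system means, by definition, that $(\ga\rtimes_\a\bz,\F_{\th,u})$ is uniquely ergodic with $\om$ as its unique invariant state and, in the sharp case, that in addition $s(\om)$ is central; then translate the unique ergodicity of the skew-product system into the vanishing of the relevant cohomological equation via Theorem \ref{mier}.

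For part (i), assume $(\ga,\th,\om_o)$ is strictly ergodic, i.e.\ uniquely ergodic with $\om_o$ faithful. Then Theorem \ref{mier} applies: conditions (i)--(iv) there are equivalent, and condition (iv) says exactly that $(\ga\rtimes_\a\bz,\F_{\th,u})$ is uniquely ergodic with unique invariant state $\om$. Hence $(\ga\rtimes_\a\bz,\F_{\th,u},\om)$ is uniquely ergodic if and only if \eqref{coml2} has no nontrivial solutions for $n\neq 0$ (equivalence (i)$\iff$(iv)). Finally, since $E$ is faithful, $\om=\om_o\circ E$ is faithful whenever $\om_o$ is; combining, $(\ga\rtimes_\a\bz,\F_{\th,u},\om)$ is uniquely ergodic \emph{and} $\om$ is faithful — that is, strictly ergodic — precisely when \eqref{coml2} has only trivial solutions for $n\neq 0$.

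For part (ii), assume $(\ga,\th,\om_o)$ is sharply ergodic, so it is strictly ergodic and moreover $s(\om_o)\in Z(\ga^{**})$; then part (i) gives, under the extra assumption $\om_o\in\cs(\ga)^\a$, that the skew-product system is strictly ergodic iff \eqref{coml2} has no nontrivial solutions for $n\neq 0$. Under $s(\om_o)$ central, Theorem \ref{mier} further supplies the equivalence of (i)--(iv) with condition (v), namely that \eqref{compi} has no nontrivial solutions for $n\neq 0$; so one may replace \eqref{coml2} by \eqref{compi} in the criterion. It remains to see that the skew-product system is in fact \emph{sharply} ergodic, i.e.\ that $s(\om)$ is central as well: this is exactly the forward implication of Proposition \ref{czni}, valid since $\om_o\in\cs(\ga)^\a$ (it is for this single step that the invariance hypothesis is needed, as flagged in the remark preceding Proposition \ref{czni}). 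Assembling these pieces: strict ergodicity of the skew-product plus centrality of $s(\om)$ — i.e.\ sharp ergodicity — holds iff \eqref{compi} has only trivial solutions for $n\neq 0$.

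I do not expect a genuine obstacle here; the content is entirely in Theorem \ref{mier} and Proposition \ref{czni}, and the corollary is a bookkeeping exercise in matching definitions. The only point requiring a moment's care is to verify that \emph{every} hypothesis invoked — faithfulness of $\om_o$ for the $E$-faithfulness argument, centrality of $s(\om_o)$ to unlock condition (v) of Theorem \ref{mier}, and $\om_o\in\cs(\ga)^\a$ to run the reverse implication of Proposition \ref{czni} — is indeed available in each of (i) and (ii), which it is.
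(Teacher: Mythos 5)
Your proposal is correct and follows exactly the route the paper intends: the paper records this corollary as a straightforward application of the faithfulness of $E$ from \eqref{gcxe}, Proposition \ref{czni}, and Theorem \ref{mier}, which is precisely the bookkeeping you carry out. One cosmetic slip: the implication you need (centrality of $s(\om_o)$ implies centrality of $s(\om)$, requiring $\om_o\in\cs(\ga)^\a$) is what the paper calls the \emph{reverse} implication of Proposition \ref{czni}, not the forward one, but you invoke the mathematically correct direction with the correct hypothesis.
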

Let $(\ga,\th,\om_o)$ be a $C^*$-dynamical system such that $(\ga,\th)$ is topologically ergodic. We recall that, by Proposition \ref{unco}, any nontrivial solution of the cohomological equation \eqref{comc*}, $n\in\bz$, is a multiple of a single unitary. Therefore, there is a one-to-one correspondence between solutions $v\in\ga$ satisfying \eqref{comc*} and their images $\pi_{\om_o}(v)\in\pi_{\om_o}(\ga)$.

In correspondence with any $C^*$-dynamical system $(\ga,\th,\om_o)$, we can recover another $C^*$-dynamical system $([\ga],[\th],[\om_o])$, where 
$[\ga]:=\ga/\ker(\pi_{\om_o})$, 
$$
[\th]([a]):=\th(a)\,,\,\,[\om_o]([a]):=\om_o(a)\,,\quad a\in[a]
$$
are well defined for each $[a]\in[\ga]$ because $\th(\ker(\pi_{\om_o}))=\ker(\pi_{\om_o})$, and 
$$
\ker(\pi_{\om_o})\subset\gpn_{\om_o}:=\{a\in\ga\mid\om_o(a^*a)=0\}.
$$
In such a way, with 
$$
\pi_{[\om_o]}([a]):=\pi_{\om_o}(a) \,,\quad a\in[a]\,,
$$
for $[a]\in[\ga]$,
its GNS covariant representation $\big(\ch_{[\om_o]},\pi_{[\om_o]}, V_{[\om_o],[\th]},\xi_{[\om_o]}\big)$ coincides, up to unitary equivalence, with
$(\ch_{\om_o}, [\pi_{\om_o}],V_{\om_o,\th},\xi_{\om_o})$. If in addition $\om_o$ is $\a$-invariant, we also have $\a(\ker(\pi_{\om_o}))=\ker(\pi_{\om_o})$ and thus the cohomological equations are meaningful in $[\ga]$, obtaining for $n\in\bz$,
\begin{equation}
\label{comcq*}
[\a]^{-n}[(u_n)][\th]([v])=[v]\,. 
\end{equation}

Therefore, there is another one-to-one correspondence between solutions of \eqref{comc*} in $\ga$ and solutions of \eqref{comcq*} in $[\ga]$.

Putting the two things together, we have
\begin{rem}
For the topologically ergodic $C^*$-dynamical system $(\ga,\th,\om_o)$, there is a one-to-one correspondence
$$
\ga\ni v\leftrightarrow\pi_{\om_o}(v)\in\pi_{\om_o}(\ga)
$$
between solutions $v$ in $\ga$ of the \eqref{comc*} and their images $\pi_{\om_o}(v)$ in $\pi_{\om_o}(\ga)$.

If in addition $\om_o\in\cs(\ga)^\a$, there is another one-to-one correspondence
$$
\ga\ni v\leftrightarrow[v]\in[\ga]
$$
between solutions $v$ in $\ga$ of the \eqref{comc*}  and solutions $[v]$ in $[\ga]$ of  \eqref{comcq*}.
\end{rem}

\section{Unique ergodicity with respect to the fixed-point subalgebra}
\label{uniqueergo}

In analogy with the terminology used in \cite{DFGR, F4}, we start with the following definition relative to the splitting of the solutions of the cohomological equations \eqref{coml2} in 
\lq\lq continuous" and \lq\lq measurable non-continuous" ones.
\begin{defin}
\label{dcmn} 
Let $(\ga,\th,\om_o)$ be $C^*$-dynamical system, $\a\in\aut(\ga)$ satisfying \eqref{inter} for some $u\in\mathcal{U}(\ga)$, and for each fixed $n\in\bz$ consider the cohomological equation \eqref{coml2}. 

The solutions $\eta\in\ch_{\om_o}$
of \eqref{coml2} of the form $\eta=\pi_{\om_o}(a)\xi_{\om_o}$ for some $a\in\ga$, satisfying \eqref{comc*}, are called 
\lq\lq continuous", whereas the remaining ones are called \lq\lq measurable non-continuous".
\end{defin} 

For $n_o\in\bz$, we define $S_{n_o}\subset\ga\rtimes_\a\bz$ as
$$
S_{n_o}:=V^{n_o}\big\{a\mid a\in\ga\,\,\text{satisfying \eqref{comc*} for $n=n_o$}\big\}=V^{n_o}W_{n_o}\,,
$$
where the $W_{n_o}\subset\ga$ are made of all solutions of the coomological equation \eqref{comc*} for the given $n_o\in\bz$. We note that if $(\ga,\th)$ is topologically ergodic, then $W_{n_o}=\bc V^{n_o}w_{n_o}$
is generated by a single unitary $w_{n_o}$ ({\it cf.} Proposition \ref{unco}), as long as it is not trivial.
\begin{prop}
\label{str}
For the 
$C^*$-dynamical system $(\ga\rtimes_\a\bz,\F_{\th,u})$, we have for $m,n\in\bz$,
\begin{itemize}
\item[(i)] $S_m^*=S_{-m}$, $S_mS_n\subset S_{m+n}$;
\item[(ii)] $\big(\ga\rtimes_\a\bz\big)^{\F_{\th,u}}=\overline{{\rm span}\{S_n\mid n\in\bz\}}^{\|\,\,\,\|}$.
\end{itemize}
In addition, if $(\ga,\th)$ is topologically ergodic, which happens when $(\ga,\th)$ uniquely ergodic, then
\begin{equation*}
G_{\th,u}:=\big\{n\in\bz\mid S_n\neq\{0\}\big\}
\end{equation*}
is a subgroup of $\bz$.
\end{prop}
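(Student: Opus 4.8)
The statement has three parts: (i) the symmetry and multiplicativity relations for the sets $S_m$; (ii) the identification of the fixed-point subalgebra $(\ga\rtimes_\a\bz)^{\F_{\th,u}}$ as the closed span of the $S_n$; and (iii) the fact that, under topological ergodicity of $(\ga,\th)$, the set $G_{\th,u}$ of ``active levels'' is a subgroup of $\bz$. I would treat them in this order, since (iii) is essentially a formal consequence of (i) once the single-unitary structure of Proposition \ref{unco} is invoked.

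\textbf{Part (i).} For $S_m^*=S_{-m}$, take a generic element $V^m a\in S_m$ with $a\in\ga$ satisfying \eqref{comc*} at level $m$, i.e. $\a^{-m}(u_m)\th(a)=a$; then $(V^m a)^*=a^*V^{-m}=V^{-m}\a^{m}(a^*)$, and one checks by a direct manipulation (using \eqref{coo} and the cocycle-type identity $u_{m+n}=u_m\a^m(u_n)$ implicit in Lemma \ref{coo0}, plus the fact that taking adjoints reverses \eqref{comc*}) that $\a^{m}(a^*)$ solves \eqref{comc*} at level $-m$; hence $(V^m a)^*\in S_{-m}$, and the reverse inclusion is symmetric. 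For $S_mS_n\subset S_{m+n}$, multiply $V^m a$ and $V^n b$ with $a,b$ solutions at levels $m,n$: $V^m a V^n b=V^{m+n}\a^{n}(a)b$ (using $aV^n=V^n\a^{-n}(a)$ appropriately), and the point is that $\F_{\th,u}$ is multiplicative, so $\F_{\th,u}(V^m a V^n b)=\F_{\th,u}(V^m a)\F_{\th,u}(V^n b)=(V^m a)(V^n b)$, i.e. the product is again $\F_{\th,u}$-invariant and lies in $\ga V^{m+n}$; by the uniqueness of the Fourier coefficient this forces $\a^{n}(a)b$ (or whatever the precise combination is) to solve \eqref{comc*} at level $m+n$. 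The cleanest route here is to avoid the explicit $u_n$-bookkeeping altogether: an element $x\in\ga\rtimes_\a\bz$ lies in $S_{n}$ \emph{iff} $x=V^n a$ for some $a\in\ga$ and $\F_{\th,u}(x)=x$; once phrased this way, $S_m^*=S_{-m}$ and $S_mS_n\subseteq S_{m+n}$ are immediate from $\F_{\th,u}$ being a $*$-automorphism together with the grading $\ga V^m\cdot\ga V^n\subseteq\ga V^{m+n}$, $(\ga V^m)^*=\ga V^{-m}$.

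\textbf{Part (ii).} Let $x\in(\ga\rtimes_\a\bz)^{\F_{\th,u}}$. By the Ces\`aro Fourier expansion \eqref{cesexp}, $x=\sum_{n\in\bz}V^n c_n(x)$ with $c_n(x)=E(V^{-n}x)$, the convergence being in norm in the Ces\`aro sense. Since $\F_{\th,u}$ commutes with each gauge automorphism $\r_z$ (this is exactly the relation used in the proof of Proposition \ref{evsue}), $\F_{\th,u}$ preserves each spectral subspace $\ga V^n$, hence commutes with the projection $x\mapsto V^n c_n(x)$; therefore $\F_{\th,u}(x)=x$ forces $\F_{\th,u}(V^n c_n(x))=V^n c_n(x)$ for every $n$, i.e. $V^n c_n(x)\in S_n$. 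Thus $x$ lies in the closed span of $\bigcup_n S_n$, proving $\subseteq$; the reverse inclusion is trivial since each $S_n$ consists of fixed elements and the fixed-point subspace is norm-closed. (The fact that it is a $*$-subalgebra is then automatic from part (i), or directly from $\F_{\th,u}$ being a $*$-automorphism.)

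\textbf{Part (iii).} Assume $(\ga,\th)$ topologically ergodic (which holds whenever $(\ga,\th)$ is uniquely ergodic, by item (b) after Definition \ref{abdy}). By Proposition \ref{unco}, each nonzero $W_n$ equals $\bc w_n$ for a unitary $w_n\in\ga$, so $S_n\ne\{0\}$ iff $W_n\ne\{0\}$ iff $V^n w_n$ is a \emph{unitary} element of $\ga\rtimes_\a\bz$ lying in the fixed-point algebra. Now: $0\in G_{\th,u}$ since $S_0\supseteq\bc I$. If $n\in G_{\th,u}$, then $S_{-n}=S_n^*\ne\{0\}$ by (i), so $-n\in G_{\th,u}$. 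If $m,n\in G_{\th,u}$, pick unitaries $V^m w_m\in S_m$ and $V^n w_n\in S_n$; their product is a \emph{unitary} element of $S_{m+n}$ by (i), in particular nonzero, so $m+n\in G_{\th,u}$. Hence $G_{\th,u}$ is a subgroup of $\bz$.

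\textbf{Expected main obstacle.} None of the three parts is deep; the only place where care is needed is keeping the conjugation identities between $V^n$, $a$, $u_n$, $\a^{\pm n}$ straight in part (i) if one insists on the explicit description via \eqref{comc*}. I would sidestep this entirely by using the ``fixed element in the spectral subspace $\ga V^n$'' characterization of $S_n$ throughout, which reduces (i) and (ii) to the elementary observations that $\F_{\th,u}$ is a $*$-automorphism preserving the $\bz$-grading and that the Fourier projections are $\F_{\th,u}$-equivariant; the genuinely new input is confined to part (iii), where it is supplied wholesale by Proposition \ref{unco} (unitarity of the generators of $W_n$). The one subtlety worth a sentence in the write-up is why $S_{m+n}\ne\{0\}$ really does follow from the product $V^m w_m\cdot V^n w_n$ being nonzero: this is because that product is manifestly a unitary (product of two unitaries in $\ga\rtimes_\a\bz$), hence certainly not $0$, and it lies in $\ga V^{m+n}$ and is $\F_{\th,u}$-fixed, so it witnesses $S_{m+n}\supsetneq\{0\}$.
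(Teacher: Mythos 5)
Your proposal is correct and takes essentially the same route as the paper: part (ii) is the Ces\`aro--Fourier expansion argument of Proposition \ref{toerp}, part (i) is the graded $*$-automorphism computation (your characterization of $S_n$ as the $\F_{\th,u}$-fixed elements of $V^n\ga$ is just a cleaner packaging of the paper's explicit identities $(V^ma)^*=V^{-m}\a^m(a^*)$ and $V^maV^nb=V^{m+n}\a^{-n}(a)b$ --- note your tentative $\a^{n}(a)b$ has the wrong sign, but you hedge and then sidestep it), and part (iii) invokes Proposition \ref{unco} so that the generators $V^nw_n$ are unitaries, whose products are therefore nonzero, exactly as in the paper. The only additions worth keeping are your explicit remark on why the product being unitary forces $S_{m+n}\neq\{0\}$ and the equivariance of the Fourier projections under $\F_{\th,u}$, both of which the paper leaves implicit.
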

\begin{proof}
(ii) directly follows by the lines in Proposition \ref{toerp}. 
We also note that if $V^ma\in S_m$ and $V^nb\in S_n$, then 
\begin{align*}
(V^ma)^*&=V^{-m}(V^ma^*V^{-m})=V^{-m}\a^m(a^*)\,,\\
V^maV^nb&=V^{m+n}(V^{-n}aV^n)b=V^{m+n}\a^{-n}(a)b\,,
\end{align*}
and $\a^m(a^*)$, $\a^{-n}(a)b$ satisfy \eqref{comc*} for $-m$ and $m+n$, respectively. Therefore, (i) holds true.

Suppose now that $(\ga,\th)$ is topologically ergodic. By Proposition \ref{unco}, if $S_n\neq\{0\}$, it is generated by the single unitary $V^nw_n$ and therefore $G_{\th,u}$, seen as a subset of $\bz$, is closed by taking sums and opposite by (i), that is it is a subgroup of $\bz$.
\end{proof}
The previous considerations tell us that if $(\ga,\th)$ is topologically ergodic, then $G_{\th,u}=l_{\th,u}\bz$ for some $l_{\th,u}\in\bn$. Note that $l_{\theta, u}$ is nothing but  
$\min\{n>0\mid S_n\neq \{0\}\}$.

In the case when $(\ga,\th)$ is topologically ergodic, we define a linear map
$E_{\F_{\th,u}}$ from the crossed product $C^*$-algebra to itself by setting
\begin{equation}
\label{geu}
E_{\F_{\th,u}}(x):=\sum_{l\in G_{\th,u}}\om_o\big(w_{-l}E(V^{-l}x)\big)V^lw_l\,,\quad x\in\mathfrak{A}\rtimes_{\alpha}\mathbb{Z}\,,
\end{equation}
where $E$ is the conditional expectation in \eqref{gcxe}, the $w_l$ are the unique unitary generating the $W_l$, and the convergence of the infinite sum is meant in norm in the sense of Ces\`aro.

If we define
$$
\widetilde{w}_n:=\left\{\begin{array}{ll}
                     w_n & \!\!\text{if}\,\, n\in G_{\th,u}\,, \\
                      0 & \!\!\text{otherwise}\,,
                    \end{array}
                    \right.
$$
\eqref{geu} becomes
$$
E_{\F_{\th,u}}(x):=\sum_{n\in\bz}\om_o\big(\widetilde w_{-n}E(V^{-n}x)\big)V^n\widetilde w_n\quad x \in\mathfrak{A}\rtimes{_\alpha} \mathbb{Z}\,.
$$
We also define
$$
\ga_{\th,u}:=\overline{{\rm span}\big\{V^m \mathfrak{A}\mid m\in G_{\th,u}\big\}}^{\|\,\,\|}\,.
$$

It is obvious that $\ga_{\th, u}$ is a a $C^*$-subalgebra of $\ga\rtimes_\a\bz$ containing the fixed-point subalgebra $(\ga\rtimes_\a\bz)^{\Phi_{\theta, u}}$ of the automorphism $\Phi_{\theta, u}$.
\begin{prop}
\label{efwpa}
The map in \eqref{geu} is a conditional expectation onto the fixed-point subalgebra $(\mathfrak{A}\rtimes_{\alpha}\mathbb{Z})^{\F_{\th,u}}$ satisfying $E_{\F_{\th,u}}\circ \F_{\th,u}=E_{\F_{\th,u}}$.

If in addition $(\ga,\th)$ is uniquely ergodic, for each $x\in\ga_{\th,u}$ we have
$$
\lim_{n\to+\infty}\Big(\frac1{n}\sum_{k=0}^{n-1}\F^k_{\th,u}(x)\Big)=E_{\F_{\th,u}}(x)\,,
$$
in the norm topology.
\end{prop}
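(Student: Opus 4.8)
The plan is to reduce both assertions to the action of $\F_{\th,u}$ on the Fourier slices $V^m\ga\subset\ga\rtimes_\a\bz$, $m\in\bz$. Using Lemma~\ref{coo0} and $V^ma=\a^m(a)V^m$ one sees that $\F_{\th,u}$ maps each slice into itself and that, identifying $V^m\ga$ with $\ga$ via $V^mb\leftrightarrow b$, it acts there as the map $\Psi_m\colon\ga\to\ga$, $\Psi_m(b):=\a^{-m}(u_m)\th(b)$; in particular $\F_{\th,u}^k(V^mb)=V^m\Psi_m^k(b)$. Each $\Psi_m$ is an isometric linear bijection of the Banach space $\ga$ whose set of fixed points is exactly $W_m$ (the solution set of \eqref{comc*}, cf. Proposition~\ref{unco}), so it is fixed-point free precisely when $m\notin G_{\th,u}$. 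The observation that makes Fourier analysis effective is that, for $m\in G_{\th,u}$, the cohomological unitary $w_m$ satisfies $\a^{-m}(u_m)\th(w_m)=w_m$, i.e. $\a^{-m}(u_m)=w_m\th(w_m^*)$; substituting this into $\Psi_m^k(b)=\big(\prod_{j=0}^{k-1}\th^j(\a^{-m}(u_m))\big)\th^k(b)$ and telescoping yields the clean formula $\Psi_m^k(b)=w_m\,\th^k(w_m^*b)$ for all $b\in\ga$ and $k\geq0$, so that on $V^m\ga$ the automorphism $\F_{\th,u}$ is conjugate, through left multiplication by $w_m$, to $\th$ itself.

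Granting this, the second assertion is quick. For $x=V^mb$ with $m\in G_{\th,u}$ and $b\in\ga$,
$$
\frac1{n}\sum_{k=0}^{n-1}\F_{\th,u}^k(V^mb)=V^mw_m\Big(\frac1{n}\sum_{k=0}^{n-1}\th^k(w_m^*b)\Big)\xrightarrow[n\to+\infty]{}\om_o(w_m^*b)\,V^mw_m
$$
in norm, since $(\ga,\th)$ is uniquely ergodic and hence the Ces\`aro averages of $\th$ converge in norm to $\om_o(\,\cdot\,)I$ (Theorem~3.2 of \cite{AD}; cf. \eqref{xco}). As $E(V^{-m}(V^mb))=b$, formula \eqref{geu} gives $E_{\F_{\th,u}}(V^mb)=\om_o(w_{-m}b)\,V^mw_m=\om_o(w_m^*b)\,V^mw_m$, so the limit above is exactly $E_{\F_{\th,u}}(V^mb)$. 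The averages $\frac1{n}\sum_{k=0}^{n-1}\F_{\th,u}^k$ are contractions carrying $\ga_{\th,u}$ into itself, and the elements $V^mb$ ($m\in G_{\th,u}$, $b\in\ga$) span a dense subspace of $\ga_{\th,u}$; an $\varepsilon/3$-argument then shows that $\frac1{n}\sum_{k=0}^{n-1}\F_{\th,u}^k(x)$ is Cauchy for every $x\in\ga_{\th,u}$, with a limit that by continuity agrees on the dense subspace with the right-hand side of \eqref{geu}. Hence that series does converge in norm on $\ga_{\th,u}$ and its sum is $\lim_n\frac1{n}\sum_{k=0}^{n-1}\F_{\th,u}^k(x)$, which is the second assertion.

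For the first assertion I would change the generator of the crossed product so as to make the fixed-point algebra transparent. Set $\ell:=l_{\th,u}$ and $e:=V^\ell w_\ell\in\cu(\ga\rtimes_\a\bz)$, a $\F_{\th,u}$-fixed unitary; then $\ga_{\th,u}$ is itself the crossed product $\ga\rtimes_\b\bz$ with $\b:=\ad_e|_{\ga}\in\aut(\ga)$ and canonical unitary $e$, and on $\ga_{\th,u}$ the automorphism $\F_{\th,u}$ acts as $\th$ on $\ga$ and fixes $e$. Since $\ga^\th=\bc I$ by topological ergodicity, one gets $\big(\ga\rtimes_\a\bz\big)^{\F_{\th,u}}=(\ga_{\th,u})^{\F_{\th,u}}=C^*(e)$ (in accordance with Proposition~\ref{str}(ii)), and in these coordinates \eqref{geu} reads $\sum_k e^k\delta_k\mapsto\sum_k\om_o(\delta_k)\,e^k$. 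I would then present $E_{\F_{\th,u}}$ as the composition $E_{\F_{\th,u}}=\ce\circ F$, where $F:=\frac1{\ell}\sum_{\zeta^\ell=1}\r_\zeta$ is the conditional expectation of $\ga\rtimes_\a\bz$ onto $\ga_{\th,u}$ obtained by averaging the gauge action over the $\ell$-th roots of unity, and $\ce:=\lim_n\frac1{n}\sum_{k=0}^{n-1}\F_{\th,u}^k|_{\ga_{\th,u}}$ is the norm-convergent Ces\`aro limit supplied by the second assertion; one checks on the slices $V^mb$ that $\ce\circ F$ equals the map in \eqref{geu}. Since $\ce$ is a norm-one idempotent onto the $\F_{\th,u}$-fixed points of $\ga_{\th,u}$, hence onto $C^*(e)$, and $F$ is a conditional expectation, $E_{\F_{\th,u}}=\ce\circ F$ is a conditional expectation of $\ga\rtimes_\a\bz$ onto $C^*(e)=(\ga\rtimes_\a\bz)^{\F_{\th,u}}$ (and the norm-Ces\`aro convergence of the series defining $E_{\F_{\th,u}}$ on all of $\ga\rtimes_\a\bz$ is subsumed, $\ce$ and $F$ being everywhere defined); finally $E_{\F_{\th,u}}\circ\F_{\th,u}=\ce\circ F\circ\F_{\th,u}=\ce\circ\F_{\th,u}\circ F=\ce\circ F=E_{\F_{\th,u}}$, using $\r_z\circ\F_{\th,u}=\F_{\th,u}\circ\r_z$ and that $\ce$ annihilates $\F_{\th,u}$.

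The step I expect to be the real obstacle is the existence of the Ces\`aro limit $\ce$ on $\ga_{\th,u}$ — the only genuinely non-formal input in the argument, which is exactly where unique ergodicity of $(\ga,\th)$ is used through the slice conjugacy above. Put differently, if one wishes to obtain the conditional-expectation statement under topological ergodicity alone, the subtle point becomes the identity $\om_o\circ\b=\om_o$, which is what makes $\sum_k e^k\delta_k\mapsto\sum_k\om_o(\delta_k)e^k$ both $C^*(e)$-bimodular (so that Tomiyama's theorem applies) and $\F_{\th,u}$-invariant; it holds because $\b$ commutes with $\th$ (a consequence of $\F_{\th,u}$ being an automorphism), whence $\om_o\circ\b$ is again $\th$-invariant, so unique ergodicity of $(\ga,\th)$ forces $\om_o\circ\b=\om_o$. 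A second, purely bookkeeping, point is the handling of the unitaries $w_l$ and of the cocycle relation $e_le_m=c_{l,m}e_{l+m}$ among the fixed unitaries $e_l:=V^lw_l$: it is a $\bz$-coboundary, so after rescaling the $e_l$ become the powers of the single unitary $e=e_\ell$, which is what one needs to identify $\{e_l\}_{l\in G_{\th,u}}$ as an $\om$-orthonormal system, $C^*(e)$ with $(\ga\rtimes_\a\bz)^{\F_{\th,u}}$, and $\om|_{C^*(e)}$ with the normalised Haar measure.
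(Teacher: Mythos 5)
Your argument for the norm-convergence statement is essentially the paper's: you prove the same slice identity $\F_{\th,u}^k(V^mb)=V^mw_m\th^k(w_m^*b)$ (which the paper records as $V^{-l}\F_{\th,u}^k(V^l)=w_l\th^k(w_{-l})$, with the same tacit normalisation $w_{-l}=w_l^*$, needed also for \eqref{geu} to be idempotent), then invoke unique ergodicity of $(\ga,\th)$ on each slice and extend by density and contractivity of the Ces\`aro averages, a step the paper leaves implicit. For the first assertion your route is genuinely different: the paper merely checks $E_{\F_{\th,u}}\circ\F_{\th,u}=E_{\F_{\th,u}}$ by a direct computation on the generators $V^ma$ and leaves the verification that \eqref{geu} is a well-defined conditional expectation to the reader, whereas you identify $\ga_{\th,u}$ with the crossed product $\ga\rtimes_\b\bz$, $\b=\ad_e\lceil_\ga$ with $e=V^{l_{\th,u}}w_{l_{\th,u}}$, recognise $(\ga\rtimes_\a\bz)^{\F_{\th,u}}=C^*(e)$, and factorise $E_{\F_{\th,u}}=\ce\circ F$ with $F$ the finite gauge average onto $\ga_{\th,u}$ and $\ce$ the Ces\`aro-limit projection, so that positivity, the bimodule property (via Tomiyama) and the invariance come out structurally. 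This buys exactly what the paper omits (well-definedness of \eqref{geu} and the conditional-expectation properties), but note the trade-off: the paper's generator computation establishes $E_{\F_{\th,u}}\circ\F_{\th,u}=E_{\F_{\th,u}}$ using only $\th$-invariance of $\om_o$ and topological ergodicity, while your factorisation routes the first assertion through $\ce$, hence through unique ergodicity of $(\ga,\th)$ (or through $\om_o\circ\b=\om_o$, which you correctly isolate but again derive from unique ergodicity); under the standing hypotheses of the section this is harmless, but it is a genuine difference in the hypotheses consumed. Two pieces of bookkeeping you should add: the degenerate case $G_{\th,u}=\{0\}$, where $e$ is undefined and one simply has $F=E$ and $E_{\F_{\th,u}}=\om_o(E(\,{\bf\cdot}\,))I$, and a one-line justification that the Ces\`aro convergence of \eqref{geu} on all of $\ga\rtimes_\a\bz$ is really "subsumed", namely that $E(V^{-l}x)=E(V^{-l}F(x))$ for $l\in G_{\th,u}$, so the series for $x$ coincides termwise with the series for $F(x)\in\ga_{\th,u}$.
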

\begin{proof}
We leave to the reader the task to check that $E_{\F_{\th,u}}$ is a well-defined conditional expectation onto $(\ga\rtimes_\a\bz)^{\F_{\th,u}}$.

Concerning the invariance, by a standard approximation argument, it is enough to consider $x=V^ma$ for a fixed $m\in\bz$ and $a\in\ga$. For such an $x$ we have
$$
E_{\F_{\th,u}}(x)=\sum_{l\in G_{\th,u}}\d_{l,m}\om_o(w_{-l}a)V^lw_l\,.
$$

Taking into account that, for general skew-products, $V^{-m}\F^k_{\th,u}(V^m)\in\ga$ and, by \eqref{coo1}, for $l\in G_{\th,u}$, $V^{-l}\F^k_{\th,u}(V^l)=w_l\th^k(w_{-l})$,
we get
\begin{align*}
E_{\F_{\th,u}}(\F_{\th,u}(x))=&\sum_{l\in G_{\th,u}}\om_o\big(w_{-l}E(V^{-l}\F_{\th,u}(V^m)\th(a))\big)V^lw_l\\
=&\sum_{l\in G_{\th,u}}\om_o\big(w_{-l}E(V^{m-l}V^{-m}\F_{\th,u}(V^m)\th(a))\big)V^lw_l\\
=&\sum_{l\in G_{\th,u}}\d_{l,m}\om_o\big(w_{-l}V^{-l}\F_{\th,u}(V^l)\th(a))\big)V^lw_l\\
=&\sum_{l\in G_{\th,u}}\d_{l,m}\om_o\big(w_{-l}w_l\th(w_{-l})\th(a))\big)V^lw_l\\
=&\sum_{l\in G_{\th,u}}\d_{l,m}\om_o(w_{-l}a)V^lw_l\\
=&E_{\F_{\th,u}}(x)\,.
\end{align*}

Concerning the invariance, it is again enough to verify the claimed equality on the generators $x=V^{m}a$, for some $m\in G_{\th,u}$, and compute
\begin{align*}
&\frac1{n}\sum_{k=0}^{n-1}\F_{\th,u}^k(x)=\frac1{n}\sum_{k=0}^{n-1}\F^k_{\th,u}(V^{m})\th^k(a)\\
=&V^{m}w_{m}\Big(\frac1{n}\sum_{k=0}^{n-1}\th^k(w_{-m}a)\Big)
\rightarrow\om_o(w_{-m}a)V^{m}w_{m}\\
=&\sum_{l\in G_{\th,u}}\d_{l,m}\om_o(w_{-l}a)V^lw_l
=E_{\F_{\th,u}}(x)\,.
\end{align*}
\end{proof}
\begin{rem}
\label{mov}
From the above result, we immediately see that,
if $(\ga,\th)$ is uniquely ergodic and $G_{\th,u}=\bz$, which happens when the cohomological equation has a continuous non-trivial solution for $n=1$, then the $C^*$-dynamical system $(\ga\rtimes_\a\bz,\F_{\th,u})$ is uniquely ergodic w.r.t. the fixed-point subalgebra because in this case $\mathfrak{A}_{\theta, u}$ coincides the whole crossed product 
$\ga\rtimes_\a\bz$.
\end{rem}

We can now move on to prove the necessary condition for our dynamical system $(\ga\rtimes_\a\bz,\F_{\th,u}\big)$
to be uniquely ergodic w.r.t. the fixed-point subalgebra. To do so, we need a preliminary result worthy of a statement
to itself, which is essentially a version of Proposition \ref{efwpa} at the Hilbert space level.
\begin{prop}
\label{ueif}
Let $(\ga,\th,\om_o)$ be a uniquely ergodic $C^*$-dynamical system. Then for $\xi\in[\pi_\om(\ga_{\th,u})\xi_{\om_o}]$,
\begin{equation}
\label{pzfp}
\lim_{n\to+\infty}\frac1{n}\sum_{k=0}^{n-1}V_{\om,\F_{\th,u}}^k\xi=\sum_{l\in\bz}\big\langle\xi,\pi_\om(V^l\widetilde{w}_l)\xi_{\om_o}\big\rangle\pi_\om(V^l\widetilde{w}_l)\xi_{\om_o}\,.
\end{equation}

If in addition $\big(\ga\rtimes_\a\bz,\F_{\th,u}\big)$ is uniquely ergodic w.r.t. the fixed-point subalgebra, then \eqref{pzfp} holds true for each 
$\xi\in\ch_\om$.
\end{prop}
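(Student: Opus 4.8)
The plan is to transfer the norm‑level statements of Proposition~\ref{efwpa} (and, for the second assertion, of Definition~\ref{abdy}(iv)) to the GNS Hilbert space through the covariance relation $V_{\om,\F_{\th,u}}^k\pi_\om(x)\xi_{\om_o}=\pi_\om(\F_{\th,u}^k(x))\xi_{\om_o}$, and then to extend from a dense subspace using uniform boundedness of the Cesàro means. First I would record the shape of the right-hand side of \eqref{pzfp}. Put $e_l:=\pi_\om(V^lw_l)\xi_{\om_o}$ for $l\in G_{\th,u}$, so that $e_l=\pi_\om(V^l\widetilde w_l)\xi_{\om_o}$ and $e_l=0$ for $l\notin G_{\th,u}$. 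Using $V^lw_l\in\cu(\ga\rtimes_\a\bz)$, the identity $(V^{l'}w_{l'})^*V^lw_l=w_{l'}^*\a^{l-l'}(w_l)V^{l-l'}$, and \eqref{tga}, one checks at once that $\{e_l\}_{l\in G_{\th,u}}$ is an orthonormal family, and by Proposition~\ref{unco} together with Proposition~\ref{str}(ii) its closed linear span is exactly $\ch_{\rm fix}:=[\pi_\om((\ga\rtimes_\a\bz)^{\F_{\th,u}})\xi_{\om_o}]$. Hence the right-hand side of \eqref{pzfp} is $Q\xi$, where $Q$ denotes the orthogonal projection of $\ch_\om$ onto $\ch_{\rm fix}$. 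By the mean ergodic theorem $\frac1n\sum_{k=0}^{n-1}V_{\om,\F_{\th,u}}^k\to E_1^{V_{\om,\F_{\th,u}}}$ strongly, and since each $e_l$ is fixed by $V_{\om,\F_{\th,u}}$ we have $Q\le E_1^{V_{\om,\F_{\th,u}}}$; so both assertions reduce to showing that $E_1^{V_{\om,\F_{\th,u}}}$ agrees with $Q$ on the relevant subspace.

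For the first assertion I would take $x\in\ga_{\th,u}$ and apply $\pi_\om(\,\cdot\,)\xi_{\om_o}$ to the norm convergence $\frac1n\sum_{k=0}^{n-1}\F_{\th,u}^k(x)\to E_{\F_{\th,u}}(x)$ of Proposition~\ref{efwpa}, which gives $E_1^{V_{\om,\F_{\th,u}}}\pi_\om(x)\xi_{\om_o}=\pi_\om(E_{\F_{\th,u}}(x))\xi_{\om_o}$. It then remains to identify the latter vector with $Q\,\pi_\om(x)\xi_{\om_o}$: by the defining formula \eqref{geu} it equals $\sum_{l\in G_{\th,u}}\om_o(w_{-l}E(V^{-l}x))e_l$, while, using that $E$ is an $\ga$-bimodule map, that $\om=\om_o\circ E$, and that $w_{-l}=w_l^*$ (the $k=0$ instance of the identity $V^{-l}\F_{\th,u}^k(V^l)=w_l\th^k(w_{-l})$ used in the proof of Proposition~\ref{efwpa}), one has $\om_o(w_{-l}E(V^{-l}x))=\om(w_l^*V^{-l}x)=\langle\pi_\om(x)\xi_{\om_o},e_l\rangle$, so the two expressions coincide. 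Since $\{\pi_\om(x)\xi_{\om_o}\mid x\in\ga_{\th,u}\}$ is dense in $[\pi_\om(\ga_{\th,u})\xi_{\om_o}]$ and $E_1^{V_{\om,\F_{\th,u}}}$, $Q$ are bounded, \eqref{pzfp} follows on all of $[\pi_\om(\ga_{\th,u})\xi_{\om_o}]$.

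For the second assertion, unique ergodicity with respect to the fixed-point subalgebra yields, by Definition~\ref{abdy}(iv) and the remark following it, a conditional expectation $\ce$ onto $(\ga\rtimes_\a\bz)^{\F_{\th,u}}$ with $\frac1n\sum_{k=0}^{n-1}\F_{\th,u}^k(x)\to\ce(x)$ in norm for every $x\in\ga\rtimes_\a\bz$. Transferring exactly as above gives $E_1^{V_{\om,\F_{\th,u}}}\pi_\om(x)\xi_{\om_o}=\pi_\om(\ce(x))\xi_{\om_o}\in\ch_{\rm fix}$ for all $x$; by density of $\pi_\om(\ga\rtimes_\a\bz)\xi_{\om_o}$ in $\ch_\om$ and closedness of $\ch_{\rm fix}$ we get $E_1^{V_{\om,\F_{\th,u}}}\ch_\om\subseteq\ch_{\rm fix}$, i.e. $E_1^{V_{\om,\F_{\th,u}}}\le Q$, and combined with $Q\le E_1^{V_{\om,\F_{\th,u}}}$ this forces $E_1^{V_{\om,\F_{\th,u}}}=Q$; hence $\frac1n\sum_{k=0}^{n-1}V_{\om,\F_{\th,u}}^k\xi\to Q\xi$ for every $\xi\in\ch_\om$, which is \eqref{pzfp}.

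The computations are essentially bookkeeping once Propositions~\ref{efwpa}, \ref{unco} and \ref{str} are in hand; the one step deserving care is matching the coefficients $\om_o(w_{-l}E(V^{-l}x))$ defining $E_{\F_{\th,u}}$ with the Fourier-type coefficients $\langle\pi_\om(x)\xi_{\om_o},e_l\rangle$, which relies on $w_{-l}=w_l^*$, on the bimodule property of $E$, and on the orthonormality of the $e_l$. Conceptually, the only genuinely nontrivial input is the inequality $E_1^{V_{\om,\F_{\th,u}}}\le Q$ in the second part: without the f.p.-unique-ergodicity hypothesis the fixed-point space $E_1^{V_{\om,\F_{\th,u}}}\ch_\om$ may be strictly larger than $\ch_{\rm fix}$, the extra fixed vectors corresponding precisely to the ``measurable non-continuous'' solutions of the cohomological equations.
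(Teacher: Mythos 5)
Your argument is correct, but it is organized differently from the paper's proof. The paper decomposes $\ch_\om=\bigoplus_m\ch_m$, observes that each $P_{\ch_m}$ commutes with $V_{\om,\F_{\th,u}}$, and then proves the convergence componentwise by an explicit $\frac{\eps}{3}$ estimate: for $m\in G_{\th,u}$ it uses the cocycle identity $\F^k_{\th,u}(V^m a)=V^m w_m\,\th^k(w_{-m}a)$ together with unique ergodicity of $(\ga,\th,\om_o)$ (essentially re-running the computation of Proposition \ref{efwpa} at the vector level), and for $m\notin G_{\th,u}$ it uses unique ergodicity w.r.t.\ the fixed-point subalgebra to show the averages tend to $0$ on $\ch_m$. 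You instead invoke von Neumann's mean ergodic theorem to identify the limit of the Ces\`aro averages with $E_1^{V_{\om,\F_{\th,u}}}$, identify the right-hand side of \eqref{pzfp} with the projection $Q$ onto the closed span of the orthonormal fixed vectors $e_l=\pi_\om(V^l w_l)\xi_{\om_o}$, and reduce both assertions to $E_1^{V_{\om,\F_{\th,u}}}=Q$ on the relevant subspace, using Proposition \ref{efwpa} as a black box (plus the coefficient matching $\om_o(w_l^*E(V^{-l}x))=\langle\pi_\om(x)\xi_{\om_o},e_l\rangle$) for the first part and a range-inclusion/projection-ordering argument via the invariant conditional expectation for the second. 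Your packaging is cleaner in that it reuses \eqref{geu} instead of re-deriving the estimate, and it makes transparent that, absent f.p.-unique ergodicity, the defect $E_1^{V_{\om,\F_{\th,u}}}-Q$ is exactly accounted for by the measurable non-continuous solutions; the paper's version avoids the coefficient identification and stays entirely at the level of explicit norm estimates. One small point of care, inherited from the paper rather than introduced by you: taken literally, $W_{-l}$ is spanned by $\a^l(w_l^*)$, not by $w_l^*$, so the intrinsic form of the identity you quote is $V^{-l}\F^k_{\th,u}(V^l)=w_l\,\th^k(w_l^*)$ and the coefficient in \eqref{geu} should be read as $\om_o\big(w_l^*E(V^{-l}x)\big)$; with that reading (which is also what makes \eqref{geu} a conditional expectation) your matching of coefficients is exactly right, so this is a normalization convention, not a gap.
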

\begin{proof}
We start by noticing that the $P_{\ch_m}$ reduces $V_{\om,\F_{\th,u}}$:
$$
\big[V_{\om,\F_{\th,u}},P_{\ch_m}\big]=0\,,\quad m\in\bz\,.
$$
Therefore, by a standard approximation argument, we need only 
consider vectors of the form $\xi=\pi_{\om}(V^m)\eta\in\ch_m=\pi_{\om}(V^m)\ch_o$  for a fixed $m\in\bz$, where $\eta$ sits in $\mathcal{H}_0$. 

In order to prove the first statement, the integer $m$ can be supposed to lie in 
$G_{\theta, u}$. To this aim, for any 
$\eps>0$, we can choose $a_\eps\in\ga$ such that if we set
$$\eta_\varepsilon:=\pi_\omega(a_\varepsilon)\xi_{\omega_0}\quad \xi_\eps:=\pi_\omega(V^m)\eta_\eps$$
we have
$\|\xi-\xi_\eps\|=\|\eta-\eta_\eps\|<\eps$.
Thanks to an $\frac{\varepsilon}{3}$-argument, we then see that
\begin{align*}
&\Big\|\frac1{n}\sum_{k=0}^{n-1}V_{\om,\F_{\th,u}}^k\xi
-\sum_{l\in\bz}\big\langle\xi,\pi_\om(V^l\widetilde{w}_l)\xi_{\om_o}\big\rangle\pi_\om(V^l\widetilde{w}_l)\xi_{\om_o}\Big\|\\
=&\Big\|\frac1{n}\sum_{k=0}^{n-1}\pi_{\om}(V^m w_m\th^k(w_{-m}))V_{\om_o,\th}^k\eta
-\langle\eta,\pi_{\om_0}(w_m)\xi_{\om_o}\rangle\pi_{\om_o}(V^m w_m)\xi_{\om_o}\Big\|\\
<&2\eps+\Big\|\pi_{\om}(V^mw_m)\pi_{\om_o}\Big(\frac1{n}\sum_{k=0}^{n-1}\th^k(w_{-m}a_\eps)-\om_o(w_{-m}a_\eps) I\Big)\xi_{\om_o}\Big\|\,,
\end{align*}
which goes to $0$, as $\eps$ is arbitrary.

In order to prove the second statement, we also need to take into account integers
$m$ that do not sit in $G_{\theta, u}$, and for such $m$'s we have
$$
\lim_{n\to+\infty}\frac1{n}\sum_{k=0}^{n-1}V_{\om,\F_{\th,u}}^k\xi=0\,,\quad \xi\in[\pi_\om(\ga_{\th,u})\xi_{\om_o}]^\perp\,.
$$
Indeed, for $a\in\ga$, by unique ergodicity w.r.t. the fixed-point subalgebra we have
$$
\frac1{n}\sum_{k=0}^{n-1}\F^k_{\th,u}(V^{m}a)\rightarrow0
$$
in the $C^*$-norm of the crossed product algebra, as $n\to+\infty$. But then the same computation as before leads to
\begin{align*}
\Big\|\frac1{n}\sum_{k=0}^{n-1}V_{\om,\F_{\th,u}}^k\xi\Big\|
\leq&\Big\|\frac1{n}\sum_{k=0}^{n-1}V_{\om,\F_{\th,u}}^k(\xi-\xi_\eps)\Big\|
+\Big\|\frac1{n}\sum_{k=0}^{n-1}\F^k_{\th,u}(V^{m}a)\Big\|\\
<&\eps+\Big\|\frac1{n}\sum_{k=0}^{n-1}\F^k_{\th,u}(V^{m}a)\Big\|\rightarrow\eps\,,
\end{align*}
and the conclusion is thus arrived at.
\end{proof}
\begin{prop}
\label{necessarycond}
Let $(\ga\rtimes_\a\bz,\F_{\th,u})$ be a skew-product $C^*$-dynamical system based on the uniquely ergodic $C^*$-dynamical system $(\ga,\th,\om_o)$.  

If $(\ga\rtimes_\a\bz,\F_{\th,u})$ is uniquely ergodic w.r.t. the fixed-point subalgebra, then for each $n\in\bz$, \eqref{coml2} has no nontrivial solutions 
$\eta\in\ch_{\om_o}\smallsetminus\{\pi_{\om_o}(a)\xi_{\om_o}\mid a\in \ga\,\,\text{satisfying \eqref{comc*}}\}$.
\end{prop}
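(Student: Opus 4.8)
The plan is to deduce the statement directly from Proposition \ref{ueif}, whose second part asserts that \emph{precisely when} $(\ga\rtimes_\a\bz,\F_{\th,u})$ is uniquely ergodic with respect to the fixed-point subalgebra, the mean-ergodic identity \eqref{pzfp} holds for \emph{every} vector of $\ch_\om$. So fix $n\in\bz$, let $\eta\in\ch_{\om_o}\smallsetminus\{0\}$ be a solution of \eqref{coml2}, and aim to show that $\eta=\pi_{\om_o}(a)\xi_{\om_o}$ for some $a\in\ga$ satisfying \eqref{comc*}; i.e.\ that $\eta$ is ``continuous''. Identifying $\ch_{\om_o}$ with the cyclic subspace $\ch_o\subset\ch_\om$ as in Remark \ref{czrffi}, the object to feed into \eqref{pzfp} is the lifted vector $\zeta:=\pi_\om(V^n)\eta\in\ch_n$.

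The first step is to observe that $\zeta$ is fixed by $V_{\om,\F_{\th,u}}$. Using the orthogonal decomposition $\ch_\om=\bigoplus_{m\in\bz}\ch_m$ of Lemma \ref{orto} together with the formula for $V_{\om,\F_{\th,u}}$ on this decomposition recorded in the proof of Proposition \ref{toerph}, and noting that $\zeta$ has only its $\ch_n$-component nonzero and equal (under the identification) to $\eta$, one reads off
$$
V_{\om,\F_{\th,u}}\zeta-\zeta=\pi_\om(V^n)\big(\pi_{\om_o}(\a^{-n}(u_n))V_{\om_o,\th}-\idd_{\ch_{\om_o}}\big)\eta\,,
$$
which vanishes exactly because $\eta$ solves \eqref{coml2} at level $n$ (for $n=0$ one gets $V_{\om_o,\th}\eta-\eta$, again zero). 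Hence the Ces\`aro averages $\frac1N\sum_{k=0}^{N-1}V_{\om,\F_{\th,u}}^k\zeta$ are all equal to $\zeta$, and their limit is $\zeta$.

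The second step is to apply \eqref{pzfp} to $\xi=\zeta$. Since $(\ga,\th,\om_o)$ is uniquely ergodic, $(\ga,\th)$ is topologically ergodic, so the subgroup $G_{\th,u}$, the generating unitaries $w_l$ of the $W_l$ and the symbols $\widetilde w_l$ are all defined. The right-hand side of \eqref{pzfp} is a sum over $l\in\bz$ of multiples of the vectors $\pi_\om(V^l\widetilde w_l)\xi_{\om_o}$, each lying in $\ch_l$ and vanishing unless $l\in G_{\th,u}$. Applying the orthogonal projection $P_{\ch_n}$ onto $\ch_n$ to both sides (legitimate since the sum converges in norm) kills every term but $l=n$, leaving
$$
\zeta=\big\langle\zeta,\pi_\om(V^n\widetilde w_n)\xi_{\om_o}\big\rangle\,\pi_\om(V^n\widetilde w_n)\xi_{\om_o}\,.
$$
As $\zeta\neq0$, the right-hand side is nonzero, hence $\widetilde w_n\neq0$, i.e.\ $n\in G_{\th,u}$ and $\widetilde w_n=w_n$, a unitary of $\ga$ solving \eqref{comc*} by Proposition \ref{unco}; moreover $\zeta=c\,\pi_\om(V^nw_n)\xi_{\om_o}$ with $c:=\langle\zeta,\pi_\om(V^nw_n)\xi_{\om_o}\rangle\neq0$. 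Applying $\pi_\om(V^n)^*$ and returning to $\ch_o\cong\ch_{\om_o}$ (Remark \ref{czrffi}) gives $\eta=c\,\pi_{\om_o}(w_n)\xi_{\om_o}=\pi_{\om_o}(cw_n)\xi_{\om_o}$; since \eqref{comc*} is linear, $a:=cw_n\in\ga$ solves it, so $\eta$ is continuous, as required (the case $n=0$ being included, with $w_0=I$).

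I do not expect a genuine obstacle: the full analytic weight of the argument has already been discharged in Proposition \ref{ueif}, and what remains is an orthogonality-and-bookkeeping computation. The only point that needs care is the first step, namely reading off the $\ch_n$-component of $V_{\om,\F_{\th,u}}\zeta-\zeta$ correctly from the formula in the proof of Proposition \ref{toerph}, and keeping the canonical identifications $\ch_o=\ch_{\om_o}$, $\xi_\om=\xi_{\om_o}$ and ``$\pi_\om$ on $\ch_o$ equals $\pi_{\om_o}$'' straight throughout.
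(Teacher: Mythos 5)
Your proposal is correct and follows essentially the same route as the paper: lift the solution $\eta$ of \eqref{coml2} to the $V_{\om,\F_{\th,u}}$-invariant vector $\pi_\om(V^n)\eta\in\ch_n$, apply the second statement of Proposition \ref{ueif} (formula \eqref{pzfp} for all of $\ch_\om$), and use the orthogonal decomposition of Lemma \ref{orto} to conclude that $\eta$ is a multiple of $\pi_{\om_o}(\widetilde w_n)\xi_{\om_o}$, hence either zero or a \lq\lq continuous'' solution. The only cosmetic difference is that you argue contrapositively (every nonzero solution is continuous) and make the projection onto $\ch_n$ explicit, while the paper reads off the same single surviving term directly via the Kronecker delta.
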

\begin{proof}
We suppose that $(\ga\rtimes_\a\bz,\F_{\th,u})$ is uniquely ergodic w.r.t. the fixed-point subalgebra. 

If for a fixed $m$, $\eta\in\ch_{\om_o}\smallsetminus\{\pi_{\om_o}(a)\xi_{\om_o}\mid a\in \ga\,\,\text{satisfying \eqref{comc*}}\}$ is a solution of the cohomological
equation, the vector $\xi=\pi_\om(V^m)\eta\in \mathcal{H}_\omega$  is invariant under $V_{\om,\F_{\th,u}}$. 

By applying Proposition \ref{pzfp}, we then find
\begin{align*}
\xi=V_{\om,\F_{\th,u}}\xi=&\frac1{n}\sum_{k=0}^{n-1}V_{\om,\F_{\th,u}}^k\xi
\rightarrow\sum_{n\in\bz}\big\langle\xi,\pi_\om(V^n\widetilde{w}_n)\xi_{\om_o}\big\rangle\pi_\om(V^n\widetilde{w}_n)\\
=&\sum_{n\in\bz}\d_{m,n}\langle\eta,\pi_{\om_o}(\widetilde{w}_n)\xi_{\om_o}\rangle\pi_\om(V^n\widetilde{w}_n)\xi_{\om_o}\\
=&\langle\eta,\pi_{\om_o}(\widetilde{w}_m)\xi_{\om_o}\rangle\big(\pi_\om(V^m)\pi_{\om_o}(\widetilde{w}_m)\xi_{\om_o}\big)\,.
\end{align*}

We deduce that $\xi$ is a multiple of $\pi_\om(V^m)\pi_{\om_o}(\widetilde{w}_m)\xi_{\om_o}$, which is actually $0$ if $m\notin G_{\th,u}$.

Therefore, for each fixed $n\in\bz$, \eqref{coml2} has no non-trivial solution 
$\eta\in\ch_{\om_o}\smallsetminus\{\pi_{\om_o}(a)\xi_{\om_o}\mid a\in \ga\,\,\text{satisfying \eqref{comc*}}\}$.
\end{proof}
We would expect the converse to Proposition \ref{necessarycond} to hold true as well. Despite our best efforts, though, we have not been able to prove it in full generality. However, we next show this is actually  the case for general classical Anzai skew-products of the type extensively analised by Furstenberg in \cite{Fu}, which we recalled in the introduction.

We indeed start with a compact Hausdorff space $X_o$, a homeomorphism
$\theta\in\textrm{Homeo}(X_o)$, and a $\theta$-invariant probability measure  $\mu_o$ on
$X_o$, that is the unique regular Borel probability measure $\mu_o$ such that
$\mu_o(\theta^{-1}(B))=\mu_o(B)$ for every Borel subset $B\subset X_o$, or equivalently by the Riesz-Markov Theorem,
$$
\int_{X_o}(h\circ\theta)\textrm{d}\mu=\int_{X_o} h\textrm{d}\mu_o\,,\quad h\in C(X_o)\,.
$$

As recalled in the introduction, corresponding to a given continuous function $f\in C(X_o;\mathbb{T})$, we  consider the Anzai skew-product (called also a process on the torus in \cite{Fu})
$\Phi_{\theta, f}\in\textrm{Homeo}(X_o\times\mathbb{T})$ as
$$
\Phi_{\theta, f}(x, w):=(\theta(x), f(x)w)\,,\quad (x, w)\in X_o\times\mathbb{T}\,.
$$ 

Many ergodic properties of the corresponding skew-product dynamical system
can  be described in terms of the cohomological equations, one for each $n\in\mathbb{Z}$, which here take the much simpler forms
\begin{equation}
\label{sifo}
g(\theta(x))f(x)^n=g(x)\,,\,\,\mu_o\,\text{-}\,\textrm{a.e.}\,,
\end{equation}
in the unknown complex function $g\in L^\infty(X_o, \mu_o)$, together with the twin ones
\begin{equation}
\label{sifo1}
g(\theta(x))f(x)^n=g(x)\,,
\end{equation}
where now the unknown is a function $g\in C(X_o)$.

In this simple situation, we consider the representation of $h\in C(X_o)$ by the multiplication operator $\pi_{\m_o}(h)$ acting on $L^2(X_o, \mu_o)$ as
$$
(\pi_{\m_o}(h)\xi)(x):=h(x)\xi(x)\,,\quad \xi\in L^2(X_o, \mu_o)\,,
$$
and $\pi_{\m_o}(h)\xi_{\m_o}(x)=h(x)1\in L^2(X_o,\m_o)$, as $\xi_{\m_o}(x)=1$ the constant function on $X_o$.
Therefore, solutions of \eqref{sifo} of the form $g=\pi_{\m_o}(G)$, with $G$ satisfying \eqref{sifo1}, are exactly the \lq\lq continuous solutions" according to Definition \ref{dcmn}. The reader is referred to Example \ref{ex0} for a simple, but illustrative situation.

The following result proves that also the converse of Proposition \ref{necessarycond} holds true for processes on the torus.\footnote{In the remaining part of the present section, as well as in some examples in Section \ref{ses}, for the skew-product we are using the notation borrowed from  \cite{DFGR, Fu}. Namely, $\th$ corresponds to the automorphism of $C(\bt)$ generated by the rotation of the angle $\th$, and $f$ corresponds to 
$u\in C(\bt)$ given by the continuous function $f(z)$ in $C(X_o;\bt)$, or $f(U)$ in the noncommutative 2-torus generated by the noncommutative coordinates $U,V$. With a minor abuse of notation, we use the symbol $\F_{\th,f}$ to denote both the action
 on $X_o\times\bt$ and the corresponding automorphism of $C(X_o\times T)$, \it{i.e.} $\Phi_{\theta, f}(h)= h\circ \Phi_{\theta, f}$, $h\in C(X_o\times\bt)$.}
\begin{thm}
\label{classicalue}
Let $(X_o,\th,\m_o)$ be a uniquely ergodic classical dynamical system based on the compact Hausdorff space $X_o$, $\th\in{\rm Homeo}(X_o)$ and the unique invariant regular Borel probability measure 
$\m_o$ under $\th$, together with $f\in C(X_o;\bt)$.  

The associated process on the torus 
$(X_o\times\bt,\Phi_{\theta, f})$ is uniquely ergodic w.r.t. the fixed-point subalgebra if and only if each solution $g$ of \eqref{sifo} is of the form $g=\pi_{\m_o}(G)$, with $G\in C(X_o)$ satisfying \eqref{sifo1}.\footnote{Roughly speaking, a process on the torus is uniquely ergodic w.r.t. the fixed-point algebra if and only if each``measurable''({\it i.e.} of the form \eqref{sifo}) solution of the cohomological equations is indeed ``continuous'' ({\it i.e.} of the form \eqref{sifo1}).}
\end{thm}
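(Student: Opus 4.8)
The plan is to get the ``only if'' direction for free from Proposition~\ref{necessarycond} and to concentrate the work on the converse. For the forward implication I would note that any solution $g$ of \eqref{sifo} is automatically in $L^\infty(X_o,\m_o)$ and, up to a scalar, unimodular: indeed $|g\circ\theta|=|g|$, so $|g|$ is $\theta$-invariant, hence $\m_o$-a.e.\ constant by unique ergodicity. Thus $\eta:=\pi_{\m_o}(g)\xi_{\m_o}$ solves \eqref{coml2}, and Proposition~\ref{necessarycond} forces $\eta=\pi_{\m_o}(G)\xi_{\m_o}$ with $G\in C(X_o)$ satisfying \eqref{sifo1}; that is, $g$ is ``continuous''.

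For the converse I would assume that every solution of \eqref{sifo} is of that continuous form and verify condition (iv) of Definition~\ref{abdy} for $\big(C(X_o\times\bt),\Phi_{\theta,f}\big)$: that $\frac1{n}\sum_{k=0}^{n-1}\Phi_{\theta,f}^k(h)$ converges in sup-norm for every $h\in C(X_o\times\bt)$. As these averages are norm-contractions and the Laurent polynomials $\sum_{|j|\le N}g_j\,w^j$ (with $g_j\in C(X_o)$ and $w$ the coordinate of the second factor) are dense by Stone--Weierstrass, it suffices to treat $h=g\,w^n$ for a fixed $n\in\bz$ and $g\in C(X_o)$. Since $w$ commutes with $C(X_o)$ (the crossed product being over the trivial action), $\Phi_{\theta,f}^k(g\,w^n)=(T_n^k g)\,w^n$, where $T_n\colon C(X_o)\to C(X_o)$ is the isometry $T_n g:=(g\circ\theta)f^n$; so the whole matter reduces to the uniform convergence of $\frac1{n}\sum_{k=0}^{n-1}T_n^k g$.

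I would then distinguish whether $n$ lies in the subgroup $G_{\theta,f}$ of Proposition~\ref{str}. If $n\in G_{\theta,f}$, let $w_n\in C(X_o)$ be the generating unitary of $W_n$; writing $g=(g\,\overline{w_n})\,w_n$ and using $T_n w_n=w_n$ gives $T_n^k g=\big((g\,\overline{w_n})\circ\theta^k\big)w_n$, so by the unique ergodicity of $(X_o,\theta,\m_o)$ the averages converge uniformly to $\big(\int_{X_o} g\,\overline{w_n}\,\di\m_o\big)w_n$ --- this case does not use the hypothesis. If $n\notin G_{\theta,f}$, the limit should be $0$, and the crux is to prove that $(I-T_n)C(X_o)$ is sup-norm dense in $C(X_o)$. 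For this I would use Hahn--Banach: if a complex Radon measure $\nu$ on $X_o$ annihilates $(I-T_n)C(X_o)$, then $\int g\,\di\nu=\int(g\circ\theta)f^n\,\di\nu$ for all $g\in C(X_o)$, i.e.\ $\nu=\theta_*(f^n\nu)$. Passing to total variations (using $|f^n|\equiv1$ and that $\theta$ is a homeomorphism) shows $|\nu|$ is $\theta$-invariant, hence $|\nu|=\|\nu\|\,\m_o$ by unique ergodicity; writing $\nu=\rho\,\di\m_o$ with $|\rho|\equiv1$ $\m_o$-a.e., the identity $\nu=\theta_*(f^n\nu)$ becomes $\rho(\theta(x))=f(x)^n\rho(x)$ a.e., so $\overline{\rho}$ is a nontrivial measurable solution of \eqref{sifo} at level $n$. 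By hypothesis it would then be ``continuous'', forcing $n\in G_{\theta,f}$ --- a contradiction. Hence $\nu=0$, the density follows, and the averages tend uniformly to $0$.

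Putting the two cases together yields convergence of $\frac1{n}\sum_{k=0}^{n-1}\Phi_{\theta,f}^k(h)$ on all Laurent polynomials, hence on all of $C(X_o\times\bt)$, so $\big(C(X_o\times\bt),\Phi_{\theta,f}\big)$ is uniquely ergodic w.r.t.\ the fixed-point subalgebra by Definition~\ref{abdy}(iv); a comparison with \eqref{geu} identifies the limit as $E_{\Phi_{\theta,f}}$ of Proposition~\ref{efwpa} (so in fact the convergence there extends from $\ga_{\theta,f}$ to the whole algebra). The step I expect to be the genuine obstacle is precisely the case $n\notin G_{\theta,f}$: von Neumann's mean ergodic theorem gives $\frac1{n}\sum_{k<n}T_n^k g\to0$ only in $L^2(X_o,\m_o)$, and the passage to uniform convergence is exactly where the hypothesis ``every measurable solution is continuous'' is indispensable, through the exclusion of nonzero invariant Radon functionals carried out above.
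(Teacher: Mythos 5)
Your proof is correct, but the converse (the substantive direction) follows a genuinely different route from the paper's; the ``only if'' part is the same in both, being just Proposition \ref{necessarycond} read in $L^2(X_o,\m_o)$. The paper verifies condition (ii) of Definition \ref{abdy}: it fixes an arbitrary $\Phi_{\th,f}$-invariant regular Borel measure, reduces by Stone--Weierstrass to monomials $k(x)z^l$, treats $l\in n_o\bz$ by essentially the same Ces\`aro computation you perform, and for $l\notin n_o\bz$ argues by contradiction, first for \emph{ergodic} invariant measures via Birkhoff's pointwise ergodic theorem --- the a.e.\ limit of the weighted Birkhoff averages is shown, after some measurability bookkeeping and the identification of the $X_o$-marginal with $\m_o$, to be a nontrivial measurable solution of \eqref{sifo} --- and then passes to general invariant measures through a Krein--Milman approximation. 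You instead verify condition (iv): after reducing to the weighted Koopman operators $T_ng=(g\circ\th)f^n$ on $C(X_o)$, you settle the crucial case $n\notin G_{\th,u}$ by a mean-ergodic/duality argument, proving $(I-T_n)C(X_o)$ dense because an annihilating measure $\nu$ satisfies $\nu=\th_*(f^n\nu)$, so $|\nu|$ is $\th$-invariant, hence a multiple of $\m_o$ by unique ergodicity, and its Radon--Nikodym phase gives a unimodular measurable solution of \eqref{sifo}, contradicting the hypothesis. Both proofs hinge on the same mechanism --- an obstruction to unique ergodicity is converted into a measurable non-continuous solution of the cohomological equation --- but yours trades Birkhoff's theorem and Krein--Milman for Hahn--Banach and the polar decomposition of complex measures, and it yields a little more along the way: uniform convergence of the Ces\`aro means on all of $C(X_o\times\bt)$, i.e.\ the invariant conditional expectation of Proposition \ref{efwpa} extends from $\ga_{\th,u}$ to the whole algebra, whereas the paper reaches the conclusion through uniqueness of invariant state extensions. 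Note only that your case $n\in G_{\th,u}$ tacitly relies on Propositions \ref{str} and \ref{unco} for the unitary generator $w_n$, exactly as the paper does.
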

\begin{proof}
In Proposition \ref{necessarycond}, we have already proved for the general cases under consideration one side of the the assertion, and therefore we limit the analysis to the remaining direction.

As  was shown in Proposition \ref{str}, the fixed-point subalgebra is given by
\begin{equation}
\label{spf}
C(X_o\times\mathbb{T})^{\Phi_{\theta, f}}=\overline{\textrm{span}\{G_l(x,z)=g_{ln_o}(x)z^{l n_o}\mid l \in\mathbb{Z}\}}\,,
\end{equation}
where $n_o$ is the minimum of the strictly positive integers $n$ such that there exists a non-null
$g\in C(X_o)$ (indeed a multiple of a unitary function, uniquely determined up to a phase) with $g(\theta(x))f(x)^n=g(x)$ for all $x\in X_o$. 

We will prove unique ergodicity w.r.t. the fixed-point subalgebra by showing that condition  (ii) in Definition \ref{abdy}
is fulfilled.
In other terms, all we need to do is to check that any $\Phi_{\theta, f}$-invariant state on
$C(X_o\times\mathbb{T})$ is uniquely determined by its restriction to $C(X_o\times\mathbb{T})^{\Phi_{\theta, f}}$.

For such a purpose, fix the state $\varphi$ on $C(X_o\times\mathbb{T})$ invariant under $\Phi_{\theta, f}$, together with
 the corresponding regular Borel probability measure $\mu$ on the product space $X_o\times\mathbb{T}$, such that 
$\varphi(h)=\int_{X_o\times\mathbb{T}} h\textrm{d}\mu$ for every $h\in C(X_o\times\mathbb{T})$.

By virtue of the Stone-Weierstrass Theorem, the linear span of the set of functions 
of the form $k(x)z^l$, $k\in C(X_o)$ and $l\in\bz$, is a norm dense $*$-subalgebra of
$C(X_o\times \bt)$. In particular, the state $\varphi$ is uniquely determined by the values of the integrals
$\int_{X_o\times\mathbb{T}} k(x)z^l\textrm{d}\mu(x,z)$, where $k$ is any continuous function
on $X_o$, and $l$ any integer number. Therefore, it is enough to show to that,  for any
$k\in C(X_o)$ and $l\in\mathbb{Z}$, the value of the integral $\int_{X_o\times\mathbb{T}} k(x)z^l\textrm{d}\mu(x,z)$ actually depends only on the restriction of $\varphi$ to the fixed-point subalgebra. 

For such a purpose, there are two cases depending on whether $l$ is a multiple of $n_o$ or not.
If it is so, say $l=mn_o$, we can compute  $\int_{X_o\times\mathbb{T}} k(x)z^{mn_o}\textrm{d}\mu(x,z)$
in the following way. Let us denote by $V\in C(X_o\times\bt)$  the function $V(x, z):=z$, $(x, z)\in X_o\times\bt$.\footnote{The function $V(x, z)=z$ is nothing 
else than the outer unitary $V$ in Section \ref{cfprx} describing the (trivial) crossed product construction of $C(X_o\times \bt)$.}

Set $h(x, z):=k(x)z^{mn_o}$. Since $\Phi^j_{\theta, f}(g_{mn_o}V^{mn_o})=g_{mn_o}V^{mn_o}$ for any
$j\in\bn$, the Ces\`aro averages
of $h$ are given by
\begin{align*}
\bigg(\frac{1}{n}\sum_{j=0}^{n-1}h\circ&\Phi_{\theta, f}^j\bigg)(x, z)=\bigg(\frac{1}{n}\sum_{j=0}^{n-1}
\Phi^j_{\theta, f}(k V^{mn_o})\bigg)(x, z)\\
=&\bigg(\frac{1}{n}\sum_{j=0}^{n-1}\Phi^j_{\theta, f}(k \overline{g_{mn_o}}g_{mn_o}V^{mn_o})\bigg)(x, z)\\
=&\bigg(\frac{1}{n}\sum_{j=0}^{n-1} \Phi^j_{\theta, f}(k\overline{g_{mn_o}})\Phi^j_{\theta, f}(g_{mn_o}V^{mn_o})\bigg)(x, z)\\
=&\bigg(\frac{1}{n}\sum_{j=0}^{n-1}k(\theta^j(x))
\overline{g_{mn_o}(\theta^j(x))}\bigg)g_{mn_o}(x)z^{mn_o}\,.
\end{align*}
But $\theta$ is uniquely ergodic with $\mu_o$ being its unique invariant measure, which means that
the above Ces\`aro averages converge in norm to  $\langle k, g_{mn_o}\rangle_{L^2(X_o, \mu_0)}g_{mn_o}(x)z^{mn_o}$.
Since $\mu$ is $\Phi_{\theta, f}$-invariant, we have
\begin{align*}
\int_{X_o\times\bt} h{\rm d}\mu&=\int_{X_o\times\bt}\bigg(\frac{1}{n}\sum_{j=0}^{n-1}h\circ\Phi_{\theta, f}^j\bigg)\di\m\\
&=\lim_{n\rightarrow+\infty}\int_{X_o\times\bt}\bigg(\frac{1}{n}\sum_{j=0}^{n-1}h\circ\Phi_{\theta, f}^j\bigg)\di\m\\
&=\langle k, g_{mn_o}\rangle_{L^2(X_o, \mu_0)}\int_{X_o}g_{mn_o}(x)z^{mn_o}{\rm d}\mu_o.
\end{align*}

The conclusion is reached since the right-hand side of the equality above depends only on the
restriction of our state $\varphi$ to the fixed-point subalgebra as the function
$g_{mn_o}(x)z^{mn_o}$ is clearly
$\Phi_{\theta, f}$-invariant.

The second case is dealt with by showing that, if $l$ fails to be a multiple of
$n_o$, then $\int_{X_o\times\mathbb{T}}k(x)z^l\textrm{d}\mu(x,z)=0$ for any $k\in C(X_o)$.

We begin with the regular Borel measure $\m$ which is extreme under the action $\Phi_{\theta, f}$, that is $(X_o\times\mathbb{T}, \Phi_{\theta, f}, \mu)$ is  an ergodic dynamical system, and we shall argue by contradiction.

Let us suppose that there does exist a function $k\in C(X_o)$ such that 
$$
\int_{X_o\times\mathbb{T}}k(x)z^l\textrm{d}\mu(x,z)=\lambda\neq 0\,.
$$
We first recall that, for $(x,z)\in X_o\times\bt$, $V(x,z)=z$. Let us then define $h_{j,l}:=\Phi_{\theta, f}^j(V^l) V^{-l}\in C(X_o\times\bt)$, and note that $h_{j, l}$ actually depends only on the variable $x\in X_o$ as we have already shown. 

By Birkhoff's Ergodic Theorem ({\it e.g.} \cite{W}, Theorem 1.14), if we set $h(x, z):=k(x)z^l$, we see that the average 
$$
\frac{1}{n}\sum_{j=0}^{n-1}h(\Phi^j_{\theta, f}(x,z))\equiv\frac{1}{n}\sum_{j=0}^{n-1}k(\theta^j(x))h_{j,l}(x)z^l
$$ 
must converge to
$\lambda$, almost everywhere w.r.t. $\mu$. 

If 
$$
\ell_n(x):=\frac{1}{n}\sum_{j=0}^{n-1} k(\theta^j(x))h_{j,l}(x)\,,
$$
and $G_n(x,z)=\ell_n(x)z^l$, we have that the sequence $(G_n)_n$ converges to $\lambda$ almost everywhere w.r.t. $\mu$.

Let $C\subset X_o\times\mathbb{T}$ be the Borel set such that $\mu(C)=1$ and $(x, z)\in C$ implies that the numerical
sequence $\ell_n(x)z^l$ converges to $\lambda$. Note that the existence of such a set is provided again by
Birkhoff's Ergodic Theorem.

Let $A\subset X_o$ be the set of those $x$ such that
$\ell_n(x)$ converges to a non-zero complex number. We first note that $A$ is a Borel subset of $X_o$ since it can be rewritten as
the intersection $A= A_1\bigcap A_2\bigcap A_3$ with
\begin{align*}
&A_1:=\{x\in X_o\mid\limsup_ n {\rm Re}\, \ell_n(x)=\liminf_n {\rm Re}\,\ell_n(x)\}\,,\\
&A_2:=\{x\in X_o\mid\limsup_ n {\rm Im}\, \ell_n(x)=\liminf_n {\rm Im}\,\ell_n(x)\}\,,\\
&A_3:=\bigcup_{k=1}^\infty \{x\in X_o\mid\limsup_n |\ell_n(x)|>1/k\}\,,\\
\end{align*}
with $A_1$, $A_2$, and $A_3$ Borel subsets.
We then have $C\subset A\times\mathbb{T}$ because
$(x, z)\in C$ implies $\ell_n(x)$ converges to $\lambda z^{-l}$, whose absolute value is $|\lambda|$.
But then, $\mu(A\times\mathbb{T})\geq \mu (C)=1$, which means $\mu(A\times\mathbb{T})=1$.

Since  the marginal measure $\n(B):=\m(B\times\bt)$, $B\subset X_o$ measurable, is $\th$-invariant, and $(X_o, \theta, \mu_o)$ is uniquely ergodic,
we have $\mu(A\times\mathbb{T})=\mu_o(A)$. Therefore, the above equality now reads
$\mu_o(A)=1$, which means the sequence $(\ell_n)_n$ converges,
almost everywhere w.r.t. $\mu_o$, to a measurable function $\widetilde{g}$ on $X_o$, which by construction satisfies the cohomological
$\widetilde{g}(\theta(x))f(x)^l=\widetilde{g}(x)$, almost everywhere w.r.t $\mu_o$, and
$\widetilde{g}$ is certainly non-null. This leads to a contradiction and the assertion is proved for the ergodic regular Borel measures, or equivalently for the ergodic states on
$C(X_o\times\bt)$.

Pick now any regular Borel measure $\n$ on $C(X_o\times\bt)$, invariant under $\Phi_{\theta, f}$, and denote by $\f_\n$ the corresponding state. By the Krein-Milman Theorem, for 
fixed $F\in C(X_o\times\bt)$ with $F(x,z)=k(x)z^l$, $l$ failing to be a multiple of $n_o$, and $\eps>0$, there exists a finite set of ergodic regular Borel measures 
$\{\m_j\mid j=1,\dots,n\}$, and $\{\l_j\mid \l_j\geq0,\,\,\sum_{j=1}^n \l_j=1\}$, such that
$\big|\f_\n(F)-\sum_{j=1}^n \l_j\f_{\m_j}(F)\big|<\eps$.

We easily get by the previous part,
$$
|\f_\n(F)|\leq\big|\f_\n(F)-\sum_{j=1}^n \l_j\f_{\m_j}(F)\big|+\big|\sum_{j=1}^n \l_j\f_{\m_j}(F)\big|<\eps\,,
$$
and the assertion follows as $\eps$ is arbitrary.
\end{proof}
For the processes on the torus $(X_o\times\mathbb{T}, \Phi_{\theta, f})$ which are uniquely ergodic w.r.t. the fixed-point subalgebra, we can also provide a full description of all their invariant measures. The first thing to do is say what the fixed-point subalgebra
$C(X_o\times\mathbb{T})^{\Phi_{\theta, f}}$ given in \eqref{spf} looks like.
Up to isomorphism, there are actually only two possibilities, as shown below.
\begin{prop}
\label{fixpointchar}
Let $(X_o\times\mathbb{T}, \Phi_{\theta, f})$ be a process on the torus based on the topologically ergodic $C^*$-dynamical system $(X_o,\theta)$. Then
the fixed-point subalgebra $C(X_o\times\mathbb{T})^{\Phi_{\theta, f}}$
is $*$-isomorphic with either $\mathbb{C}$ or $C(\mathbb{T})$,
the first case occurring when $(X_o\times\mathbb{T}, \Phi_{\theta, f})$ is topologically ergodic.
\end{prop}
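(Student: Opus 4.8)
The plan is to build on the explicit description of the fixed-point subalgebra already obtained in Proposition \ref{str} and in \eqref{spf}: since $(X_o,\theta)$ is topologically ergodic one has $G_{\theta,f}=n_o\mathbb{Z}$ and
$$
C(X_o\times\mathbb{T})^{\Phi_{\theta, f}}=\overline{\textrm{span}\{G_l\mid l\in\mathbb{Z}\}}\,,\qquad G_l(x,z):=g_{ln_o}(x)z^{ln_o}\,,
$$
with the convention that $n_o=0$, i.e. $G_{\theta,f}=\{0\}$, is allowed. First I would dispose of the degenerate case $G_{\theta,f}=\{0\}$: then only $G_0$, a constant, survives, so the fixed-point subalgebra is $\mathbb{C}I\cong\mathbb{C}$. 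By definition this says precisely that $(X_o\times\mathbb{T},\Phi_{\theta, f})$ is topologically ergodic, and by Proposition \ref{toerp} applied to the topologically ergodic system $(C(X_o),\theta)$ this happens exactly when the cohomological equations \eqref{comc*} (that is, \eqref{sifo1}) have no nontrivial solution for $n\neq0$, i.e. when $G_{\theta,f}=\{0\}$. This settles the $\mathbb{C}$-alternative together with its characterisation.

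For the remaining case $n_o\geq1$, I would isolate the single unitary $w:=g_{n_o}(x)z^{n_o}$, which equals $G_1$ and hence lies in $C(X_o\times\mathbb{T})^{\Phi_{\theta, f}}$; it is an element of $\mathcal{U}(C(X_o\times\mathbb{T}))$ because $g_{n_o}$ can be taken to be a unitary function on $X_o$ (cf. Proposition \ref{unco}) and $z\mapsto z^{n_o}$ is unimodular. The algebraic heart of the matter is that $w^{l}$ is a scalar multiple of $G_l$ for every $l\in\mathbb{Z}$: indeed $g_{n_o}^{l}$ solves \eqref{sifo1} at level $ln_o$, and since $(X_o,\theta)$ is topologically ergodic, Proposition \ref{unco} forces $g_{n_o}^{l}=c_l\,g_{ln_o}$ for some unimodular constant $c_l$. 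Hence $\overline{\textrm{span}\{G_l\mid l\in\mathbb{Z}\}}=\overline{\textrm{span}\{w^{l}\mid l\in\mathbb{Z}\}}$, which is nothing but the unital $C^*$-subalgebra $C^*(w)\subseteq C(X_o\times\mathbb{T})$ generated by the single unitary $w$.

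Finally I would invoke the continuous functional calculus: $C^*(w)\cong C(\sigma(w))$, so it remains to identify $\sigma(w)$. Being a (normal) element of the commutative $C^*$-algebra $C(X_o\times\mathbb{T})$, $w$ has spectrum equal to the closure of its range $w(X_o\times\mathbb{T})$; but for each fixed $x\in X_o$ the map $z\mapsto g_{n_o}(x)z^{n_o}$ is a continuous surjection of $\mathbb{T}$ onto $g_{n_o}(x)\mathbb{T}=\mathbb{T}$, so $w$ is already onto $\mathbb{T}$ and $\sigma(w)=\mathbb{T}$. Therefore $C(X_o\times\mathbb{T})^{\Phi_{\theta, f}}\cong C(\mathbb{T})$ in this case, which also shows the two alternatives are mutually exclusive as $C(\mathbb{T})\not\cong\mathbb{C}$. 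The argument is essentially elementary; the only place where the hypotheses are genuinely used is the identification $g_{n_o}^{l}=c_l\,g_{ln_o}$, for which the topological ergodicity of $(X_o,\theta)$ — through Proposition \ref{unco} — is indispensable, since without it the powers of $w$ need not, a priori, exhaust the fixed-point subalgebra.
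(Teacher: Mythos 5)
Your proof is correct, and its second half takes a genuinely different route from the paper's. Both arguments start from the same place: Proposition \ref{str} and \eqref{spf} give the fixed-point algebra as the closed span of the $G_l(x,z)=g_{ln_o}(x)z^{ln_o}$, and Proposition \ref{unco} (via topological ergodicity of $(X_o,\theta)$) gives one-dimensionality of each solution space, which you use to get $g_{n_o}^{\,l}=c_l\,g_{ln_o}$ — this is essentially the same normalisation the paper achieves by choosing the $g_{n_ol}$ so that $g_{n_ol}g_{n_om}=g_{n_o(l+m)}$. Where you diverge is the identification with $C(\mathbb{T})$: the paper exhibits an $\ell_1$-type subalgebra isometrically isomorphic to the convolution algebra $\ell_1(\mathbb{Z})$ and appeals to the fact that this Banach $*$-algebra has a unique $C^*$-completion, namely $C(\mathbb{T})$; you instead observe that the fixed-point algebra is the singly generated unital $C^*$-algebra $C^*(w)$ with $w=G_1$ a unitary, and compute $\sigma(w)$ directly as the range of $w$, which is all of $\mathbb{T}$ since $z\mapsto g_{n_o}(x)z^{n_o}$ is onto for each $x$ (here $n_o\geq1$ and spectral permanence let you compute the spectrum in the ambient commutative algebra). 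Your route is more elementary and self-contained: it avoids the uniqueness-of-completion step (which, as stated, silently requires ruling out $C^*$-norms on $\ell_1(\mathbb{Z})$ supported on proper closed subsets of $\mathbb{T}$), and it produces the isomorphism explicitly, sending $w$ to the identity function on $\mathbb{T}$. The paper's route, on the other hand, displays the group structure of the spectrum of the fixed-point algebra more transparently, since it identifies it with the group $C^*$-algebra of $G_{\theta,f}\cong\mathbb{Z}$. The handling of the degenerate case and the mutual exclusivity of the two alternatives is fine in both.
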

\begin{proof}
The case $C(X_o\times\mathbb{T})^{\Phi_{\theta, f}}=\bc I$ is trivial and occurs when the cohomological equations \eqref{comc*} have no nontrivial solutions for $n\neq0$.

The remaining case occurs when the cohomological equations \eqref{comc*} have nontrivial solutions for all and only $n$ in the subgroup of $\bz$ of the form $\{n_ol\mid l\in\bz\}$. In this case, the solutions are multiple of a single unitary 
$g_{n_ol}$ ({\it cf.} Proposition \ref{unco}) satisfying $\overline{g_{n_ol}}=g_{-n_ol}$, and in addition one can choose the $g_{n_ol}$ such that  
$g_{n_ol}g_{n_om}=g_{n_o(l+m)}$, see Proposition \ref{str}. 

We point out that the continuous functions $G_l(x,z):=g_{n_ol}(x)z^{n_ol}$ on $X\times\bt$ are linearly independent and
their linear span provides a (nontrivial) $*$-subalgebra of $C(X_o\times\mathbb{T})^{\Phi_{\theta, f}}$. 

We now observe that 
$$
\mathcal{A}:=\Big\{\sum_{m\in\mathbb{Z}} c_lG_l\mid\sum_{m\in\mathbb{Z}} |c_l|<\infty\Big \}\,,
$$
endowed with the $\ell_1$-type norm
$$
\Big[\!\!\Big]\sum_{l\in\mathbb{Z}}c_lG_l\Big[\!\!\Big]:=\sum_{z\in\mathbb{Z}}|c_l|\,,
$$
is also contained in  $C(X_o\times \mathbb{T})^{\Phi_{\theta, f}}$ since any series of that type
converges uniformly to a continuous $\Phi_{\theta, f}$-invariant function.

Thanks to the equality $G_lG_m=G_{l+m}$,
$\mathcal{A}$ is seen at once to be isometrically isomorphic with the Banach algebra $\ell_1(\mathbb{Z})$ understood
as the convolution algebra of $\mathbb{Z}$. 

Since the latter has only one $C^*$-completion, that is $C(\mathbb{T})$, we finally see
that the fixed-point subalgebra must be $*$-isomorphic with $C(\mathbb{T})$.
\end{proof}
Let $\mathcal{P}(\mathbb{T})\subset C(\bt)^*$ be the convex, $*$-weakly compact set of the regular Borel probability measures on the unit circle.\footnote{By the Riesz-Markov Theorem, we are identifying the set of finite signed Borel measures on $\bt$ with the dual of $C(\bt)$.}
Denote by 
$\r:C(X_o\times\mathbb{T})^{\Phi_{\theta, f}}\rightarrow C(\mathbb{T})$ the $C^*$-isomorphism provided by Proposition \ref{fixpointchar}.
\begin{prop}
Let $(X_o,\th)$ be a uniquely ergodic classical $C^*$-dynamical system, and suppose that the process on the torus $\Phi_{\theta, f}$ is uniquely ergodic w.r.t. the fixed-point algebra with $C(X_o\times\mathbb{T})^{\Phi_{\theta, f}}\supsetneq\bc I$. 

Then the convex, $*$-weakly compact set of the $\Phi_{\theta, f}$-invariant regular Borel probability measures is 
in bijection with $\mathcal{P}(\mathbb{T})$ through the affine map $T$ given by
$$
T(\mu):=\r^{\rm t}(\mu)\circ E^{\Phi_{\theta, f}}\,,\mu\in\mathcal{P}(\mathbb{T})\,,
$$
$E^{\Phi_{\theta, f}}$ being the unique $\Phi_{\theta, f}$-invariant conditional expectation of $C(X_o\times\mathbb{T})$ onto $C(X_o\times\mathbb{T})^{\Phi_{\theta, f}}$.\footnote{ Such a conditional expectation exists by \cite{AD}, Theorem 3.1.}
\end{prop}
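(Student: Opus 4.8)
The plan is to show that $T$ is a well-defined, affine, $*$-weakly continuous injection whose range is exactly the set of $\Phi_{\theta,f}$-invariant regular Borel probability measures; since domain and range are both $*$-weakly compact and Hausdorff, this will automatically upgrade to an affine homeomorphism, in particular a bijection.

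First I would check that $T$ is well defined and lands in the right place. For $\mu\in\mathcal{P}(\mathbb{T})$, the functional $\r^{\rm t}(\mu)=\mu\circ\r$ is a state on $C(X_o\times\mathbb{T})^{\Phi_{\theta,f}}$ because $\r$ is a unital $*$-isomorphism onto $C(\mathbb{T})$ (Proposition \ref{fixpointchar}) and $\mu$ is a state on $C(\mathbb{T})$; composing with the unital completely positive map $E^{\Phi_{\theta,f}}$ produces a state on $C(X_o\times\mathbb{T})$, i.e. a regular Borel probability measure $T(\mu)$. Invariance is then immediate: since $E^{\Phi_{\theta,f}}\circ\Phi_{\theta,f}=E^{\Phi_{\theta,f}}$ (Proposition \ref{efwpa}), one has $T(\mu)\circ\Phi_{\theta,f}=\r^{\rm t}(\mu)\circ E^{\Phi_{\theta,f}}\circ\Phi_{\theta,f}=T(\mu)$, so $T(\mu)$ is $\Phi_{\theta,f}$-invariant. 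Affinity (indeed linearity) of $T$ is clear because it is the composition of the transpose of $\r$ with the transpose of $E^{\Phi_{\theta,f}}$, and for the same reason $T$ is continuous for the $*$-weak topologies.

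Next I would dispose of injectivity: if $T(\mu)=T(\mu')$, then restricting both functionals to $C(X_o\times\mathbb{T})^{\Phi_{\theta,f}}$ and using that $E^{\Phi_{\theta,f}}$ acts as the identity there gives $\mu\circ\r=\mu'\circ\r$ on the fixed-point subalgebra; surjectivity of $\r$ forces $\mu=\mu'$. The core of the argument is surjectivity, and this is where the hypothesis of unique ergodicity w.r.t. the fixed-point subalgebra enters. Given a $\Phi_{\theta,f}$-invariant regular Borel probability measure $\nu$ with associated state $\f_\nu$, set $\psi:=\f_\nu\lceil_{C(X_o\times\mathbb{T})^{\Phi_{\theta,f}}}$. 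On the one hand $\psi\circ E^{\Phi_{\theta,f}}$ is a $\Phi_{\theta,f}$-invariant state extending $\psi$ (invariance as above; extension because $E^{\Phi_{\theta,f}}$ is a conditional expectation onto the fixed-point subalgebra), and on the other hand $\f_\nu$ is itself such an extension. By unique ergodicity w.r.t. the fixed-point subalgebra (condition (ii) of Definition \ref{abdy}), the $\Phi_{\theta,f}$-invariant state extension of $\psi$ is unique, so $\f_\nu=\psi\circ E^{\Phi_{\theta,f}}$; taking $\mu:=\psi\circ\r^{-1}\in\mathcal{P}(\mathbb{T})$ yields $\r^{\rm t}(\mu)=\psi$ and hence $T(\mu)=\psi\circ E^{\Phi_{\theta,f}}=\f_\nu$, i.e. $T(\mu)=\nu$.

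Putting these three steps together, $T$ is an affine, $*$-weakly continuous bijection of the compact convex set $\mathcal{P}(\mathbb{T})$ onto the compact convex set of $\Phi_{\theta,f}$-invariant regular Borel probability measures, hence an affine homeomorphism. The only genuinely non-formal ingredient is the surjectivity step, and within it the identification of the unique $\Phi_{\theta,f}$-invariant extension of a state on the fixed-point subalgebra with its composition with $E^{\Phi_{\theta,f}}$; I expect this --- resting on Definition \ref{abdy}(ii) together with the existence and $\Phi_{\theta,f}$-invariance of $E^{\Phi_{\theta,f}}$ (Proposition \ref{efwpa} and \cite{AD}, Theorem 3.1) --- to be the main point, with everything else a routine unwinding of definitions.
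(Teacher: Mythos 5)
Your proof is correct and follows essentially the same route as the paper: injectivity is routine, and surjectivity is obtained by restricting the state of an invariant measure to the fixed-point subalgebra and invoking the uniqueness of invariant state extensions (Definition \ref{abdy}(ii)) to identify it with its composition with $E^{\Phi_{\theta,f}}$. The extra bookkeeping on well-definedness, affinity and $*$-weak continuity is fine but not needed beyond what the paper records.
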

\begin{proof}
To begin with, the map $T$ is of course injective. Furthermore, it is also surjective. Indeed, let 
$\sigma$ be any $\Phi_{\theta, f}$- invariant measure on $X_o\times\mathbb{T}$. 
Let us denote by $\varphi$ the corresponding state, that is $\varphi(h):=\int_{X_o\times \mathbb{T}} h\textrm{d}\sigma$,
$h\in C(X_o\times\mathbb{T})$.

We note that, with $\widetilde\f:=\f\lceil_{C(X_o\times\mathbb{T})^{\Phi_{\theta, f}}}$, $\widetilde\f\circ E^{\Phi_{\theta, f}}$
is a $\Phi_{\theta, f}$-invariant extension of $\tilde{\varphi}$ to the whole $C^*$-algebra $C(X_o\times\mathbb{T})$.

Since $(X_o\times \mathbb{T}, \Phi_{\theta, f})$ is uniquely ergodic w.r.t. the fixed-point subalgebra, we must have
$\varphi=\widetilde\f\circ E^{\Phi_{\theta, f}}$, and the proof follows.
\end {proof}
By using \eqref{geu}, we easily get for $F\in C(X_o\times\bt)$ and $\m\in\cp(\bt)$,
$$
T(\m)(F)=\sum_{l\in\bz}\widecheck{\m}(l)\int_{X_o}\di\m_o(x)\overline{g_{n_ol}(x)}\int_\bt\frac{\di z}{2\pi\imath z}F(x,z)z^{-n_ol}\,,
$$
where $\widecheck{\m}(l)=\int_\bt z^l\di\m(z)$ is the characteristic function of the measure $\m$, and the sum is meant in the sense of Ces\`aro.

\section{Examples}
\label{ses}

We intend to provide some illustrative, perhaps simple, examples of commutative and noncommutative skew-products.

\begin{examp}
\label{exax}
The double rotation on the two-dimensional torus $R_{\theta, \varphi}\in\textrm{Homeo}(\mathbb{T}\times\mathbb{T})$
given by 
\begin{equation}
\label{drt}
R_{\theta, \varphi}(z, w):= (e^{\imath\theta}z, e^{\imath\varphi}w)\,,\quad (z, w)\in\mathbb{T}\times\mathbb{T}\,,
\end{equation}
with
$\frac{\theta}{2\pi}$ irrational and $e^{\imath\varphi}$ a root of unity, provides the simplest example
of an Anzai skew-product that is uniquely ergodic w.r.t the fixed-point subalgebra while not being uniquely ergodic.

Indeed, if $\varphi=\frac{2\pi}{l}$,  for some positive natural number $l$, then the associated cohomological equation \eqref{sifo} at the level $n$ is
$g(e^{\imath\theta}z)e^{\frac{2\pi\imath n}{l}}=g(z)$, {\it a.e.} w.r.t. the Lebesgue-Haar measure of $\mathbb{T}$.

Let $U\in\cu(L^2(\bt,\l))$ be the Koopman operator associated to the rotation of the angle $\th$. It is well-known that $\s_{\rm pp}(U)=\{e^{\imath m\th}\mid m\in\bz\}$.\footnote{For a bounded operator $A$ acting on a Hilbert space, $\s_{\rm pp}(A)$ is denoting its pure-point spectrum.}
Since $\frac{\theta}{2\pi}$ is irrational and $e^{-\frac{2\pi\imath n}{l}}$ should belong to $\s_{\rm pp}(U)$ if the cohomological equation would admit a nontrivial solution, it can happen if and only if $e^{\frac{2\pi\imath n}{l}}=1$, providing the \lq\lq continuous" (according to Definition \ref{dcmn}) solutions which are the constants. 

Therefore, the fixed-point algebra is nontrivial ({\it cf.} Proposition \ref{toerp}), and ({\it cf.} Theorem \ref{classicalue})  
$(\bt\times\bt, R_{\theta, \varphi})$ is uniquely ergodic w.r.t. the fixed-point subalgebra.\footnote{Another example of a uniquely ergodic w.r.t. the fixed-point subalgebra $C^*$-dynamical system which is not uniquely ergodic arises from Quantum Probability, see \cite{CFL} Section 7, see also \cite{F5}, Section 5.2.}
\end{examp}

\begin{examp}
The situation described in Remark \ref{mov} occurs for instance in the following classical case.

Indeed, we consider the Anzai skew-product on $\bt\times\bt$ generated by $R_{\theta, -\th}\in\textrm{Homeo}(\mathbb{T}\times\mathbb{T})$ given in \eqref{drt}, with $\frac{\theta}{2\pi}$ being irrational. The corresponding cohomological equation for $n=1$ is 
$g(e^{\imath\theta}z)e^{-\imath\theta}=g(z)$, $z\in\mathbb{Z}$, and the function  
$g(z)=z$ is seen at once to be a continuous solution of it. Therefore, this dynamical system is uniquely ergodic w.r.t.
the fixed-point subalgebra. Interestingly, this is true for the quantum analogue of this system as well.

More precisely, denoting by $R_\th:z\mapsto e^{\imath\theta}z$ the rotation for the angle $\th\in[0,2\pi)$, we can consider the quantum version $\F_{R_\th,  e^{-\imath\theta}I}$ of the above classical flow on the noncommutative torus $\mathbb{A}_\alpha$. 

Since for $n=1$ the cohomological equation is just the same as the classical equation above, as pointed out in \cite{DFGR}, the $C^*$-dynamical system 
$\big(\mathbb{A}_\alpha,\F_{R_\th,  e^{-\imath\theta}I}\big)$ is uniquely ergodic w.r.t. the fixed-point subalgebra as well.
\end{examp}

\smallskip

In the remaining part, for the skew-product $\F_{\th,f}$ we use the notations borrowed from \cite{Fu, DFGR}, respectively, see Footnote 10. 

\begin{examp}
\label{ex0}
We deal with $X_o=\bz_\infty$, the one-point compactification of the integers $\bz$, together with the homeomorphism 
$\th:\bz_\infty\rightarrow\bz_\infty$ given by
$$
\th(l):=\left\{\begin{array}{ll}
                     l+1 & \!\!\text{if}\,\, l\in \bz\,, \\
                      \infty & \!\!\text{if}\,\, l=\infty\,.
                    \end{array}
                    \right.
$$
The dynamical system $(\bz_\infty,\th)$ is uniquely ergodic (but neither minimal, nor strictly ergodic) with the unique invariant measure
$$
\m_o(f):=f(\infty)\,,\quad f\in C(\bz_\infty)\,.
$$
We also note that 
$$
C(\bz_\infty)^\th:=\{f\in C(\bz_\infty)\mid f\circ\th=f\}=\bc 1\sim L^\infty(\bz_\infty,\m_o)\,,
$$
with $1$ being the function identically equal to one.

For $f\in C(\bz_\infty;\bt)$, we can associate the process on the torus $\F_{\th,f}:\bz_\infty\times\bt\rightarrow\bz_\infty\times\bt$ given by $(\F_{\th,f})(l,w)=(\th(l), f(l)w)$. We now particularise the situation for 
$$
f(l):=\left\{\begin{array}{ll}
                    \b & \!\!\text{if}\,\, l=0\,, \\
                      1 & \!\!\text{otherwise}\,,
                    \end{array}
                    \right.
$$
for some $\b\in\bt$.

It is immediate to show that for each $n\in\bz$, 
$$
\text{const}f(l)^n=\text{const}\,,\,\,\mu_o\,\text{-}\,\textrm{a.e.}\,,
$$
and therefore the \eqref{sifo} admit nontrivial solutions for each $n\in\bz$.

On the other hand, if the two-sided sequence $(g(l))_{l\in\bz}$ satisfies \eqref{sifo1} for some $n$, then
$$
g(l):=\left\{\begin{array}{ll}
                    g(0)\b^{-n} & \!\!\text{if}\,\, l>0\,, \\
                     g(0) & \!\!\text{if}\,\, l\leq0\,,
                    \end{array}
                    \right.
$$
Imposing continuity, we are led to
$$
g(0)=\lim_{l\to-\infty}g(l)=\lim_{l\to+\infty}g(l)=\b^{-n}g(0)\,.
$$

If $\b=e^{2\pi\imath\a}$ for some irrational  number $\a$, then the unique continuous solution of \eqref{sifo1} for some $n\neq0$ is accordingly the null sequence $g(l)=0$. Therefore, by Theorem \ref{mier}, 
$(X_o\times\bt, \F_{\th,f})$ provides a simple example of a (classical) $C^*$-dynamical system which is topologically ergodic but not uniquely ergodic.

If $\b=-1$, \eqref{sifo1} admits nontrivial solutions, provided $n$ is even, and therefore
we can exhibit a $C^*$-dynamical system which is neither topologically ergodic ({\it i.e.} $C(X_o\times\bt)^{\F_{\th,f}}\supsetneq \bc1$), nor uniquely ergodic w.r.t. the fixed-point subalgebra, see Theorem \ref{classicalue}. 

The above assertions can however be explicitly checked as follows.
Indeed, in both the above examples for which $f(l)=1$, $l\neq0$, we easily compute for the function $h(l,z)=z$, $l\in\bz,z\in\bt$,
$$
\lim_{n\to+\infty}\Big(\frac1{n}\sum_{k=0}^{n-1}h(\F_{\th,f}^k(l,z))\Big)=\left\{\begin{array}{ll}
                    z & \!\!\text{if}\,\, l>0\,, \\
                      f(0)z & \!\!\text{if}\,\, l\leq0\,,
                    \end{array}
                    \right.
$$
and therefore the Ces\`{a}ro averages of $h(l,z)$ do not converge to a function in $C(\bz_\infty,\bt)$ if $f(0)\neq1$.
\end{examp}

\begin{examp}

We provide as elementary an example as possible of a uniquely ergodic skew-product, where $\mathfrak{A}$ is nonetheless allowed to be 
highly non-commutative ({\it i.e.} simple). 

To this aim, we choose $\mathfrak{A}:=\mathbb{A_\gamma}$, the noncommutative 2-torus generated by two unitary indeterminate $U,V$ subjected to the relation
$UV=e^{2\pi\imath\g}VU$, where the deformation parameter $\gamma$
is assumed irrational. It is known that $\ba_\g$ is simple with a unique tracial state $\t$.

Corresponding to this choice of the initial $C^*$-algebra, we consider the
automorphism $\theta\in{\rm Aut}(\mathbb{A}_\gamma)$ whose action on the generators $U$ and $V$ is given by
$$
\theta(U)=e^{i\theta}U\quad{\rm and}\quad \theta(V)=UV\,.
$$
where $\theta\in\mathbb{R}$ is such that $\frac{\theta}{2\pi}$ is irrational.
In other words, $\theta$ is nothing but the quantum Anzai skew-product
$\Phi_{\theta, f}$, with $f(z)=z$, which is known to be uniquely ergodic with the canonical trace $\tau$ its unique
invariant state, see \cite{DFGR}, Proposition 4.4.

We then consider  a second automorphism $\alpha\in{\rm Aut}(\mathbb{A}_\gamma)$ given by
$$
\alpha(U)=U\quad{\rm and}\quad \alpha(V)=e^{i\alpha}V\,.
$$

Phrased differently, the automorphism $\alpha$ is nothing but a single rotation of an angle, denoted with an abuse of notation again by $\alpha$, w.r.t. the second generator.

We set $\mathfrak{B}:=\ba_\g \rtimes_{\alpha}\mathbb{Z}$, and denote by $W\in\mathfrak{B}$ any of the unitaries implementing $\alpha$, {\it i.e.} $WaW^*=\alpha(a)$, for any $a\in\mathbb{A}_\gamma$.

Since $\theta$ and $\alpha$ commute with each other and the centre of $\mathbb{A}_\gamma$ is trivial, the only possible choice for the intertwining unitary $u$ in \eqref{inter} is a scalar, say $u=\beta I$, with $\beta\in\mathbb{T}$. Using the notations in Section \ref{anspr}, the corresponding automorphism $\F_{\th,u}\in\aut(\ga\rtimes_\a\bz)$ will be simply denoted by $\F$.

Therefore, $\F$ will act on powers of $W$ as $\Phi(W^n)=\beta^n W^n$, which means
the cocycle $u_n$ is simply $\beta^n I$. The corresponding cohomological equations \eqref{compi}, $n\in\bz$, then read as 
\begin{equation}
\label{sifo2}
\beta^n\ad{}\!_{V_{\th,\t}}(x)=x\,,
\end{equation}
where the unknown $x\in\cam:=\pi_\tau(\mathbb{A}_\gamma)''$.
What we now want to do is show that, if $\beta$ is carefully chosen, then the above equations only have
trivial solutions for any $n\neq 0$.

To this end, rewrite any such an element as
$$
x=\sum_{k\in\bz}c_k(\pi_\t(U))\pi_\t(V)^k\,,
$$
where ({\it cf.}  \eqref{cesexp1}) $(c_k)_{k\in\bz}\subset L^\infty(\bt,\l)$, and the convergence is meant in the strong operator topology of $\cam$ in the sense of
Ces\`aro, see Proposition \ref{ftrf}.

With
$$
\d_k:=\left\{\begin{array}{ll}
                    e^{-\pi\imath k(k-1)} & \!\!\text{if}\,\, k>0\,, \\
                     1 & \!\!\text{if}\,\, k=0\,,\\
                      e^{-\pi\imath|k|(|k|+1)} & \!\!\text{if}\,\, k<0\,, 
                    \end{array}
                    \right.
$$
for fixed $n\in\bz$, the cohomological equation \eqref{sifo2} leads to
\begin{equation*}
\label{sifo3}
\b^n\d_kc_k(e^{\imath\th}z)z^k=c_k(z)\,,\,\,\l\,\text{-}\,\textrm{a.e.}\,, 
\end{equation*}
which should be satisfied for each $k\in\bz$.

We first fix $n=0$, and deduce by the Bessel inequality that the unique solution, up to a constant, of \eqref{sifo2} is $x=1$ ({\it i.e.} $c_k(z)=\text{const}\times\d_{k,0}$). This is in accordance with the fact that $(\ba_\g,\F_{\th,f})$ is uniquely ergodic, with the canonical trace as the unique invariant state.

Reasoning as before, for $n,k\neq0$ we can conclude that \eqref{sifo3} has only the null function as the solution, and therefore it remains open the case $n\neq0$, $k=0$.

By considering the Koopman operator $U$ associated to the rotation of the angle $\th$, and reasoning as in Example \ref{exax}, the eigenvalue equation 
will fail to have nontrivial solutions if and only if the sets $\{\beta^n\mid n\in\mathbb{Z}\}$ 
and $\{e^{\imath m\theta}\mid m\in\mathbb{Z}\}$ do not intersect each other. Writing $\beta=e^{\imath\varphi}$, we see this is the case if
and only if for any $m,n\in\mathbb{Z}$ the combination $m\frac{\theta}{2\pi}+n\frac{\varphi}{2\pi}$ is never integer, or in other words,
if $\frac{\theta}{2\pi}$ and $\frac{\varphi}{2\pi}$ are rationally independent.\footnote{A finite set $\{m_1,\dots,m_n\}\subset\br$, $n>1$, is said to be {\it rationally independent} if for any linear combination of the form
$\sum_{l=1}^nk_lm_l=0$, with $k_l$ integers, implies $k_1=k_2=\cdots=k_n=0$.} 

\end{examp}

\section*{Acknowledgements}
S.D.V. is supported by the Deutsche Forschungsgemeinschaft (DFG) within the Emmy Noether
grant CA1850/1-1. F.F.  acknowledges the \lq\lq MIUR Excellence Department Project'' awarded to the Department of Mathematics, University of Rome Tor Vergata, CUP E83C18000100006.

 We would also like to thank the anonymous referee(s), whose careful reading resulted in an improvement of the presentation of the paper.

\end{document}